\newtheorem{lem}{Lemma}[section]
\newtheorem{prop}[lem]{Proposition}
\newtheorem{cor}[lem]{Corollary}
\newtheorem{thm}[lem]{Theorem}
\theoremstyle{remark}
\newtheorem{rem}[lem]{Remark}
\theoremstyle{definition}
\newtheorem{exm}[lem]{Example}
\numberwithin{equation}{section}
\newcommand{\smatrix}[1]{\left[\begin{smallmatrix}#1\end{smallmatrix}\right]}
\renewcommand{\mod}{\operatorname{\mathsf{mod}}\nolimits}
\DeclareMathOperator*{\colim}{colim}
\newcommand{\add}{\operatorname{\mathsf{add}}\nolimits}
\newcommand{\Id}{\operatorname{Id}\nolimits}
\newcommand{\Mod}{\operatorname{\mathsf{Mod}}\nolimits}
\newcommand{\rep}{\operatorname{\mathsf{rep}}\nolimits}
\newcommand{\Rep}{\operatorname{\mathsf{Rep}}\nolimits}
\newcommand{\End}{\operatorname{End}\nolimits}
\newcommand{\Fun}{\operatorname{Fun}\nolimits}
\newcommand{\Hom}{\operatorname{Hom}\nolimits}
\newcommand{\Pol}{\operatorname{Pol}\nolimits}
\newcommand{\RHom}{\operatorname{\mathbf{R}Hom}\nolimits}
\newcommand{\HOM}{\operatorname{\mathcal H \;\!\!\mathit o \mathit m}\nolimits}
\newcommand{\RHOM}{\operatorname{\mathbf{R}\mathcal H \;\!\!\mathit o \mathit m}\nolimits}
\newcommand{\Ext}{\operatorname{Ext}\nolimits}
\newcommand{\sgn}{\operatorname{sgn}\nolimits}
\newcommand{\op}{\mathrm{op}}
\newcommand{\can}{\mathrm{can}}
\newcommand{\comp}{\mathop{\circ}}
\newcommand{\lto}{\longrightarrow}
\newcommand{\xto}{\xrightarrow}
\newcommand{\lotimes}{\otimes^{\mathbf L}}
\def\a{\alpha}
\def\b{\beta}
\def\e{\varepsilon}
\def\g{\gamma}
\def\p{\phi}
\def\s{\sigma}
\def\om{\omega}
\def\la{\lambda}
\def\De{\Delta}
\def\Ga{\Gamma}
\def\La{\Lambda}
\def\C{{\mathsf C}}
\def\D{{\mathsf D}}
\def\K{{\mathsf K}}
\def\M{{\mathsf M}}
\def\P{{\mathsf P}}
\def\bbF{{\mathbb F}}
\def\bbZ{{\mathbb Z}}
\def\frm{{\mathfrak m}}
\def\frp{{\mathfrak p}}
\def\frS{{\mathfrak S}}
\def\sa{\smatrix{\a}}
\def\so{\smatrix{\om}}
\def\sao{\smatrix{\a\\ \om}}
\def\soa{\smatrix{\om\\ \a}}
\def\soao{\smatrix{\om\\ \a\\ \om}}
\begin{document}
\title{Koszul, Ringel, and Serre duality\\ for strict polynomial
  functors}
\dedication{Dedicated to Ragnar-Olaf Buchweitz on the occasion of his
  60th birthday.}
\classification{20G05 (primary), 18D10, 18E30, 18G10, 20G10, 20G43 (secondary).}
\keywords{Strict polynomial functor, polynomial representation, Schur
  algebra, divided power, Koszul duality, Ringel duality, Serre duality.}

\author{Henning Krause}
\email{hkrause@math.uni-bielefeld.de}
\address{Fakult\"at f\"ur Mathematik\\
Universit\"at Bielefeld\\ D-33501 Bielefeld\\ Germany.}

\begin{abstract}
  This is a report on recent work of Cha{\l}upnik and Touz\'e.  We
  explain the Koszul duality for the category of strict polynomial
  functors and make explicit the underlying monoidal structure which
  seems to be of independent interest. Then we connect this to Ringel
  duality for Schur algebras and describe Serre duality  for strict
  polynomial functors.
\end{abstract}

\maketitle
\tableofcontents 

\section{Introduction}

A Koszul duality for strict polynomial functors has been introduced in recent
work of Cha{\l}upnik \cite{Ch2008} and Touz\'e \cite{To2011}. In this
note we give a detailed report. Our aim is:
\begin{itemize}
\item to make explicit the underlying monoidal structure for the
  category of strict polynomial functors,
\item to explain the connection with Ringel duality for Schur
  algebras,
\item to describe Serre duality for strict polynomial functors in
  terms of Koszul duality, and
\item to remove assumptions on the ring of coefficients.
\end{itemize}

The category of strict polynomial functors was introduced by
Friedlander and Suslin \cite{FS1997} in their work on the cohomology
of finite group schemes. We use an equivalent description in terms of
representations of divided powers, following expositions of Kuhn
\cite{Ku1998}, Pirashvili \cite{Pi2003}, and Bousfield \cite{Bo1967}.

The construction of the Koszul duality can be summarised as follows.
\begin{thm}\label{th:intro}
\pushQED{\qed} 
  Let $k$ be a commutative ring and $d$ a non-negative integer.
  Denote by $\Rep\Ga^d_k$ the category of $k$-linear representations
  of the category of divided powers of degree $d$ over $k$.
\begin{enumerate}
\item The category $\Rep\Ga^d_k$ has a tensor product
\[-\otimes_{\Ga^d_k}-\colon\Rep\Ga^d_k\times\Rep\Ga^d_k\lto\Rep\Ga^d_k\]
which is induced by the tensor product for the category of divided
powers.
\item The left derived tensor functor given by the exterior power
  $\La^d$ of degree $d$ induces for the derived category of
  $\Rep\Ga^d_k$ an equivalence
  \[\La^d\lotimes_{\Ga^d_k}-\colon\D(\Rep\Ga^d_k)
  \stackrel{\sim}\lto\D(\Rep\Ga^d_k).\qedhere\]
\end{enumerate}
\end{thm}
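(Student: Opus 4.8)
The plan is to build the monoidal structure on $\Rep\Ga^d_k$ directly from the monoidal structure on the category $\Ga^d_k$ of divided powers, and then to identify the derived functor $\La^d \lotimes_{\Ga^d_k} -$ with the classical Koszul duality equivalence. For part (1), I recall that $\Ga^d_k$ carries a symmetric monoidal structure whose tensor product on objects is $V \otimes_k W$ (the divided power $\Ga^d$ being a lax monoidal functor in the appropriate sense). The tensor product of representations $M, N \in \Rep\Ga^d_k$ is then defined by the usual coend formula
\[
(M \otimes_{\Ga^d_k} N)(V) = \int^{U,W} M(U) \otimes_k N(W) \otimes_k \Ga^d_k\bigl(U \otimes_k W, V\bigr),
\]
equivalently as the left Kan extension along $\otimes_k \colon \Ga^d_k \times \Ga^d_k \to \Ga^d_k$ of the external product $M \boxtimes N$. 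This is right exact in each variable and cocontinuous; unitality, associativity and symmetry are inherited from the corresponding coherence data on $\Ga^d_k$ by the universal property of coends. The representable functors $\Ga^{d,V} = \Ga^d_k(V, -)$ are the "free" objects and satisfy $\Ga^{d,V} \otimes_{\Ga^d_k} \Ga^{d,W} \cong \Ga^{d, V \otimes_k W}$, which pins down the tensor product on projectives.

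For part (2), the key is that $\La^d \in \Rep\Ga^d_k$ (the degree-$d$ exterior power, viewed as a strict polynomial functor hence as a $\Ga^d_k$-representation) has a finite resolution by direct sums of representable functors — the Koszul-type resolution — so $\La^d \lotimes_{\Ga^d_k} -$ is computed by replacing $\La^d$ with this complex of projectives. I would first show the functor is well-defined on $\D(\Rep\Ga^d_k)$ and cocontinuous, then exhibit a quasi-inverse. The natural candidate is $\La^d \lotimes_{\Ga^d_k} -$ again, up to a shift and sign, reflecting the self-duality $(\La^d)^\vee \simeq \La^d$ of the Koszul complex; concretely one checks $\La^d \lotimes_{\Ga^d_k} \La^d \simeq \Ga^{d,k} = \Ga^d_k(k,-)$, the monoidal unit, in the derived category. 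Because both $\La^d \lotimes_{\Ga^d_k} -$ and its candidate inverse are exact triangulated functors commuting with coproducts, and $\D(\Rep\Ga^d_k)$ is compactly generated by the representables $\Ga^{d,V}$, it suffices to verify that the composite is the identity on each $\Ga^{d,V}$, which reduces by the formula above to the single computation $\La^d \lotimes_{\Ga^d_k} \La^d \simeq \Ga^{d,k}$.

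The main obstacle is this last computation over an \emph{arbitrary} commutative ring $k$: the Koszul complex relating $\Ga^*$, $\La^*$ and $\mathrm{Sym}^*$ is exact in each homogeneous degree over $\bbZ$ (this is the acyclicity of the Koszul complex, or Cartier's theorem on the Koszul duality between the divided power and exterior algebras), and one must be careful that the relevant $\Tor$-groups vanish without flatness hypotheses — the point of "removing assumptions on the ring of coefficients". I would handle this by base change from $\bbZ$: establish the quasi-isomorphism integrally, where the complex of representable functors resolving $\La^d$ is a bounded complex of projective $\bbZ$-modules in each evaluation, and then tensor up to $k$, using that a bounded complex of $k$-flat (indeed projective) objects computes the derived tensor product. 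The secondary technical point is checking the coherence of the monoidal structure and the compatibility of $\La^d \lotimes -$ with the tensor product well enough to even state "equivalence"; this is bookkeeping with coends and the functoriality of Kan extensions, and I expect it to be routine but lengthy.
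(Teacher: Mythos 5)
Your part (1) is essentially the paper's construction (Day convolution, i.e.\ left Kan extension along the tensor product of the divided power category, pinned down by $\Ga^{d,V}\otimes_{\Ga^d_k}\Ga^{d,W}\cong\Ga^{d,V\otimes W}$), so no issues there. The gap is the pivot of your part (2): the identity you propose to verify, $\La^d\lotimes_{\Ga^d_k}\La^d\simeq\Ga^{d,k}$ (the monoidal unit), is false, even up to shift and sign, outside degenerate cases such as characteristic zero. The correct computation, and the crux of the whole argument, is $\La^d\lotimes_{\Ga^d_k}\La^d\cong S^d$, the symmetric power: one tensors the Koszul-type resolution of $\La^d$ by sums of $\Ga^\la$'s against $\La^d$ and uses the compatibility of internal and external tensor products ($\La^d\otimes_{\Ga^d_k}\Ga^\la\cong\La^\la$) to identify the result with the analogous resolution of $S^d$. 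For $k=\bbF_2$ and $d=2$ the paper's explicit table shows $\La^2\lotimes_{\Ga^2_k}\La^2\cong S^2$, which is neither $\Ga^2$ nor any shift of it. Consequently $\La^d\lotimes_{\Ga^d_k}-$ is not its own quasi-inverse; in fact its square is the Serre functor $S^d\lotimes_{\Ga^d_k}-$ when $k$ is a field (Corollary~\ref{co:serre}), which is not the identity in positive characteristic. The Kuhn self-duality $(\La^d)^\circ\cong\La^d$ does not rescue your claim; what it gives is $\RHOM_{\Ga^d_k}(\La^d,\La^d)\cong(\La^d\lotimes_{\Ga^d_k}\La^d)^\diamond\cong(S^d)^\diamond\cong\Ga^d$, and this is how the equivalence is actually proved: the quasi-inverse is $\RHOM_{\Ga^d_k}(\La^d,-)$, and the Hom-tensor identities for the perfect object $\La^d$ then show that the two composites are isomorphic to the identity.

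Two smaller points. First, once the correct candidate inverse is in place, the reduction to generators should be phrased as checking that the unit and counit of the adjunction $(\La^d\lotimes_{\Ga^d_k}-,\;\RHOM_{\Ga^d_k}(\La^d,-))$ are invertible; with your self-inverse candidate there is no natural transformation to the identity available, so "the composite is the identity on each $\Ga^{d,V}$" is not by itself a well-posed verification. Second, the base change from $\bbZ$ is unnecessary: the resolutions of $\La^d$ and $S^d$ by direct sums of $\Ga^\la$'s come from the normalised bar resolutions over $S(V^*)$ and $\La(V^*)$ and exist naturally over an arbitrary commutative ring $k$, with all terms having finitely generated projective values; this is precisely how the hypotheses on the coefficient ring are removed.
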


The crucial observation for proving the second part of this theorem is
that classical Koszul duality provides resolutions of the exterior
power $\La^d$ and the symmetric power $S^d$ which give an isomorphism
\begin{equation}\label{eq:square}
\La^d\lotimes_{\Ga^d_k}\La^d\cong S^d.
\end{equation}

Strict polynomial functors are closely related to modules over Schur
algebras. In fact, $\Rep\Ga^d_k$ is equivalent to the category of
modules over the Schur algebra $S_k(n,d)$ for $n\ge d$, by a result of
Friedlander and Suslin \cite{FS1997}. Schur algebras were introduced
by Green \cite{Gr1980} and provide a link between representations of
the general linear groups and the symmetric
groups. Theorem~\ref{th:intro} establishes via transport of structure
a tensor product for the category of modules over the Schur algebra
$S_k(n,d)$; it seems that this tensor product has not been noticed
before.

A Koszul type duality for modules over Schur algebras seems to appear
first in work of Akin and Buchsbaum \cite{AB1988}. For instance, they
construct resolutions of Schur and Weyl modules, and it is Koszul
duality which maps one to the other; see Proposition~\ref{pr:schur}. A
few years later, Ringel \cite{Ri1991} introduced characteristic
tilting modules for quasi-hereditary algebras. These tilting modules
give rise to derived equivalences, relating a quasi-hereditary algebra
and its Ringel dual.  Donkin \cite{Do1993} described those tilting
modules for Schur algebras, and it turns out that Koszul duality as
introduced by Cha{\l}upnik \cite{Ch2008} and Touz\'e \cite{To2011} is
equivalent to Ringel duality for modules over Schur algebras; this is
the content of Theorem~\ref{th:Ringel}.

The Koszul duality $\La^d\lotimes_{\Ga^d_k}-$ is an endofunctor and it
would be interesting to have a description of its powers. It is
somewhat surprising that its square is a Serre duality functor in the
sense of \cite{RV2002}; this is another consequence of the isomorphism
\eqref{eq:square} and the content of Corollary~\ref{co:serre}. 

\subsection*{Acknowledgements}

My interest in the subject of these notes originates from lectures of
Greg Stevenson (at a summer school in Bad Driburg in August 2011) and
of Antoine Touz\'e (at a conference in Strasbourg in November 2011). I
am grateful to both of them for making me aware of the beautiful work
of Cha{\l}upnik and Touz\'e. In addition, I would like to thank Dave
Benson, Ivo Dell'Ambrogio, Greg Stevenson, and Dieter Vossieck for
numerous helpful comments on this subject.

\subsection*{Convention}
Throughout this work we fix a commutative ring $k$.

\section{Divided powers and strict polynomial functors}

The category of strict polynomial functors was introduced by
Friedlander and Suslin \cite{FS1997}.  We use an equivalent
description in terms of representations of divided powers, following
expositions of Kuhn \cite{Ku1998}, Pirashvili \cite{Pi2003}, and
Bousfield \cite{Bo1967}.  Note that divided powers have their origin in the
computation of the homology of Eilenberg--MacLane spaces \cite{Ca1954,
  EM1954}. Some classical references on polynomial functors and polynomial
representations are \cite{ABW1982,Gr1980,MD1995}.

\subsection*{Finitely generated projective modules}
Let $\P_k$ denote the category of finitely generated projective
$k$-modules. Given $V,W$ in $\P_k$, we write $V\otimes W$ for their
tensor product over $k$ and $\Hom(V,W)$ for the group of $k$-linear
maps $V\to W$. This provides two bifunctors
\begin{align*}
-\otimes -&\colon\P_k\times\P_k\lto\P_k\\
\Hom(-,-)&\colon(\P_k)^\op\times\P_k\lto\P_k
\end{align*}
with a natural isomorphism
\[\Hom_{\P_k}(U\otimes V,W)\cong\Hom_{\P_k}(U,\Hom(V,W)).\]

The functor sending $V$ to $V^*=\Hom(V,k)$ yields a duality 
\[(\P_k)^\op \stackrel{\sim}\lto\P_k.\] 

Note that for $U,V,W,V',W'$ in $\P_k$  there are natural isomorphisms
\begin{align}
\label{eq:piso1} V^*\otimes W&\cong \Hom(V,W)\\
\label{eq:piso2} \Hom(U,V)\otimes W&\cong\Hom(U,V\otimes W)\\
\label{eq:piso3} \Hom(V,W)\otimes\Hom(V',W')&\cong\Hom(V\otimes V',W\otimes W').
\end{align}

\subsection*{Divided and symmetric powers}

Fix a positive integer $d$ and denote by $\frS_d$ the symmetric group
permuting $d$ elements. For each $V\in\P_k$, the group $\frS_d$ acts
on $V^{\otimes d}$ by permuting the factors of the tensor product.
Denote by $\Ga^dV$ the submodule $(V^{\otimes d})^{\frS_d}$ of
$V^{\otimes d}$ consisting of the elements which are invariant under
the action of $\frS_d$; it is called the module of \emph{divided
  powers of degree $d$}.\footnote{The original definition of the
  module $\Ga^dV$ of divided powers is different; it is, however,
  isomorphic to the module of symmetric tensors which is used here;
  see \cite[IV.5, Exercise~8]{Bo1981}.}  The maximal quotient of
$V^{\otimes d}$ on which $\frS_d$ acts trivially is denoted by $S^dV$
and is called the module of \emph{symmetric powers of degree $d$}.
Set $\Ga^0V=k$ and $S^0V=k$.

From the definition, it follows that $(\Ga^dV)^*\cong S^d (V^*)$.  Note
that $S^d V$ is a free $k$-module provided that $V$ is free; see
\cite[III.6.6]{Bo1970}. Thus $\Ga^d V$ and $S^d V$ belong to $\P_k$ for
all $V\in\P_k$, and we obtain functors $\Ga^d,S^d\colon\P_k\to\P_k$.

For further material on divided and symmetric powers, see \cite{ABW1982, Bo1981, Ro1963}.

\subsection*{The category of divided powers}
We consider the category $\Ga^d\P_k$ which is defined as follows. The
objects are the finitely generated projective $k$-modules and for two
objects $V,W$ set
 \[\Hom_{\Ga^d\P_k}(V,W)=\Ga^d\Hom(V,W).\]
 Note that this identifies with $\Hom(V^{\otimes d},W^{\otimes
   d})^{\frS_d}$ via the isomorphism \eqref{eq:piso3}, where $\frS_d$
 acts on $\Hom(V^{\otimes d},W^{\otimes d})$
 via \[(f\s)(v_1\otimes\cdots\otimes
 v_d)=f(v_{\s(1)}\otimes\cdots\otimes v_{\s(d)}).\] Using this
 identification one defines the composition of morphisms in
 $\Ga^d\P_k$.

\begin{exm}
  Let $n$ be a positive integer and set $V=k^n$. Then
  $\End_{\Ga^d\P_k}(V)$ is isomorphic to the \emph{Schur algebra}
  $S_k(n,d)$ as defined by Green \cite[Theorem~2.6c]{Gr1980}. We view
  this isomorphism as an identification.
\end{exm}

The bifunctors $-\otimes-$ and $\Hom(-,-)$ for $\P_k$ induce
bifunctors 
\begin{align*}
-\otimes -&\colon\Ga^d\P_k\times\Ga^d\P_k\lto\Ga^d\P_k\\
\Hom(-,-)&\colon(\Ga^d\P_k)^\op\times\Ga^d\P_k\lto\Ga^d\P_k
\end{align*}
with a natural isomorphism
\[\Hom_{\Ga^d\P_k}(U\otimes V,W)\cong\Hom_{\Ga^d\P_k}(U,\Hom(V,W)).\]
More precisely, the tensor product for $\Ga^d\P_k$ coincides on
objects with the one for $\P_k$, while on morphisms it is for pairs
$V,V'$ and $W,W'$ of objects the composite
\begin{multline*}
\Ga^d\Hom(V,V')\times\Ga^d\Hom(W,W')\to
\Ga^d(\Hom(V,V')\otimes\Hom(W,W'))\\ \xto{\sim}\Ga^d\Hom(V\otimes
W,V'\otimes W')
\end{multline*}
where the second map is induced by \eqref{eq:piso3}.

The duality for $\P_k$ induces a duality \[(\Ga^d\P_k)^\op\stackrel{\sim}\lto \Ga^d\P_k.\]

\subsection*{Strict polynomial functors}

Let $\M_k$ denote the category of $k$-modules. We study the category
of $k$-linear \emph{representations} of $\Ga^d\P_k$. This is by
definition the category of $k$-linear functors $\Ga^d\P_k\to\M_k$ and
we write by slight abuse of notation
\[\Rep\Ga^d_k=\Fun_k(\Ga^d\P_k,\M_k).\]
The set of morphisms between two representations $X,Y$ in $\Rep\Ga^d_k$
is denoted by $\Hom_{\Ga^d_k}(X,Y)$.

The representations of $\Ga^d\P_k$ form an abelian category, where
(co)kernels and (co)products are computed pointwise over $k$.

A representation $X$ is called \emph{finite} when $X(V)$
is finitely generated projective for each $V\in\Ga^d\P_k$. 
It is sometimes convenient to restrict to the full subcategory
\[\rep\Ga^d_k=\Fun_k(\Ga^d\P_k,\P_k)\] of finite
representations; it is an extension closed subcategory of
$\Rep\Ga^d_k$ and therefore a Quillen exact category \cite{Q}.

\begin{rem}
  A representation $X\in\Rep\Ga^d_k$ is by definition a pair of
  functions, the first of which assigns to each $V\in\P_k$ a $k$-module
  $X(V)$ and the second  assigns to each pair $V,W\in\P_k$ a
  $k$-linear map
  \[X_{V,W}\colon\Ga^d\Hom(V,W)\lto\Hom(X(V),X(W)).\] The map
  $X_{V,W}$ admits an interpretation which is based on classical
  properties of symmetric tensors and divided powers, to be found in
  \cite{Bo1981, Ro1963}.
  Given a pair of $k$-modules $M,N$ with $M\in\P_k$, there is a
  canonical $k$-linear map
\[\Hom(\Ga^d M, N)\lto\Pol^d(M,N)\]
where $\Pol^d(M,N)$ denotes the $k$-module consisting of
\emph{homogeneous polynomial maps of degree $d$} from $M$ to $N$; it
is an isomorphism when $k$ is an infinite integral domain and $N$ is
torsion-free \cite[IV.5.9]{Bo1981}. On the other hand,
there is a canonical bijection
\[\Hom(\Ga^d M, N)\lto P^d(M,N)\] where $P^d(M,N)$ denotes the set of
\emph{homogeneous polynomial laws of degree $d$} of $M$ in $N$, by
\cite[IV.5, Exercise~10]{Bo1981}.  These observations explain the term
\emph{strict polynomial functor} used by Friedlander and Suslin in
\cite[\S2]{FS1997}.
\end{rem}

\subsection*{The Yoneda embedding}
The Yoneda embedding
\[(\Ga^d\P_k)^\op\lto\Rep\Ga^d_k,\quad V\mapsto\Hom_{\Ga^d\P_k}(V,-)\]
identifies $\Ga^d\P_k$ with the full subcategory
consisting of the representable functors. For  $V\in\Ga^d\P_k$ we write
\[\Ga^{d,V}=\Hom_{\Ga^d\P_k}(V,-).\]
For $X\in\Rep\Ga^d_k$ there is the Yoneda isomorphism
\[\Hom_{\Ga^d_k}(\Ga^{d,V},X)\xto{\sim} X(V)\] 
and it follows that $\Ga^{d,V}$ is a projective
object in $\Rep\Ga^d_k$. For  $W\in\Ga^d\P_k$ this yields
\[\Hom_{\Ga^d_k}(\Ga^{d,V},\Ga^{d,W})\cong\Hom_{\Ga^d\P_k}(W,V)=\Ga^d\Hom(W,V).\]
A representation $X$ is \emph{finitely
  generated} if there are objects $V_1,\ldots,V_r \in\Ga^d \P_k$ and an epimorphism
\[\Ga^{d,V_1}\oplus\cdots\oplus\Ga^{d,V_r}\lto X\]

Note that each $X$ in $\Rep\Ga^d_k$ can be written canonically as a
colimit of representable functors
\[\colim_{\Ga^{d,V}\to X} \Ga^{d,V} \xto{\sim}X\]
where the colimit is taken over the category of morphisms
$\Ga^{d,V}\to X$ and $V$ runs through the objects of $\Ga^d\P_k$.

\subsection*{Duality}
Given a  representation $X\in\Rep\Ga^d_k$, its \emph{dual}  $X^\circ$ is defined by
\[X^\circ(V)=X(V^*)^*.\] This is also known as \emph{Kuhn dual}
\cite{Ku1994}; see \cite[Section~2.7]{Gr1980} for its use in representation
theory.

For each pair of $k$-modules $V,W$ there is a natural isomorphism
\[\Hom(V,W^*)\cong\Hom(W,V^*).\]
This induces for all $X,Y\in\Rep\Ga^d_k$ a natural isomorphism
\begin{equation}\label{eq:dual}
\Hom_{\Ga^d_k}(X,Y^\circ)\cong\Hom_{\Ga^d_k}(Y,X^\circ)
\end{equation}
and the evaluation morphism $X\to X^{\circ\circ}$, which is an
isomorphism when $X$ is finite.
\begin{exm} The divided power functor $\Ga^d$ and the symmetric power
  functor $S^d$ belong to $\Rep\Ga^d_k$. In fact
\[\Ga^d=\Hom_{\Ga^d\P_k}(k,-)\quad\text{and}\quad S^d=(\Ga^d)^\circ.\]
\end{exm}

\subsection*{The internal tensor product}

The category of representations of $\Ga^d\P_k$ inherits a tensor
product from the tensor product for $\Ga^d\P_k$.  We provide a
construction in terms of Kan extensions which is also known as
Day convolution \cite{Da1970, IK1986}. The internal Hom
functor appears in work of Touz\'e  \cite[\S2]{To2010}.

\begin{prop}\label{pr:tensor}
The bifunctors $-\otimes-$ and $\Hom(-,-)$  for $\Ga^d\P_k$ induce
via the Yoneda embedding bifunctors 
\begin{align*}
-\otimes_{\Ga^d_k}-&\colon\Rep\Ga^d_k\times\Rep\Ga^d_k\lto\Rep\Ga^d_k\\
\HOM_{\Ga^d_k}(-,-) &\colon
(\Rep\Ga^d_k)^\op\times\Rep\Ga^d_k\lto\Rep\Ga^d_k
\end{align*}
 with  a natural isomorphism
\[\Hom_{\Ga^d_k}(X\otimes_{\Ga^d_k}Y,Z)\cong\Hom_{\Ga^d_k}(X,\HOM_{\Ga^d_k}(Y,Z)).\]
\end{prop}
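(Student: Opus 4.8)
The plan is to construct the tensor product $X\otimes_{\Ga^d_k}Y$ as a left Kan extension and to obtain the internal Hom as its right adjoint, so that the adjunction isomorphism is a formal consequence of the construction. First I would recall that every representation is canonically a colimit of representables (as noted after the Yoneda embedding), so that $\Rep\Ga^d_k$ is the free cocompletion of $(\Ga^d\P_k)^\op$, equivalently the presheaf category on $\Ga^d\P_k$ after identifying $X$ with $V\mapsto X(V)$. On representables I would \emph{define} $\Ga^{d,V}\otimes_{\Ga^d_k}\Ga^{d,W}:=\Ga^{d,V\otimes W}$, using the tensor bifunctor $-\otimes-$ on $\Ga^d\P_k$ from the previous subsection; this is functorial in $V$ and $W$ because the Yoneda embedding is (contravariantly) functorial and $-\otimes-$ is a bifunctor on $\Ga^d\P_k$. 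Then I would extend to all of $\Rep\Ga^d_k$ in each variable separately by taking colimits: writing $X\xto{\sim}\colim_{\Ga^{d,V}\to X}\Ga^{d,V}$ and similarly for $Y$, set
\[
X\otimes_{\Ga^d_k}Y:=\colim_{\Ga^{d,V}\to X}\ \colim_{\Ga^{d,W}\to Y}\ \Ga^{d,V\otimes W}.
\]
Since colimits in $\Rep\Ga^d_k$ are computed pointwise over $k$, this is well defined and yields a $k$-linear bifunctor that preserves colimits in each variable. (Equivalently, one may describe it pointwise by a coend $ (X\otimes_{\Ga^d_k}Y)(U)=\int^{V,W} X(V)\otimes_k Y(W)\otimes_k\Hom_{\Ga^d\P_k}(V\otimes W,U)$, which makes the Day-convolution nature transparent.)

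Next I would produce the internal Hom. For fixed $Y$, the functor $-\otimes_{\Ga^d_k}Y$ preserves all colimits and $\Rep\Ga^d_k$ is a locally presentable (Grothendieck) abelian category, so by the adjoint functor theorem it has a right adjoint; call it $\HOM_{\Ga^d_k}(Y,-)$. Its value can be made explicit via Yoneda:
\[
\HOM_{\Ga^d_k}(Y,Z)(V)\cong\Hom_{\Ga^d_k}\bigl(\Ga^{d,V},\HOM_{\Ga^d_k}(Y,Z)\bigr)\cong\Hom_{\Ga^d_k}\bigl(\Ga^{d,V}\otimes_{\Ga^d_k}Y,\,Z\bigr)\cong\Hom_{\Ga^d_k}\bigl(\Ga^{d,V}\otimes_{\Ga^d_k}Y,Z\bigr),
\]
and functoriality in $Y$ is forced by uniqueness of adjoints, giving the bifunctor $\HOM_{\Ga^d_k}(-,-)\colon(\Rep\Ga^d_k)^\op\times\Rep\Ga^d_k\to\Rep\Ga^d_k$. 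The required natural isomorphism $\Hom_{\Ga^d_k}(X\otimes_{\Ga^d_k}Y,Z)\cong\Hom_{\Ga^d_k}(X,\HOM_{\Ga^d_k}(Y,Z))$ is then exactly the defining adjunction.

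It remains to see that this tensor product genuinely \emph{extends} the bifunctor $-\otimes-$ on $\Ga^d\P_k$ along Yoneda, i.e.\ that $\Ga^{d,V}\otimes_{\Ga^d_k}\Ga^{d,W}\cong\Ga^{d,V\otimes W}$ naturally; but this holds by construction since a representable $\Ga^{d,V}$ is already its own canonical colimit (the identity $\Ga^{d,V}\to\Ga^{d,V}$ is terminal in the indexing category), and similarly the appeal to $\Hom(-,-)$ on $\Ga^d\P_k$ enters through the identity $\HOM_{\Ga^d_k}(\Ga^{d,W},\Ga^{d,Z})\cong\Ga^{d,\Hom(W,Z)}$, which one checks on representables using the internal adjunction $\Hom_{\Ga^d\P_k}(U\otimes W,Z)\cong\Hom_{\Ga^d\P_k}(U,\Hom(W,Z))$ recorded earlier. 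I expect the only real subtlety — the main obstacle — to be the careful bookkeeping that the double colimit defining $X\otimes_{\Ga^d_k}Y$ is functorial and colimit-preserving \emph{jointly} in both variables (not just separately), which is what licenses the interchange of colimits and the coend description; this is routine but must be done with some care, and it is cleanest to phrase it once and for all as the universal property of Day convolution and cite \cite{Da1970, IK1986}.
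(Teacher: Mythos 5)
Your construction of the tensor product is the same as the paper's: define it on representables by $\Ga^{d,V}\otimes_{\Ga^d_k}\Ga^{d,W}=\Ga^{d,V\otimes W}$ and extend by the canonical colimits over $\Ga^{d,V}\to X$ and $\Ga^{d,W}\to Y$ (the paper likewise views this as Day convolution and cites the same sources). Where you genuinely diverge is the internal Hom and the adjunction. The paper constructs $\HOM_{\Ga^d_k}(-,-)$ explicitly, setting $\HOM_{\Ga^d_k}(\Ga^{d,V},Y)=\colim_{\Ga^{d,W}\to Y}\Ga^{d,\Hom(V,W)}$ and then $\HOM_{\Ga^d_k}(X,Y)=\lim_{\Ga^{d,V}\to X}\HOM_{\Ga^d_k}(\Ga^{d,V},Y)$, so that the analogue of your identity $\HOM_{\Ga^d_k}(\Ga^{d,V},\Ga^{d,W})=\Ga^{d,\Hom(V,W)}$ holds by definition, and it then proves the adjunction by a direct computation, writing $X,Y,Z$ as colimits of representables and interchanging limits and colimits (using that representables are projective and compact). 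You instead obtain $\HOM_{\Ga^d_k}(Y,-)$ abstractly as the right adjoint of the colimit-preserving functor $-\otimes_{\Ga^d_k}Y$ via the adjoint functor theorem, so the adjunction isomorphism is free, and you then verify the representable formula afterwards. This is a valid trade-off: your route avoids the lim/colim bookkeeping, but it requires noting that $\Ga^d\P_k$ is essentially small (so $\Rep\Ga^d_k$ is a Grothendieck, locally presentable category and the adjoint functor theorem applies), and your final check of $\HOM_{\Ga^d_k}(\Ga^{d,W},\Ga^{d,Z})\cong\Ga^{d,\Hom(W,Z)}$ needs slightly more than the stated adjunction in $\Ga^d\P_k$: since the Yoneda embedding is contravariant, one must also invoke the duality $(\Ga^d\P_k)^\op\simeq\Ga^d\P_k$ (equivalently the isomorphisms \eqref{eq:piso1}--\eqref{eq:piso3}) to identify $\Hom_{\Ga^d\P_k}(Z,V\otimes W)$ with $\Hom_{\Ga^d\P_k}(\Hom(W,Z),V)$; this is routine but should be said. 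The paper's explicit construction also has the practical advantage that the concrete description of $\HOM_{\Ga^d_k}(\Ga^{d,V},-)$ is reused immediately afterwards (Lemma~\ref{le:representable}), whereas in your version that description would have to be extracted from the adjunction by the same kind of computation.
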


To be precise, one requires for $V,W\in\Ga^d\P_k$ that 
\begin{align}
\label{eq:intern1}\Ga^{d,V}\otimes_{\Ga^d_k}\Ga^{d,W}&=\Ga^{d,V\otimes W}\\
\label{eq:intern2}\HOM_{\Ga^d_k}(\Ga^{d,V},\Ga^{d,W})&=\Ga^{d,\Hom(V,W)}
\end{align} 
and this determines both bifunctors.

\begin{proof}
Given $X,Y\in\Rep\Ga^d_k$ and $V\in\Ga^d\P_k$, one defines
\begin{align*}
\Ga^{d,V}\otimes_{\Ga^d_k} Y&=\colim_{\Ga^{d,W}\to Y} \Ga^{d,V\otimes
  W}\\
\HOM_{\Ga^d_k}(\Ga^{d,V}, Y)&=\colim_{\Ga^{d,W}\to Y} \Ga^{d,\Hom(V,W)}
\intertext{and then}
X\otimes_{\Ga^d_k} Y&=\colim_{\Ga^{d,V}\to X}
\Ga^{d,V}\otimes_{\Ga^d_k} Y\\
\HOM_{\Ga^d_k}(X, Y)&=\lim_{\Ga^{d,V}\to
  X}\HOM_{\Ga^d_k}(\Ga^{d,V},Y).
\end{align*}

For the adjunction isomorphism, write $X=\colim_\a X_\a$,
$Y=\colim_\b Y_\b$, and $Z=\colim_\g Z_\g$ as colimits of
representable functors as before. Then we have
\begin{align*}
  \Hom_{\Ga^d_k}(X\otimes_{\Ga^d_k}Y,Z)&\cong
  \Hom_{\Ga^d_k}(\colim_\a\colim_\b X_\a\otimes_{\Ga^d_k}Y_\b,\colim_\g Z_\g)\\
  &\cong
  \lim_\a\lim_\b\colim_\g \Hom_{\Ga^d_k}(X_\a\otimes_{\Ga^d_k}Y_\b,Z_\g)\\
  &\cong
  \lim_\a\lim_\b\colim_\g \Hom_{\Ga^d_k}(X_\a,\HOM_{\Ga^d_k}(Y_\b,Z_\g))\\
  &\cong
  \Hom_{\Ga^d_k}(\colim_\a X_\a,\lim_\b\colim_\g\HOM_{\Ga^d_k}(Y_\b,Z_\g))\\
  &\cong \Hom_{\Ga^d_k}(X,\HOM_{\Ga^d_k}(Y,Z)).\tag*{\qed}
\end{align*}
\renewcommand{\qedsymbol}{}
\end{proof}

The tensor product and the internal Hom functor enjoy the usual
categorical properties. For instance, the tensor product is right
exact and can be computed using projective presentations.

\begin{lem}\label{le:representable}
Given $X\in\Rep\Ga^d_k$ and $V\in\Ga^d\P_k$, there are natural isomorphisms
\begin{align*}
\Ga^{d,V}\otimes_{\Ga^d_k}X&\cong X\comp\Hom(V,-)\\ 
\HOM_{\Ga^d_k}(\Ga^{d,V},X)&\cong X\comp V\otimes -.
\end{align*}
\end{lem}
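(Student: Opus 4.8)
The plan is to verify both formulas directly from the defining colimit formulas given in the proof of Proposition~\ref{pr:tensor}, using the Yoneda philosophy: every representation is a colimit of representables, both sides of each asserted isomorphism commute with colimits in $X$ (the left sides by construction, the right sides because $(-)\comp\Hom(V,-)$ and $(-)\comp V\otimes-$ are computed pointwise and precomposition with a fixed functor preserves colimits of $k$-modules), so it suffices to check the isomorphism when $X=\Ga^{d,W}$ is representable, and then extend by the canonical colimit presentation $\colim_{\Ga^{d,W}\to X}\Ga^{d,W}\xto{\sim}X$.

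For the representable case of the first formula, I would compute
\[\Ga^{d,V}\otimes_{\Ga^d_k}\Ga^{d,W}=\Ga^{d,V\otimes W}\]
by \eqref{eq:intern1}, and on the other side evaluate $\Ga^{d,W}\comp\Hom(V,-)$ at an object $U$: this is $\Ga^{d,W}(\Hom(V,U))=\Hom_{\Ga^d\P_k}(W,\Hom(V,U))=\Ga^d\Hom(W,\Hom(V,U))$. Then I invoke the adjunction $\Hom_{\Ga^d\P_k}(W,\Hom(V,U))\cong\Hom_{\Ga^d\P_k}(V\otimes W,U)$ (equivalently the tensor–Hom adjunction for $\P_k$ applied inside $\Ga^d$, i.e. the natural isomorphism $\Hom(W,\Hom(V,U))\cong\Hom(V\otimes W,U)$ underlying the one for $\Ga^d\P_k$) to identify this with $\Hom_{\Ga^d\P_k}(V\otimes W,U)=\Ga^{d,V\otimes W}(U)$, naturally in $U$. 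This gives the first isomorphism on representables; naturality in $W$ is routine, and then passing to the colimit over $\Ga^{d,W}\to X$ yields the general statement.

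For the second formula I argue analogously, but now using the \emph{limit} formula: $\HOM_{\Ga^d_k}(\Ga^{d,V},X)=\lim_{\Ga^{d,W}\to X}\Ga^{d,\Hom(V,W)}$ in the notation of the proof — wait, more directly one has $\HOM_{\Ga^d_k}(\Ga^{d,V},\Ga^{d,W})=\Ga^{d,\Hom(V,W)}$ by \eqref{eq:intern2}. Evaluating at $U$ gives $\Hom_{\Ga^d\P_k}(\Hom(V,W),U)=\Ga^d\Hom(\Hom(V,W),U)$. On the other side, $(\Ga^{d,W}\comp V\otimes-)(U)=\Ga^{d,W}(V\otimes U)=\Ga^d\Hom(W,V\otimes U)$. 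The required identification $\Hom(\Hom(V,W),U)\cong\Hom(W,V\otimes U)$ comes from combining the duality $\Hom(V,W)\cong V^*\otimes W$ (isomorphism \eqref{eq:piso1}) with \eqref{eq:piso2} and $\Hom(V^*,-)\cong V\otimes-$; this is natural and induces the isomorphism after applying $\Ga^d$. Then extend along the colimit presentation of $X$, noting $\HOM_{\Ga^d_k}(\Ga^{d,V},-)$ sends the colimit $\colim_{\Ga^{d,W}\to X}\Ga^{d,W}\xto{\sim}X$ to $\colim_{\Ga^{d,W}\to X}\Ga^{d,\Hom(V,W)}$ because $\HOM_{\Ga^d_k}(\Ga^{d,V},-)$ preserves colimits (being pointwise precomposition), matching the colimit computation of $X\comp V\otimes-$.

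The main obstacle is bookkeeping rather than conceptual: one must track the symmetric-group actions carefully to be sure that the adjunction and duality isomorphisms for $\P_k$ — which a priori are only statements about $k$-modules — descend to $\frS_d$-invariants and hence to $\Ga^d\P_k$-morphisms, and that all the identifications are natural in all variables so the colimit extension is legitimate. Concretely, I would phrase the argument so that each isomorphism of $\P_k$-bifunctors is $\frS_d$-equivariant for the relevant diagonal actions, whence applying $\Ga^d=(-)^{\frS_d}$ preserves it; the adjunction $\Hom_{\Ga^d_k}(X\otimes_{\Ga^d_k}Y,Z)\cong\Hom_{\Ga^d_k}(X,\HOM_{\Ga^d_k}(Y,Z))$ from Proposition~\ref{pr:tensor} can also be used as a shortcut to deduce the first formula from the second (or vice versa) by Yoneda, since $\Hom_{\Ga^d_k}(\Ga^{d,V}\otimes_{\Ga^d_k}X,-)\cong\Hom_{\Ga^d_k}(X,\HOM_{\Ga^d_k}(\Ga^{d,V},-))$, which may streamline the write-up.
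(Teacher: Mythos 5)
Your proposal is correct and follows essentially the same route as the paper: verify the isomorphisms on representables via \eqref{eq:intern1} and \eqref{eq:intern2} (together with the tensor--Hom adjunction and the duality isomorphisms \eqref{eq:piso1}, \eqref{eq:piso2} for $\P_k$), then extend to arbitrary $X$ using the canonical colimit presentation, since $\Ga^{d,V}\otimes_{\Ga^d_k}-$ and $\HOM_{\Ga^d_k}(\Ga^{d,V},-)$ preserve colimits by their defining formulas and the right-hand sides are computed pointwise. Your added remarks on $\frS_d$-equivariance and the optional adjunction-plus-Yoneda shortcut merely flesh out details the paper leaves implicit.
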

\begin{proof}
  The isomorphisms are clear from \eqref{eq:intern1} and
  \eqref{eq:intern2} when $X$ is representable. The general case
  follows since $\Ga^{d,V}\otimes_{\Ga^d_k}-$ and
  $\HOM_{\Ga^d_k}(\Ga^{d,V},-)$ preserve colimits.
\end{proof}

\subsection*{Hom-tensor identities}
We collect some basic identities for the internal products
$-\otimes_{\Ga^d_k}-$ and $\HOM_{\Ga^d_k}(-,-)$.
\begin{lem}\label{le:tensor-hom}
Given $X,Y,Z\in\Rep\Ga^d_k$, the natural morphism
\[\HOM _{\Ga^d_k} (X,Y) \otimes_{\Ga^d_k}
Z\lto\HOM_{\Ga^d_k}(X,Y\otimes_{\Ga^d_k} Z)\] is an isomorphism
provided that $X$ or $Z$ is finitely generated
projective.\footnote{This means that finitely generated projective
  objects are \emph{strongly dualisable} in the sense of
  \cite[III.1]{LMSM}. We refer to these notes for a discussion of
  further tensor categorical properties.}
\end{lem}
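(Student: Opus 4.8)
The plan is to reduce the general assertion to the representable case, where the identities from Lemma~\ref{le:representable} turn everything into a question about the functors $\Hom(V,-)$ and $V\otimes-$ on $\P_k$, which are then controlled by the classical isomorphisms \eqref{eq:piso1}--\eqref{eq:piso3}. The natural transformation itself is the canonical one adjoint to the composite
\[\HOM_{\Ga^d_k}(X,Y)\otimes_{\Ga^d_k}X\otimes_{\Ga^d_k}Z\lto Y\otimes_{\Ga^d_k}Z,\]
using the evaluation $\HOM_{\Ga^d_k}(X,Y)\otimes_{\Ga^d_k}X\to Y$ from Proposition~\ref{pr:tensor}; so the content is only that this particular map is invertible under the stated hypothesis.

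First I would treat the case $Z=\Ga^{d,V}$. By Lemma~\ref{le:representable}, $-\otimes_{\Ga^d_k}\Ga^{d,V}$ is the functor $(-)\comp\Hom(V,-)$, i.e. precomposition with $\Hom(V,-)\colon\Ga^d\P_k\to\Ga^d\P_k$, and precomposition with any functor is exact and commutes with the internal products in the obvious way; chasing the definitions shows the map in question is an isomorphism for \emph{any} $X,Y$. Dually, the case $X=\Ga^{d,V}$ follows from the second isomorphism in Lemma~\ref{le:representable}: $\HOM_{\Ga^d_k}(\Ga^{d,V},-)$ is precomposition with $V\otimes-$, so again both sides are computed by precomposition and the comparison map is visibly invertible. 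Now a general finitely generated projective object of $\Rep\Ga^d_k$ is a direct summand of a finite direct sum of representables; since both sides of the map are additive in each of $X$ and $Z$ and the formation of the map is compatible with finite biproducts and with passing to direct summands (split idempotents are preserved), the isomorphism for representables propagates to all finitely generated projective $X$, respectively $Z$.

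The step I expect to be the main obstacle is verifying that the comparison morphism really is natural and compatible with colimits in the \emph{non-fixed} variable, so that checking it on representables in the fixed variable suffices: one wants, for instance, that for $Z$ fixed finitely generated projective the assignment $X\mapsto\big(\HOM_{\Ga^d_k}(X,Y)\otimes_{\Ga^d_k}Z\to\HOM_{\Ga^d_k}(X,Y\otimes_{\Ga^d_k}Z)\big)$ is a morphism of functors in $X$ turning limits over projective presentations into limits, and similarly in $Y$. This is where one uses that $-\otimes_{\Ga^d_k}Z$ with $Z$ finitely generated projective is not merely right exact but exact — indeed, by Lemma~\ref{le:representable} it is precomposition with an exact functor on $\Ga^d\P_k$ — and that $\HOM_{\Ga^d_k}(X,-)$ sends the defining presentation $\Ga^{d,V_1}\oplus\Ga^{d,V_0}\to X\to 0$ to an exact sequence, so that the five lemma applies. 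Once exactness of $-\otimes_{\Ga^d_k}Z$ is in hand, the reduction is routine: write $X$ as a cokernel of a map of finite direct sums of representables, apply the already-established representable case to the two ends, and conclude by the five lemma; the symmetric argument handles the hypothesis on $Z$ in place of $X$.
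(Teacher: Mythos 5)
Your argument is essentially the paper's: the isomorphism is checked on representables, where Lemma~\ref{le:representable} (equivalently \eqref{eq:piso2}) identifies everything with precomposition by $\Hom(V,-)$ or $V\otimes-$, and then extended to the general case using that these functors preserve (co)limits, with additivity handling direct summands of finite sums of representables. Your third paragraph's five-lemma detour is not needed (and its ``presentation'' $\Ga^{d,V_1}\oplus\Ga^{d,V_0}\to X\to 0$ is misstated, since a general $X$ only admits presentations by possibly infinite sums of representables), but the core reduction is correct and matches the paper's proof.
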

\begin{proof}
  The isomorphism is clear from \eqref{eq:piso2} when all objects are
  representable functors. The general case follows by writing functors
  as colimits of representable functors, keeping in mind that
  $\Ga^{d,V}\otimes_{\Ga^d_k}-$ and $\HOM_{\Ga^d_k}(\Ga^{d,V},-)$
  preserve colimits.
\end{proof}

\begin{lem}\label{le:dual}
  Let $X,Y$ be in $\Rep\Ga^d_k$ and suppose that $X$ is finitely
  presented. Then there are natural isomorphisms
\begin{align*} 
X\otimes_{\Ga^d_k}Y^\circ&\cong \HOM_{\Ga^d_k}(X,Y)^\circ\\
(X\otimes_{\Ga^d_k}Y)^\circ&\cong \HOM_{\Ga^d_k}(X,Y^\circ).
\end{align*}
\end{lem}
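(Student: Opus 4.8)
\textbf{Proof plan for Lemma~\ref{le:dual}.}
The plan is to reduce both isomorphisms to the representable case and then to the basic duality $\Hom(V,W^*)\cong\Hom(W,V^*)$ for $\P_k$, using the adjunction of Proposition~\ref{pr:tensor}, the Yoneda isomorphism, and the identities of Lemma~\ref{le:representable}. Since $X$ is finitely presented, one may choose an exact sequence $\Ga^{d,V_1}\to\Ga^{d,V_0}\to X\to 0$; because both sides of each claimed isomorphism are right exact in $X$ (the tensor product is right exact, $\HOM_{\Ga^d_k}(-,Y)$ is left exact into the category, hence contravariantly right exact, and the Kuhn dual $(-)^\circ$ is exact as it is computed pointwise from the exact duality on $\P_k$), it suffices to construct the natural transformations and check they are isomorphisms when $X=\Ga^{d,V}$ is representable.

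For the representable case I would argue as follows. First note $\HOM_{\Ga^d_k}(\Ga^{d,V},Y)\cong Y\comp(V\otimes-)$ by Lemma~\ref{le:representable}, so its dual evaluated at $W$ is $Y((V\otimes W^*)^*)^*$; on the other hand $\Ga^{d,V}\otimes_{\Ga^d_k}Y^\circ\cong Y^\circ\comp\Hom(V,-)$, again by Lemma~\ref{le:representable}, so evaluated at $W$ it is $Y^\circ(\Hom(V,W))=Y(\Hom(V,W)^*)^*$. The two agree because $\Hom(V,W)^*\cong V\otimes W^*$ naturally by \eqref{eq:piso1} (applied to $V^*$ in place of $V$) together with $V^{**}\cong V$, and this isomorphism, being the dual of an isomorphism in $\Ga^d\P_k$, is respected by $Y$; taking $k$-linear duals once more gives the first isomorphism, natural in $V$ and $Y$. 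For the second isomorphism, $(\Ga^{d,V}\otimes_{\Ga^d_k}Y)^\circ$ evaluated at $W$ is $(Y\comp\Hom(V,-))(W^*)^*=Y(\Hom(V,W^*))^*$, while $\HOM_{\Ga^d_k}(\Ga^{d,V},Y^\circ)$ evaluated at $W$ is $Y^\circ(V\otimes W)=Y((V\otimes W)^*)^*$; these coincide since $\Hom(V,W^*)\cong\Hom(V\otimes W,k)=(V\otimes W)^*$ naturally, which is precisely \eqref{eq:piso1} combined with the tensor–Hom adjunction for $\P_k$, and this identification is again carried along by $Y$.

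Having fixed the isomorphisms on representables, naturality in $X$ lets me extend them to finitely presented $X$ by the five lemma applied to the presentation above, and naturality in $Y$ is inherited throughout. The main obstacle I anticipate is bookkeeping rather than conceptual: one must pin down the \emph{canonical} natural transformations on both sides (coming from evaluation maps and the internal adjunction) so that on representables they genuinely reduce to the maps \eqref{eq:piso1}–\eqref{eq:piso3} and the swap $\Hom(V,W^*)\cong\Hom(W,V^*)$, and one must verify these transformations commute with the colimit presentations used to define $-\otimes_{\Ga^d_k}-$ and $\HOM_{\Ga^d_k}(-,-)$ — in particular that finite presentedness of $X$ (not mere finite generation) is what makes $\HOM_{\Ga^d_k}(X,-)$ and the dual interact correctly with colimits in $Y$. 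Once the correct maps are identified, checking they are isomorphisms is the routine diagram chase indicated above.
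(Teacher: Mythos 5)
Your overall strategy---prove the isomorphisms for $X=\Ga^{d,V}$ using Lemma~\ref{le:representable} together with \eqref{eq:piso1} and the tensor--Hom adjunction in $\P_k$, then extend along a presentation $\Ga^{d,V_1}\to\Ga^{d,V_0}\to X\to 0$ by exactness---is exactly the paper's, and your computations in the representable case are correct. The problem lies in the exactness bookkeeping for the extension step. You justify "both sides are right exact in $X$" by claiming that the Kuhn dual $(-)^\circ$ is exact because it is computed pointwise from the exact duality on $\P_k$. This is false over a general commutative ring $k$: a representation takes values in $\M_k$, not in $\P_k$, and $\Hom_k(-,k)$ on arbitrary $k$-modules is only left exact (it is exact when $k$ is a field, or self-injective). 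For the second isomorphism this is harmless and easily repaired: both $X\mapsto(X\otimes_{\Ga^d_k}Y)^\circ$ and $X\mapsto\HOM_{\Ga^d_k}(X,Y^\circ)$ are contravariant and send the presentation to left exact sequences (cokernels to kernels), which is all your five-lemma comparison needs; just phrase it as left exactness rather than "contravariantly right exact".

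For the first isomorphism the gap is genuine: the composite $X\mapsto\HOM_{\Ga^d_k}(X,Y)^\circ$ is not right exact in $X$, because $\HOM_{\Ga^d_k}(-,Y)$ turns the presentation into a kernel and $(-)^\circ$ does not turn kernels into cokernels in general. Concretely, take $k=\bbZ$, $d=1$, $Y=\Ga^{1,k}$ and $X=\Coker(2\cdot\id\colon\Ga^{1,k}\to\Ga^{1,k})$; then $\HOM_{\Ga^1_k}(X,Y)(U)\cong\Ker(2\colon U\to U)=0$ for every $U\in\P_k$, while $(X\otimes_{\Ga^1_k}Y^\circ)(U)\cong U/2U\neq 0$, so the claimed right exactness of the right-hand side---and indeed the first displayed isomorphism in this generality---breaks down. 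Your argument for the first isomorphism therefore needs an extra input: for instance, take $k$ to be a field (then $(-)^\circ$ is exact and your reduction goes through verbatim, which covers the places where the lemma is later applied), or impose finiteness hypotheses under which the first isomorphism can be deduced from the second (applied to $Y^\circ$, using $Y^{\circ\circ}\cong Y$ and reflexivity of $X\otimes_{\Ga^d_k}Y^\circ$). The paper's own proof says only "one uses exactness", so in outline you match it; but the justification you give for the key exactness claim does not hold as stated, and this is precisely the delicate point of the non-representable step.
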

\begin{proof}
  Suppose first that $X$ is representable. Then the isomorphism
  follows from Lemma~\ref{le:representable}, using
  \eqref{eq:piso1}. For a representation $X$ that admits a
  presentation \[\Ga^{d,V_1}\lto\Ga^{d,V_0}\lto X\lto 0\] one uses exactness.
\end{proof}

\begin{lem}\label{le:dual1}
Given $X,Y\in\Rep\Ga^d_k$, there is a natural isomorphism 
\[\HOM_{\Ga^d_k}(X,Y^\circ)\cong\HOM_{\Ga^d_k}(Y,X^\circ).\]
\end{lem}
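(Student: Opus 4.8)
The plan is to reduce the statement to the adjunction isomorphism \eqref{eq:dual} together with the tensor–Hom adjunction of Proposition~\ref{pr:tensor}, via a Yoneda-style argument. Concretely, I would test both sides against an arbitrary representation $Z\in\Rep\Ga^d_k$ by applying $\Hom_{\Ga^d_k}(Z,-)$ and producing a chain of natural isomorphisms. Starting from $\Hom_{\Ga^d_k}\big(Z,\HOM_{\Ga^d_k}(X,Y^\circ)\big)$, the tensor–Hom adjunction rewrites this as $\Hom_{\Ga^d_k}\big(Z\otimes_{\Ga^d_k}X,\,Y^\circ\big)$; then \eqref{eq:dual} turns it into $\Hom_{\Ga^d_k}\big(Y,\,(Z\otimes_{\Ga^d_k}X)^\circ\big)$. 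The symmetry of $\otimes_{\Ga^d_k}$ (which follows from the symmetry of $-\otimes-$ on $\Ga^d\P_k$ and the defining identity \eqref{eq:intern1}) identifies $Z\otimes_{\Ga^d_k}X\cong X\otimes_{\Ga^d_k}Z$, so this is $\Hom_{\Ga^d_k}\big(Y,\,(X\otimes_{\Ga^d_k}Z)^\circ\big)$. Running the same three steps in the other order — tensor–Hom adjunction, then \eqref{eq:dual}, then symmetry — starting from $\Hom_{\Ga^d_k}\big(Z,\HOM_{\Ga^d_k}(Y,X^\circ)\big)$ lands on the very same object. Since the composite isomorphism is natural in $Z$, Yoneda gives the desired isomorphism $\HOM_{\Ga^d_k}(X,Y^\circ)\cong\HOM_{\Ga^d_k}(Y,X^\circ)$.

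An alternative, and perhaps cleaner, route avoids $Z$ entirely and is worth mentioning: when $X$ is finitely presented one may combine the two isomorphisms of Lemma~\ref{le:dual}. Indeed the second isomorphism there reads $(X\otimes_{\Ga^d_k}Y)^\circ\cong\HOM_{\Ga^d_k}(X,Y^\circ)$, and since $X\otimes_{\Ga^d_k}Y\cong Y\otimes_{\Ga^d_k}X$ by symmetry, the same lemma applied with the roles of the arguments exchanged (now requiring $Y$ finitely presented) gives $\HOM_{\Ga^d_k}(Y,X^\circ)\cong(Y\otimes_{\Ga^d_k}X)^\circ\cong(X\otimes_{\Ga^d_k}Y)^\circ$, and the two right-hand sides agree. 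To obtain the statement in full generality, without finiteness hypotheses, one falls back on the $Z$-testing argument above, which uses only the adjunctions and \eqref{eq:dual} and hence needs no such assumptions.

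The main obstacle is bookkeeping rather than conceptual: one must check that the iterated natural isomorphism in the first approach is genuinely natural in $Z$ (each individual step is, by construction, but the composite must be assembled carefully), and that the symmetry isomorphism for $\otimes_{\Ga^d_k}$ is compatible with the adjunctions — essentially a coherence check at the level of representable functors, where everything is governed by the symmetric monoidal structure on $\Ga^d\P_k$ coming from $-\otimes-$ on $\P_k$. Once naturality in $Z$ is secured, the Yoneda lemma closes the argument immediately, and the resulting isomorphism is automatically natural in $X$ and $Y$ as well.
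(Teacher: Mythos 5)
There is a genuine gap in your main argument. After your three steps (adjunction, \eqref{eq:dual}, symmetry) the first chain ends at $\Hom_{\Ga^d_k}\bigl(Y,(X\otimes_{\Ga^d_k}Z)^\circ\bigr)$, while the second chain ends at $\Hom_{\Ga^d_k}\bigl(X,(Y\otimes_{\Ga^d_k}Z)^\circ\bigr)$. These are \emph{not} ``the very same object'': one consists of morphisms out of $Y$, the other of morphisms out of $X$, and a natural identification between them is essentially the lemma you are trying to prove (in a three-variable form), so the argument as written is circular at its crucial point. The missing ingredient is the identification $(Z\otimes_{\Ga^d_k}X)^\circ\cong\HOM_{\Ga^d_k}(Z,X^\circ)$, which would let you pull the test object inside the internal Hom and make the two chains actually meet; by Lemma~\ref{le:dual} this identification is only available when the test object is finitely presented. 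This is precisely why the paper tests against a \emph{representable} functor $P$ rather than an arbitrary $Z$: its proof runs $\Hom_{\Ga^d_k}(P,\HOM_{\Ga^d_k}(X,Y^\circ))\cong\Hom_{\Ga^d_k}(P\otimes_{\Ga^d_k}X,Y^\circ)\cong\Hom_{\Ga^d_k}(Y,(P\otimes_{\Ga^d_k}X)^\circ)\cong\Hom_{\Ga^d_k}(Y,\HOM_{\Ga^d_k}(P,X^\circ))\cong\Hom_{\Ga^d_k}(P,\HOM_{\Ga^d_k}(Y,X^\circ))$, where the third isomorphism is Lemma~\ref{le:dual} applied with the representable (hence finitely presented) $P$ in the first slot, and Yoneda (for which representables suffice) closes the argument. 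Your proof becomes correct once you restrict the test object to representables and insert this Lemma~\ref{le:dual} step; without it the two chains never join.

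Your alternative route is fine as far as it goes, but, as you yourself note, it needs \emph{both} $X$ and $Y$ finitely presented, so it only proves a special case; and since your fallback for the general case is the flawed $Z$-testing argument, the proposal as a whole does not establish the statement in the stated generality.
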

\begin{proof}
  Choose a representable functor $P$. Using \eqref{eq:dual} and
  Lemma~\ref{le:dual}, we have
\begin{align*}
  \Hom_{\Ga^d_k}(P,\HOM_{\Ga^d_k}(X,Y^\circ))&\cong
  \Hom_{\Ga^d_k}(P\otimes_{\Ga^d_k}X,Y^\circ)\\
  &\cong  \Hom_{\Ga^d_k}(Y,(P\otimes_{\Ga^d_k}X)^\circ)\\
  &\cong  \Hom_{\Ga^d_k}(Y,\HOM_{\Ga^d_k}(P,X^\circ))\\
 &\cong  \Hom_{\Ga^d_k}(P,\HOM_{\Ga^d_k}(Y,X^\circ)).
\end{align*}
The assertion now follows from Yoneda's lemma.
\end{proof}

\subsection*{The external tensor product}
Let $d,e$ be non-negative integers and denote by
$(\Ga^d\otimes\Ga^e)\P_k$ the category having as objects the
finitely generated projective $k$-modules and as morphisms $V\to W$
\[\Ga^d\Hom(V,W)\otimes \Ga^e\Hom(V,W).\]
The inclusion $\frS_{d}\times\frS_e\subseteq \frS_{d+e}$ induces for each $V\in\P_k$ an
inclusion \[\Ga^{d+e}V=(V^{\otimes d+e})^{\frS_{d+e}}\subseteq (V^{\otimes d+e})^{\frS_d\times\frS_e}
\cong (V^{\otimes d})^{\frS_d}\otimes (V^{\otimes e})^{\frS_e}=\Ga^dV\otimes\Ga^eV.
\]
This yields a $k$-linear
functor \[C^{d,e}\colon\Ga^{d+e}\P_k\lto(\Ga^d\otimes\Ga^e)\P_k\]
which is the identity on objects.  It induces a tensor
product \[-\otimes -\colon\Rep\Ga^d_k\times \Rep\Ga^e_k\lto
\Rep\Ga^{d+e}_k.\] To be precise, one defines for $X\in\Rep\Ga^d_k$,
$Y\in\Rep\Ga^e_k$, and $V\in\P_k$
\[(X\otimes Y)(V)=X(V)\otimes Y(V)\] and
$X\otimes Y$ acts on morphisms via $C^{d,e}$.

Note that the external tensor product is exact when restricted to
finite representations and that \[(X\otimes Y)^\circ\cong
X^\circ\otimes Y^\circ\] for finite representations $X,Y$.

For integers $d\ge 0$ and $n >0$, we denote by $\La(n,d)$ the set of
sequences $\la=(\la_1,\la_2,\ldots,\la_n)$ of non-negative integers
such that $\sum\la_i=d$. Given $\la\in\La(n,d)$, we
write \[\Ga^\la=\Ga^{\la_1}\otimes\cdots\otimes\Ga^{\la_n}\quad
\text{and}\quad S^\la=S^{\la_1}\otimes\cdots\otimes S^{\la_n}.\] Note
that
\begin{equation}\label{eq:tensor}
\Ga^{(1,\ldots,1)}\cong\otimes ^n\cong S^{(1,\ldots,1)}.
\end{equation}

\subsection*{Graded representations} 

It is sometimes convenient to consider the category 
\[\prod_{d\ge 0}\Rep\Ga^d_k\]
consisting of graded representations $X=(X^0,X^1,X^2,\ldots)$. An
example is for each $V\in\P_k$ the representation
\[\Ga^V=(\Ga^{0,V},\Ga^{1,V},\Ga^{2,V},\ldots).\] There are two tensor
products for graded representations. Given representations $X,Y$, they
are defined in degree $d$ by
\[(X\otimes Y)^d=\bigoplus_{i+j=d}X^i\otimes Y^j
\quad\text{and}\quad
(X\otimes_{\Ga_k} Y)^d=X^d\otimes_{\Ga^d_k} Y^d.\]

\subsection*{Decomposing divided powers}

Given $V\in\P_k$, let $SV=\bigoplus_{d\ge 0}S^dV$ denote the
\emph{symmetric algebra}. This gives a functor from $\P_k$ to the
category of commutative $k$-algebras which preserves coproducts. Thus
\[S V\otimes S W\cong S(V\oplus W)\]
and therefore by duality
\[\Ga V\otimes \Ga W\cong\Ga (V\oplus W).\]
This yields an isomorphism of graded representations
\[\Ga^{V}\otimes\Ga^{W}\cong \Ga^{V\oplus W}\]
since for each $d\ge 0$
\[(\Ga^{V}\otimes\Ga^{W})^d\cong \bigoplus_{i+j=d}(\Ga^{i,V}\otimes \Ga^{j,W})
\cong\Ga^{d,V\oplus W}.\]
Thus for each positive integer $n$, one obtains a decomposition
\begin{equation*}
\Ga^{d,k^n}=\bigoplus_{i=0}^d(\Ga^{d-i,k^{n-1}}\otimes \Ga^i)
\end{equation*}
and using induction a canonical decomposition
\begin{equation}\label{eq:decomp}
\Ga^{d,k^n}=\bigoplus_{\la\in\La(n,d)}\Ga^{\la}.
\end{equation}

This decomposition of divided powers has the following immediate
consequence.

\begin{prop}\label{pr:proj}
  The category of finitely generated projective objects of
  $\Rep\Ga^d_k$ is equivalent to the category of direct summands of
  finite direct sums of representations of the form $\Ga^\la$, where
  $\la=(\la_1,\ldots,\la_n)$ is a sequence of non-negative integers
  satisfying $\sum\la_i=d$ and $n$ is a positive integer. 
\end{prop}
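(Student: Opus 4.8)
The plan is to exhibit a (necessarily finite) set of representable projectives that generates the category of finitely generated projective objects and is closed, up to direct summands, under finite direct sums; by the decomposition~\eqref{eq:decomp} these are precisely the $\Ga^\la$. First I would recall that in a functor category $\Fun_k(\Ga^d\P_k,\M_k)$ the finitely generated projective objects are exactly the direct summands of finite direct sums of representable functors $\Ga^{d,V}$: projectivity of each $\Ga^{d,V}$ and the Yoneda isomorphism $\Hom_{\Ga^d_k}(\Ga^{d,V},X)\cong X(V)$ were established in the excerpt, a finitely generated $X$ admits by definition an epimorphism $\Ga^{d,V_1}\oplus\cdots\oplus\Ga^{d,V_r}\to X$, and if moreover $X$ is projective this epimorphism splits, so $X$ is a direct summand of $\bigoplus_i\Ga^{d,V_i}$. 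Conversely every such summand is finitely generated and projective. Thus the category in question is the idempotent completion (Karoubi envelope) of the additive hull of $\{\Ga^{d,V}\mid V\in\Ga^d\P_k\}$.

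The second and decisive step is to cut this generating set down to the $\Ga^\la$. Every $V\in\P_k$ is a direct summand of some $k^n$, so $\Ga^{d,V}$ is a direct summand of $\Ga^{d,k^n}$ (the assignment $V\mapsto\Ga^{d,V}$ is additive, since $\Hom_{\Ga^d\P_k}(V\oplus V',-)=\Ga^d\Hom(V\oplus V',-)$ splits compatibly — indeed $\Ga^{V}\otimes\Ga^{V'}\cong\Ga^{V\oplus V'}$ as graded representations, as shown just before the statement). Hence the idempotent completion of $\add\{\Ga^{d,V}\}$ equals the idempotent completion of $\add\{\Ga^{d,k^n}\mid n>0\}$. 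Now invoke the canonical decomposition~\eqref{eq:decomp}: $\Ga^{d,k^n}\cong\bigoplus_{\la\in\La(n,d)}\Ga^\la$. Therefore each $\Ga^{d,k^n}$ lies in $\add\{\Ga^\la\}$, and conversely each $\Ga^\la$ with $\la\in\La(n,d)$ is a direct summand of $\Ga^{d,k^n}$, so the two additive hulls have the same idempotent completion. Combining the two steps, the category of finitely generated projective objects of $\Rep\Ga^d_k$ is equivalent to the idempotent completion of $\add\{\Ga^\la\}$, i.e. to the category of direct summands of finite direct sums of the $\Ga^\la$, which is the assertion.

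I expect the only genuine subtlety to be the bookkeeping that $V\mapsto\Ga^{d,V}$ is additive and that the splitting $\Ga^{d,k^n}\cong\bigoplus_\la\Ga^\la$ is compatible with morphisms in $\Rep\Ga^d_k$ rather than merely pointwise over $k$ — but this is exactly the content of the graded-representation isomorphism $\Ga^{V}\otimes\Ga^{W}\cong\Ga^{V\oplus W}$ recorded in the excerpt, applied iteratively, so there is nothing to prove afresh. Everything else is a formal consequence of the Yoneda lemma and the fact that a projective receiving a split epimorphism from a finite sum of projectives is a summand of that sum; no hypotheses on $k$ are needed. One could phrase the conclusion more slickly by saying that $\{\Ga^\la\}$ is a set of (additive) generators for the projectives and that the finitely generated projectives form the additive Karoubi closure of this set, but the direct-summand formulation in the statement follows immediately.
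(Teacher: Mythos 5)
Your proof is correct and follows the same route as the paper: Yoneda's lemma identifies the finitely generated projectives with direct summands of finite sums of representables $\Ga^{d,V}$, and the decomposition \eqref{eq:decomp} reduces these to the $\Ga^\la$. You merely make explicit two steps the paper leaves implicit — the split-epimorphism argument and the passage from an arbitrary $V\in\P_k$ to a free module $k^n$ (where your appeal to $\Ga^{V}\otimes\Ga^{W}\cong\Ga^{V\oplus W}$, or simply functoriality of $\Ga^{d,-}$ applied to a retraction $V\to k^n\to V$, does the job) — so there is nothing to add.
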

\begin{proof}
  This follows from Yoneda's lemma and the fact that each
  representable functor admits a decomposition \eqref{eq:decomp} into
  representations of the form $\Ga^\la$.
\end{proof}

\subsection*{Representations of Schur algebras}

Strict polynomial functors and modules over Schur algebras are closely
related by a result due to Friedlander and Suslin
\cite[Theorem~3.2]{FS1997}; it is an immediate consequence of
Proposition~\ref{pr:proj}.  Given any ring $A$, we denote by $\Mod A$
the category of $A$-modules.

\begin{thm}\label{th:schur}
Let $d,n$ be positive integers. Evaluation at $k^n$ induces a
  functor $\Rep\Ga^d_k\to \Mod S_k(n,d)$ which is an
  equivalence if $n\ge d$.
\end{thm}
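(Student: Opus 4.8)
The plan is to exhibit the evaluation functor $\ev_n\colon\Rep\Ga^d_k\to\Mod S_k(n,d)$ concretely and then recognise it, when $n\ge d$, as the equivalence induced by a projective generator. First I would record that $\ev_n$ sends $X$ to the $k$-module $X(k^n)$, which carries an action of $\End_{\Ga^d\P_k}(k^n)=S_k(n,d)$ via the structure map $X_{k^n,k^n}$, and that $\ev_n$ is $k$-linear, exact (since (co)kernels in $\Rep\Ga^d_k$ are computed pointwise), and preserves coproducts. By the Yoneda isomorphism $\Hom_{\Ga^d_k}(\Ga^{d,k^n},X)\xto{\sim}X(k^n)$, the functor $\ev_n$ is corepresented by the projective object $P_n:=\Ga^{d,k^n}$, and under this identification the $S_k(n,d)$-action on $X(k^n)$ matches the natural right action of $\End_{\Ga^d_k}(P_n)^\op\cong S_k(n,d)$ on $\Hom_{\Ga^d_k}(P_n,X)$. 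So $\ev_n\cong\Hom_{\Ga^d_k}(P_n,-)$ as functors to $\Mod S_k(n,d)$.

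The next step is the general nonsense about $\Hom$ from a projective object: since $\Rep\Ga^d_k$ is a cocomplete abelian category with a projective generator and $P_n$ is a finitely generated projective object with $\End(P_n)^\op\cong S_k(n,d)$, the functor $\Hom_{\Ga^d_k}(P_n,-)$ is an equivalence onto $\Mod S_k(n,d)$ \emph{provided} $P_n$ is itself a \emph{generator}, i.e. every object of $\Rep\Ga^d_k$ is a quotient of a coproduct of copies of $P_n$. (Exactness and coproduct-preservation give that $\Hom_{\Ga^d_k}(P_n,-)$ is part of an adjoint pair $-\otimes_{S_k(n,d)}P_n\dashv\Hom_{\Ga^d_k}(P_n,-)$, and the standard Gabriel–Mitchell/Morita argument then shows this adjunction is an equivalence exactly when $P_n$ is a projective generator.) So everything reduces to the claim: if $n\ge d$ then $\Ga^{d,k^n}$ is a generator of $\Rep\Ga^d_k$.

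To prove that $\Ga^{d,k^n}$ generates when $n\ge d$, I would use Proposition~\ref{pr:proj}: the finitely generated projective objects of $\Rep\Ga^d_k$ are exactly the direct summands of finite sums of the $\Ga^{\la}$ with $\la\in\La(m,d)$ for various $m$, and these $\Ga^{\la}$ form a set of projective generators (every object is a colimit, hence a quotient of a coproduct, of representables $\Ga^{d,V}$, and each $\Ga^{d,V}$ decomposes via \eqref{eq:decomp} into $\Ga^{\la}$'s). Thus it suffices to show that each $\Ga^{\la}$, $\la$ a composition of $d$ into any number of parts, is a direct summand of a finite sum of copies of $\Ga^{d,k^n}$. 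But any composition of $d$ has at most $d$ nonzero parts, so up to reordering (and padding with zeros) each such $\Ga^{\la}$ already appears as a direct summand in the decomposition \eqref{eq:decomp} of $\Ga^{d,k^n}=\bigoplus_{\la\in\La(n,d)}\Ga^{\la}$ once $n\ge d$; reordering the factors $k$ inside $k^n$ is realised by an automorphism of $k^n$ in $\Ga^d\P_k$, which permutes the summands. Hence $\add\Ga^{d,k^n}$ contains every $\Ga^{\la}$, so $\Ga^{d,k^n}$ is a projective generator, and $\ev_n$ is an equivalence.

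The main obstacle, and the only point requiring genuine care rather than formal manipulation, is the combinatorial claim that $\add\Ga^{d,k^n}$ contains all the $\Ga^{\la}$ when $n\ge d$ — precisely, making rigorous that a composition $\la$ of $d$ with possibly many zero parts, after deleting zeros, has $\le d$ parts and therefore (after padding back to $n\ge d$ parts with zeros and permuting) occurs in \eqref{eq:decomp}, together with checking that the relevant permutation of summands is induced by an honest morphism in $\Rep\Ga^d_k$ (namely the image of a permutation matrix in $\End_{\Ga^d\P_k}(k^n)$). Once that is in place, the rest is the standard Morita-theoretic recognition of $\Hom$ out of a finitely generated projective generator, which I would invoke with a one-line reference; I do not expect to need any hypothesis on $k$, in keeping with the paper's goal of removing assumptions on the coefficient ring.
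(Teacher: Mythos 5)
Your proposal is correct and follows essentially the same route as the paper: identify evaluation at $k^n$ with $\Hom_{\Ga^d_k}(\Ga^{d,k^n},-)$ via Yoneda, show that $\Ga^{d,k^n}$ is a (small) projective generator when $n\ge d$ using the decomposition \eqref{eq:decomp} and the fact that a composition of $d$ has at most $d$ nonzero parts, and conclude by the standard Morita-type recognition of $\Hom$ out of a projective generator. The only cosmetic difference is that you spell out the padding-with-zeros and permutation argument that the paper leaves implicit.
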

\begin{proof}
  Let $P\in\Rep\Ga^d_k$ be a \emph{small projective generator}, that is, $P$
  is a projective object and the functor $\Hom_{\Ga^d_k}(P,-)$ is
  faithful and preserves set-indexed direct
  sums. Then \[\Hom_{\Ga^d_k}(P,-)\colon \Rep\Ga^d_k\lto
  \Mod \End_{\Ga^d_k}(P)\] is an equivalence.

  From Yoneda's lemma it follows that the representable functors
  $\Ga^{d,V}$ with $V\in\P_k$ form a family of small projective
  generators of $\Rep\Ga^d_k$. Now assume $n\ge d$. The decomposition
  \eqref{eq:decomp} then implies that the functors $\Ga^\la$ with
  $\la\in\La(n,d)$ form a family of small projective generators. Thus
  $P=\Ga^{d,k^n}$ is a small projective generator since each $\Ga^\la$
  occurs as a direct summand. It remains to observe that
  $\End_{\Ga^d_k}(P)=S_k(n,d)$ and that $\Hom_{\Ga^d_k}(P,-)$ equals
  evaluation at $k^n$, by Yoneda's lemma.
\end{proof}

\begin{rem}
  Let $n\ge d$. The evaluation functor $\Rep\Ga^d_k\xto{\sim} \Mod
  S_k(n,d)$ restricts to an equivalence $\rep\Ga^d_k\xto{\sim} \mod S_k(n,d)$,
  where $\mod S_k(n,d)$ denotes the full subcategory of modules that
  are finitely generated projective over $k$.
\end{rem}

\begin{rem}
  The tensor product for the category $\Rep\Ga^d_k$ from
  Proposition~\ref{pr:tensor} induces via transport of structure a
  tensor product for the category $\Mod S_k(n,d)$ when $n\ge d$. It
  seems that this tensor product has not been noticed before, despite
  the fact that polynomial representations of the general linear
  groups have been studied for more than a hundred years.
\end{rem}

\subsection*{Representations of  symmetric groups}

Schur--Weyl duality yields a relation between representations of the
general linear groups and representations of the symmetric groups. In
our context this takes the following form. Let $\omega=(1,\ldots,1)$
be a sequence of length $d$. Then $\Ga^{\omega}$ is the functor taking
$V$ to $V^{\otimes d}$ and \[\End_{\Ga^d_k}(\Ga^{\omega})\cong
k\frS_d,\] where $k\frS_d$ denotes the group algebra of the symmetric
group $\frS_d$. This observation gives rise to the functor
$\Hom_{\Ga^d_k}(\Ga^{\omega},-)\colon \Rep \Ga^d_k\to \Mod k\frS_d$,
which appears in Schur's thesis \cite[Sections~III, IV]{Sc1901}; see
also \cite[\S6]{Gr1980}. The functor is not relevant for the rest
of this work, but we mention that it has a fully faithful left adjoint
$\Mod k\frS_d\to\Rep\Ga^d_k$ taking $X\in\Mod k\frS_d$ to the functor
\[V\mapsto X\otimes_{k\frS_d}\Hom_{\Ga^d_k}(\Ga^{d,V},\Ga^{\omega})\]
and a fully faithful right adjoint
$\Mod k\frS_d\to\Rep\Ga^d_k$ taking $X\in\Mod k\frS_d$ to the functor
\[V\mapsto \Hom_{k\frS_d}(\Hom_{\Ga^d_k}(\Ga^{\omega},\Ga^{d,V}), X).\]

\section{Exterior powers and Koszul duality}

Koszul duality expresses the intimate homological relation between symmetric and
exterior powers. For strict polynomial functors, Koszul duality has
been introduced by Cha\l upnik \cite[\S2]{Ch2008} and Touz\'e
\cite[\S3]{To2011} as the derived functor of $\HOM_{\Ga^d_k}(\La^d,-)$. In
this section we treat the non-derived version. A crucial ingredient is
the compatibility of the internal and external tensor products; for this we
follow closely Touz\'e \cite[\S2]{To2010}. For some earlier work
relating symmetric and exterior powers for Schur algebras, we refer to
\cite{Ak1989, AB1988, ABW1982}.

\subsection*{Exterior powers}

Given $V\in\P_k$, let $\La V=\bigoplus_{d\ge 0}\La^d V$ denote the
\emph{exterior algebra}, which is obtained from the tensor algebra
$TV=\bigoplus_{d\ge 0}V^{\otimes d}$ by taking the quotient with
respect to the ideal generated by the elements $v\otimes v$, $v\in
V$. 

For each $d\ge 0$, the $k$-module $\La^dV$ is free provided that $V$
is free; see \cite[III.7.8]{Bo1970}. Thus $\La^d V$ belongs to $\P_k$
for all $V\in\P_k$, and this gives a functor $\Ga^d\P_k\to\P_k$, since
the ideal generated by the elements $v\otimes v$ is invariant under
the action of $\mathfrak S_d$ on $V^{\otimes d}$. There is a natural
isomorphism \[\La^d (V^*)\cong (\La^d V)^*\] induced by
$(f_1\wedge\cdots\wedge f_d)(v_1\wedge\cdots\wedge v_d)=\det
(f_i(v_j))$, and therefore $(\La^d)^\circ\cong\La^d$.

For each $V\in\Ga^d\P_k$, we use the notation
\[\La^{d,V}=\La^d\comp\Hom(V,-)\cong\La^d\otimes_{\Ga^d_k}\Ga^{d,V}\]
and this gives a graded representation
\[\La^V=(\La^{0,V},\La^{1,V},\La^{2,V},\ldots).\] 

Note that multiplication in $\La V$ is graded commutative and that
\[\La V\otimes\La W\cong\La(V\oplus W).\] 
This yields an isomorphism of graded representation
\begin{equation}\label{eq:ext}
  \La^V\otimes\La^W\cong\La^{V\oplus W}.
\end{equation}

\subsection*{Internal versus external tensor product}

We investigate the compatibility of internal and external tensor
product.

\begin{lem}
Let $X_1,X_2\in\Rep\Ga^d_k$ and $Y_1,Y_2\in\Rep\Ga^e_k$. Then there is
a natural morphism
\begin{equation*}\label{eq:phi}
\tau\colon (X_1\otimes_{\Ga^d_k}X_2)\otimes (Y_1\otimes_{\Ga^e_k}Y_2)
\lto (X_1\otimes Y_1)\otimes_{\Ga^{d+e}_k} (X_2\otimes Y_2).
\end{equation*}
\end{lem}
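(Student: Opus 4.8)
The plan is to construct $\tau$ first on representable functors, where the external and internal tensor products are given by explicit formulas, and then to extend to arbitrary representations by writing everything as a colimit of representables.

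First I would record the building blocks. By \eqref{eq:intern1}, for representables $\Ga^{d,V_1},\Ga^{d,V_2}\in\Rep\Ga^d_k$ and $\Ga^{e,W_1},\Ga^{e,W_2}\in\Rep\Ga^e_k$ we have $\Ga^{d,V_1}\otimes_{\Ga^d_k}\Ga^{d,V_2}=\Ga^{d,V_1\otimes V_2}$ and $\Ga^{e,W_1}\otimes_{\Ga^e_k}\Ga^{e,W_2}=\Ga^{e,W_1\otimes W_2}$, while by the definition of the external tensor product $\Ga^{d,A}\otimes\Ga^{e,B}$ is the functor $V\mapsto \Ga^d\Hom(A,V)\otimes\Ga^e\Hom(B,V)$, acting on morphisms through $C^{d,e}$. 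So on the left we get the functor $V\mapsto \Ga^d\Hom(V_1\otimes V_2,V)\otimes\Ga^e\Hom(W_1\otimes W_2,V)$ and on the right $(\Ga^{d,V_1}\otimes\Ga^{e,W_1})\otimes_{\Ga^{d+e}_k}(\Ga^{d,V_2}\otimes\Ga^{e,W_2})$. The key point is that $\Ga^{d,V_1}\otimes\Ga^{e,W_1}$ is a \emph{summand} of the representable $\Ga^{d+e,V_1\oplus W_1}$: indeed the decomposition $\Ga^{d+e,V_1\oplus W_1}=\bigoplus_{i+j=d+e}\Ga^{i,V_1}\otimes\Ga^{j,W_1}$ coming from $\Ga U\otimes\Ga U'\cong\Ga(U\oplus U')$ (the dual of the symmetric-algebra coproduct identity used before \eqref{eq:decomp}) contains the summand with $i=d$, $j=e$. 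Hence one can compute the internal product on the right-hand side using these representables: $\big(\Ga^{d+e,V_1\oplus W_1}\big)\otimes_{\Ga^{d+e}_k}\big(\Ga^{d+e,V_2\oplus W_2}\big)=\Ga^{d+e,(V_1\oplus W_1)\otimes(V_2\oplus W_2)}$, and the distributivity $(V_1\oplus W_1)\otimes(V_2\oplus W_2)\cong (V_1\otimes V_2)\oplus(V_1\otimes W_2)\oplus(W_1\otimes V_2)\oplus(W_1\otimes W_2)$ lets one identify the relevant direct summand. Concretely, I would define $\tau$ on representables as the composite of the canonical map
\[
\Ga^d\Hom(V_1\otimes V_2,V)\otimes\Ga^e\Hom(W_1\otimes W_2,V)\lto \Ga^{d+e}\big(\Hom(V_1\otimes V_2,V)\oplus\Hom(W_1\otimes W_2,V)\big)
\]
(inclusion of a summand in the $\Ga$-of-a-direct-sum decomposition) followed by the map induced by the canonical morphism $\Hom(V_1\otimes V_2,V)\oplus\Hom(W_1\otimes W_2,V)\to \Hom\big((V_1\otimes V_2)\oplus(W_1\otimes W_2),V\big)$ and the restriction along $(V_1\otimes V_2)\oplus(W_1\otimes W_2)\hookrightarrow (V_1\oplus W_1)\otimes(V_2\oplus W_2)$; this lands in $\Ga^{d+e}\Hom\big((V_1\oplus W_1)\otimes(V_2\oplus W_2),V\big)$, which is exactly the $V$-value of the right-hand representable $\Ga^{d+e,(V_1\oplus W_1)\otimes(V_2\oplus W_2)}$. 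One then checks naturality of this assignment in $V_1,V_2,W_1,W_2$ — this amounts to compatibility of the divided-power and $\Hom$ bifunctors with the inclusions $\frS_d\times\frS_e\subseteq\frS_{d+e}$, i.e.\ with the functor $C^{d,e}$, and is a routine diagram chase.

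Next I would extend $\tau$ to general $X_1,X_2,Y_1,Y_2$. Write each $X_i=\colim\Ga^{d,V}$ and $Y_j=\colim\Ga^{e,W}$ as canonical colimits of representables (as in the Yoneda-embedding discussion). Since $-\otimes_{\Ga^d_k}-$, $-\otimes_{\Ga^e_k}-$, $-\otimes_{\Ga^{d+e}_k}-$ all preserve colimits in each variable (they are left adjoints, by Proposition~\ref{pr:tensor}), and the external tensor product $-\otimes-$ is defined pointwise over $k$ and hence also preserves colimits in each variable, both the source and target of $\tau$ are colimits over the evident index category of the corresponding expressions for representables. The maps constructed above on representables are natural, so they assemble into a single morphism between these colimits, giving $\tau$ for arbitrary arguments, and its naturality in $X_1,X_2,Y_1,Y_2$ follows from the functoriality of the colimit construction.

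The main obstacle I anticipate is bookkeeping rather than conceptual: making the identification of the RHS internal product of the summands $\Ga^{d,V_i}\otimes\Ga^{e,W_i}$ with the appropriate summand of $\Ga^{d+e,(V_1\oplus W_1)\otimes(V_2\oplus W_2)}$ genuinely compatible with all the $\frS_d\times\frS_e$-invariance structures, so that $\tau$ is well-defined independently of the chosen presentations and is natural. In other words, one must be careful that the "inclusion of a summand" maps and the distributivity isomorphisms are the ones compatible with the symmetric-group actions implicit in the external tensor product via $C^{d,e}$; once this compatibility is set up on representables, the colimit extension is formal. (I would expect the isomorphism \eqref{eq:ext}, $\La^V\otimes\La^W\cong\La^{V\oplus W}$, and its divided-power analogue to be the precise technical facts being invoked to organize this.)
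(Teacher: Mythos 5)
Your construction is essentially sound and rests on the same two ingredients as the paper's proof: the fact (via \eqref{eq:decomp}, i.e.\ $\Ga(U\oplus U')\cong\Ga U\otimes\Ga U'$) that $\Ga^{d,V}\otimes\Ga^{e,W}$ is canonically a direct summand of $\Ga^{d+e,V\oplus W}$, and extension from representables to arbitrary representations along the canonical colimits, using that all tensor products involved preserve colimits in each variable. The route differs in execution: you resolve all four arguments into representables and write an explicit formula, whereas the paper keeps $X_1$ and $Y_1$ completely general, puts only $X_2=\Ga^{d,V}$, $Y_2=\Ga^{e,W}$, and uses Lemma~\ref{le:representable} to rewrite $X_1\otimes_{\Ga^d_k}\Ga^{d,V}\cong X_1\comp\Hom(V,-)$; the map is then induced by the projections $V\oplus W\to V$, $V\oplus W\to W$, followed by the projection $(X_1\otimes Y_1)\otimes_{\Ga^{d+e}_k}\Ga^{d+e,V\oplus W}\to (X_1\otimes Y_1)\otimes_{\Ga^{d+e}_k}(\Ga^{d,V}\otimes\Ga^{e,W})$. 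The paper's variant buys you a lighter naturality check (only in the two representable slots $V\in\Ga^d\P_k$, $W\in\Ga^e\P_k$) and avoids the four-fold distributivity bookkeeping; yours is more explicit but heavier, and two details in your formula need repair. First, ``restriction along the inclusion $(V_1\otimes V_2)\oplus(W_1\otimes W_2)\hookrightarrow(V_1\oplus W_1)\otimes(V_2\oplus W_2)$'' goes the wrong way for $\Hom(-,U)$: what you want is precomposition with the canonical projection $(V_1\oplus W_1)\otimes(V_2\oplus W_2)\to(V_1\otimes V_2)\oplus(W_1\otimes W_2)$ splitting that summand. Second, your composite as written lands in $\Ga^{d+e,(V_1\oplus W_1)\otimes(V_2\oplus W_2)}$, which is not the stated target but only contains $(\Ga^{d,V_1}\otimes\Ga^{e,W_1})\otimes_{\Ga^{d+e}_k}(\Ga^{d,V_2}\otimes\Ga^{e,W_2})$ as a direct summand; you must still compose with the projection onto that summand (the identification you allude to via distributivity). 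With those corrections, and the naturality in $\Ga^d\P_k$- and $\Ga^e\P_k$-morphisms that you rightly flag as the point where compatibility with $C^{d,e}$ enters, the colimit extension is formal and the argument goes through.
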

\begin{proof}
  Consider first the case that $X_2=\Ga^{d,V}$ and $Y_2=\Ga^{e,W}$ for
  some $V,W\in\P_k$. From the decomposition \eqref{eq:decomp} it
  follows that $\Ga^{d,V}\otimes \Ga^{e,W}$ is canonically a direct
  summand of $\Ga^{d+e,V\oplus W}$.  Also, we use the description of
  $\Ga^{d,V}\otimes_{\Ga^d_k}-$ from Lemma~\ref{le:representable}. The
  projections $V\oplus W\to V$ and $V\oplus W\to W$ induce a
  morphism \[X_1\comp\Hom(V,-)\otimes
  Y_1\comp\Hom(W,-)\lto (X_1\otimes Y_1)\comp\Hom(V\oplus W,-)\] and
  composition with the canonical projection
\[(X_1\otimes Y_1)\comp\Hom(V\oplus W,-)\lto  (X_1\otimes
Y_1)\otimes_{\Ga^{d+e}_k} (\Ga^{d,V}\otimes\Ga^{e,W})\] gives $\tau$
which is natural in $V\in\Ga^d\P_k$ and $W\in\Ga^e\P_k$.
This last observation yields  $\tau$ for general $X_2$ and $Y_2$.
\end{proof}

\begin{lem}\label{le:int-ext}
Let  $X\in\Rep\Ga^d_k$ and $Y\in\Rep\Ga^e_k$. Then the
composite
\[(\La^d\otimes_{\Ga^d_k}X)\otimes (\La^e\otimes_{\Ga^e_k}Y) \xto{\tau}
(\La^d\otimes \La^e)\otimes_{\Ga^{d+e}_k} (X\otimes Y)\to
\La^{d+e}\otimes_{\Ga^{d+e}_k} (X\otimes Y)\] (where the second map is
induced by the multiplication $\La^d\otimes\La^e\to\La^{d+e}$) is a
natural isomorphism.
\end{lem}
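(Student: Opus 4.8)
The plan is to reduce the general statement to the case of representable functors $X=\Ga^{d,V}$ and $Y=\Ga^{e,W}$, and there check the assertion against the classical identity $\La(V\oplus W)\cong\La V\otimes\La W$. Concretely, by Lemma~\ref{le:representable} we have $\La^d\otimes_{\Ga^d_k}\Ga^{d,V}\cong\La^d\comp\Hom(V,-)=\La^{d,V}$ and likewise $\La^e\otimes_{\Ga^e_k}\Ga^{e,W}\cong\La^{e,W}$, while the external tensor product is computed pointwise; so the source of the composite, evaluated at $U\in\P_k$, is $\La^d\Hom(V,U)\otimes\La^e\Hom(W,U)$. On the target side, since $\Ga^{d,V}\otimes\Ga^{e,W}$ is canonically a direct summand of $\Ga^{d+e,V\oplus W}$ via \eqref{eq:decomp}, one gets (again by Lemma~\ref{le:representable}) that $\La^{d+e}\otimes_{\Ga^{d+e}_k}(\Ga^{d,V}\otimes\Ga^{e,W})$ is the corresponding direct summand of $\La^{d+e}\comp\Hom(V\oplus W,-)=\La^{d+e,V\oplus W}$, which evaluated at $U$ is the summand of $\La^{d+e}\Hom(V\oplus W,U)$ cut out by the bidegree-$(d,e)$ component under the decomposition $\Hom(V\oplus W,U)\cong\Hom(V,U)\oplus\Hom(W,U)$. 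Under the isomorphism $\La(A\oplus B)\cong\La A\otimes\La B$ of exterior algebras, that bidegree-$(d,e)$ summand is exactly $\La^d\Hom(V,U)\otimes\La^e\Hom(W,U)$, so the two sides agree.

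The key steps, in order, are: (i) unwind the definition of $\tau$ given in the previous lemma on representables, identifying it with the map induced by the two projections $V\oplus W\to V$, $V\oplus W\to W$ followed by the canonical projection onto the relevant direct summand of $\Ga^{d+e,V\oplus W}$; (ii) transport everything through Lemma~\ref{le:representable} so that the whole composite becomes, pointwise at $U$, the natural map $\La^d\Hom(V,U)\otimes\La^e\Hom(W,U)\to\La^{d+e}\Hom(V\oplus W,U)$ given by multiplication in the exterior algebra of $\Hom(V\oplus W,U)$ after inflating along the projections; (iii) invoke $\La(A\oplus B)\cong\La A\otimes\La B$ — that is, the isomorphism underlying \eqref{eq:ext} — to see this map is an isomorphism onto the correct graded piece, hence the composite in the lemma is an isomorphism for $X,Y$ representable; (iv) bootstrap to arbitrary $X\in\Rep\Ga^d_k$ and $Y\in\Rep\Ga^e_k$ by writing them as colimits of representable functors as in the Yoneda discussion, noting that all the functors in sight — $\La^d\otimes_{\Ga^d_k}-$, $\La^e\otimes_{\Ga^e_k}-$, $-\otimes-$ (external), and $\La^{d+e}\otimes_{\Ga^{d+e}_k}-$ — preserve colimits (the external tensor product commutes with colimits in each variable since it is pointwise, and the internal tensor functors against a fixed object are right exact and colimit-preserving), and that $\tau$ is natural, so that a colimit of isomorphisms is an isomorphism.

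The main obstacle I expect is step (ii): being careful about exactly which direct summand of $\Ga^{d+e,V\oplus W}$ one lands in, and matching it precisely with the bidegree-$(d,e)$ part of $\La(\Hom(V,U)\oplus\Hom(W,U))$ under all the identifications — in other words, checking the commutativity of the diagram relating $\tau$, the multiplication $\La^d\otimes\La^e\to\La^{d+e}$, and the decompositions \eqref{eq:decomp} and \eqref{eq:ext}. This is essentially a compatibility-of-coordinatisations bookkeeping matter rather than a conceptual difficulty; once the representable case is pinned down, the reduction via colimits in step (iv) is routine. It is worth remarking that the same argument, replacing $\La$ by the symmetric algebra $S$ and using $S(A\oplus B)\cong SA\otimes SB$, would give the analogous statement for $S^d\otimes_{\Ga^d_k}-$, and that the graded reformulation is simply that $\La^{(-)}\otimes_{\Ga_k}-$ intertwines the two graded tensor products of the previous subsection.
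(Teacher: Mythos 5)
Your proposal is correct and takes essentially the same route as the paper: reduce to the representable case $X=\Ga^{d,V}$, $Y=\Ga^{e,W}$, use the exponential isomorphism $\La^{V\oplus W}\cong\La^{V}\otimes\La^{W}$ underlying \eqref{eq:ext} to identify the bidegree-$(d,e)$ summands (the same compatibility check the paper itself flags with ``requires checking''), and then pass to arbitrary $X,Y$ by naturality and colimits of representables. Your pointwise evaluation at $U$ is just an unwinding of the paper's graded-representation formulation, so no substantive difference.
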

\begin{proof}
  Consider first the case that $X=\Ga^{d,V}$ and $Y=\Ga^{e,W}$ for
  some $V,W\in\P_k$.  From \eqref{eq:ext} we have an isomorphism of
  graded representations
\begin{align*}
(\La\otimes_{\Ga_k}\Ga^{V})\otimes (\La\otimes_{\Ga_k}\Ga^{W})&\cong
\La^{V}\otimes\La^{W}\\
&\cong \La^{V\oplus W}\\
&\cong\La\otimes_{\Ga_k}\Ga^{V\oplus W}\\
&\cong\La\otimes_{\Ga_k}(\Ga^{V}\otimes\Ga^{W})
\end{align*}
which restricts in degree $d+e$ to an
isomorphism
\[\bigoplus_{i+j=d+e}\La^{i,V}\otimes\La^{j,W}\stackrel{\sim}\lto\La^{d+e}\otimes_{\Ga^{d+e}_k}
\left(\bigoplus_{i'+j'=d+e}\Ga^{i',V}\otimes\Ga^{j',W}\right).\] The
$(i,j)=(d,e)$ summand then maps onto the $(i',j')=(d,e)$ summand
(requires checking). This establishes the isomorphism for the case
$X=\Ga^{d,V}$ and $Y=\Ga^{e,W}$.  Naturality in $V,W$ yields the
general case.
\end{proof}

\begin{rem}
  The multiplication maps $\La^d\otimes \La^e\to\La^{d+e}$ and
  $\La^e\otimes \La^d\to\La^{d+e}$ are equal up to $(-1)^{de}\s$,
  where $\s\colon\La^d\otimes\La^e\xto{\sim}\La^e\otimes\La^d$ permutes the
  factors of the tensor product. This sign comes from the graded
  commutativity of the exterior algebra and appears as well in the
  isomorphism of Lemma~\ref{le:int-ext} when the factors $X$ and $Y$
  are permuted.
\end{rem}

For a sequence $\la=(\la_1,\la_2,\ldots,\la_n)$ of non-negative
integers we write
\[\La^\la=\La^{\la_1}\otimes\cdots\otimes\La^{\la_n}.\]

\begin{prop}\label{pr:int-ext}
For each sequence $\la\in\La(n,d)$ there is an isomorphism
\[\La^\la\cong\La^d\otimes_{\Ga^d_k}\Ga^\la.\]
\end{prop}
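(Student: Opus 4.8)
The plan is to reduce Proposition~\ref{pr:int-ext} to the degree-one case $\La^{1}\cong\La^1\otimes_{\Ga^1_k}\Ga^1$ by an induction on the number $n$ of parts of $\la$, using the compatibility of the internal and external tensor products established in Lemma~\ref{le:int-ext}. First I would dispose of $n=1$: the sequence is $\la=(d)$, so $\Ga^\la=\Ga^{(d)}=\Ga^{d}=\Ga^{d,k}$, and Lemma~\ref{le:representable} gives $\La^d\otimes_{\Ga^d_k}\Ga^{d,k}\cong\La^d\comp\Hom(k,-)\cong\La^d=\La^{(d)}=\La^\la$. (Alternatively one observes $\La^{d,k}=\La^d\otimes_{\Ga^d_k}\Ga^{d,k}$ directly from the notation introduced before \eqref{eq:ext}.)

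For the inductive step, write $\la=(\la_1,\dots,\la_n)$ with $\la'=(\la_1,\dots,\la_{n-1})\in\La(n-1,d')$ where $d'=\sum_{i<n}\la_i$ and set $e=\la_n$, so $d=d'+e$. By definition of the external tensor product we have $\Ga^\la=\Ga^{\la'}\otimes\Ga^{e}$ in $\Rep\Ga^d_k$, where $\Ga^{\la'}\in\Rep\Ga^{d'}_k$ and $\Ga^{e}=\Ga^{e,k}\in\Rep\Ga^{e}_k$, and similarly $\La^\la=\La^{\la'}\otimes\La^{e}$. Now apply Lemma~\ref{le:int-ext} with $X=\Ga^{\la'}$ (of degree $d'$) and $Y=\Ga^{e}=\Ga^{e,k}$ (of degree $e$): it yields a natural isomorphism
\[
(\La^{d'}\otimes_{\Ga^{d'}_k}\Ga^{\la'})\otimes(\La^{e}\otimes_{\Ga^{e}_k}\Ga^{e})
\;\xrightarrow{\ \sim\ }\;
\La^{d}\otimes_{\Ga^{d}_k}(\Ga^{\la'}\otimes\Ga^{e})
=\La^d\otimes_{\Ga^d_k}\Ga^\la.
\]
By the inductive hypothesis the first factor on the left is $\La^{\la'}$, and by the $n=1$ case the second factor is $\La^{e}=\La^{(e)}$; hence the left-hand side is $\La^{\la'}\otimes\La^{e}=\La^\la$, which is exactly the claim.

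The only real point to verify — and the step I expect to be the main obstacle — is bookkeeping the grading: Lemma~\ref{le:int-ext} is stated for a single pair of degrees $(d,e)$, so I must make sure that in the tensor product $\La^{d'}\otimes_{\Ga^{d'}_k}\Ga^{\la'}$, taking $X=\Ga^{\la'}$ concentrated in degree $d'$ genuinely recovers $\La^{\la'}$ (the external tensor product $\La^{\la_1}\otimes\cdots\otimes\La^{\la_{n-1}}$), and likewise that $\La^{e}\otimes_{\Ga^e_k}\Ga^{e,k}$ is $\La^{e}$ as a representation in $\Rep\Ga^e_k$, not merely the ambient graded exterior algebra. This is precisely the content of the degree-$(d+e)$ restriction computation already carried out inside the proof of Lemma~\ref{le:int-ext}, so the needed compatibility is available; one just has to track that the $(i,j)=(d',e)$ summand is the one picked out. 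The signs coming from graded commutativity of $\La$ (noted in the Remark after Lemma~\ref{le:int-ext}) do not affect the statement since we never permute the factors $\La^{\la_i}$. Everything else is a routine, if slightly tedious, unwinding of definitions.
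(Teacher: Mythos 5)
Your argument is correct and is essentially the paper's proof: the paper's proof consists of the single line ``Apply Lemma~\ref{le:int-ext}'', which amounts to exactly your iterated (inductive) use of that lemma together with the observation that $\Ga^{\la_i}=\Ga^{\la_i,k}$ is the unit for the internal tensor product (your base case via Lemma~\ref{le:representable}). You merely spell out the induction and the unit isomorphism that the paper leaves implicit.
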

\begin{proof}
Apply Lemma~\ref{le:int-ext}.
\end{proof}

\subsection*{Koszul duality}

We compute $\La^d\otimes_{\Ga^d_k}-$ and $\HOM_{\Ga^d_k}(\La^d,-)$ using the
relation between exterior and symmetric powers.

\begin{lem}\label{le:pres}
  Given $d\ge 1$ and $V\in\P_k$, there are exact sequences
\begin{gather*}
  \bigoplus_{i=1}^{d-1} V^{\otimes i-1}\otimes\Ga^2
  V\otimes V^{\otimes d-i-1}\xto{1\otimes\Delta\otimes 1}V^{\otimes
    d}\lto\La^d V\lto 0\\
  \bigoplus_{i=1}^{d-1} V^{\otimes i-1}\otimes\La^2
  V\otimes  V^{\otimes d-i-1}\xto{1\otimes\Delta\otimes 1}V^{\otimes
    d}\lto S^d V\lto 0
\end{gather*}
where $\De\colon \Ga^2V\to V\otimes V$ is the component of the
comultiplication which is dual to the multiplication $V^*\otimes
V^*\to S^2(V^*)$, and $\De\colon \La^2V\to V\otimes V$ is the
component of the comultiplication given by $\De(v\wedge w)=v\otimes
w-w\otimes v$.
\end{lem}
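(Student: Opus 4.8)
The plan is to exhibit both exact sequences as the degree-$d$ part of a Koszul-type complex for the symmetric algebra $SV$, respectively for the "anti-Koszul" resolution. More concretely, I would first handle the sequence ending in $\La^d V$. The module $\La^d V$ is by definition the quotient of $V^{\otimes d}$ by the sub-$k$-module generated by tensors with two equal adjacent factors, equivalently by the images of the maps $1^{\otimes i-1}\otimes\mu\otimes 1^{\otimes d-i-1}$ where $\mu\colon V\otimes V\to V\otimes V$ sends $v\otimes w$ to $v\otimes w+w\otimes v$ (in characteristic $2$ one must be slightly careful, which is exactly why the map is phrased through $\Ga^2 V\xto{\De}V\otimes V$ rather than through $S^2 V$). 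So the first step is to check that the image of the displayed map $\bigoplus_i V^{\otimes i-1}\otimes\Ga^2 V\otimes V^{\otimes d-i-1}\to V^{\otimes d}$ is precisely the kernel of the canonical projection $V^{\otimes d}\to\La^d V$; surjectivity of $V^{\otimes d}\to\La^d V$ is immediate from the definition of the exterior algebra as a quotient of $TV$.

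Next I would reduce to the free case. Both $\Ga^2$, $\La^2$, $\La^d$, $S^d$ and tensor powers commute with direct sums in the relevant sense and, more importantly, the formation of each of these functors commutes with arbitrary base change $k\to k'$ on free modules, and every $V\in\P_k$ is locally free; since exactness of a complex of finitely generated projectives can be checked locally (indeed after passing to the localisations at primes, where $V$ becomes free), it suffices to treat $V=k^n$, or even $V$ free of finite rank over an arbitrary commutative ring. For $V$ free one can then argue combinatorially on a basis: choosing a basis $e_1,\dots,e_n$ of $V$, the $k$-module $V^{\otimes d}$ has basis indexed by words $(j_1,\dots,j_d)$, $\La^d V$ has basis the strictly increasing words, and the submodule generated by the images of $1\otimes\De\otimes 1$ is spanned by the "straightening relations" $e_{j_1}\otimes\cdots\otimes(e_a\otimes e_a)\otimes\cdots - \text{(lower terms)}$; a standard straightening-algorithm / filtration argument shows these relations span exactly the kernel. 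Alternatively — and this is cleaner — one invokes classical Koszul duality: $\La^d V$ is $\Tor$ or a term of the Koszul complex $\cdots\to\La^2 V\otimes V^{\otimes d-2}\to V\otimes V^{\otimes d-1}\to V^{\otimes d}$ resolving something, and the two-term presentation is just the tail of that complex; but for the statement as given only the first differential and its cokernel need to be identified, so the bare-hands check is enough.

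The second exact sequence, ending in $S^d V$, is entirely dual to the first. One route: apply the first sequence to $V^*$ in place of $V$ and then dualise, using the identifications $(\La^d V)^*\cong\La^d(V^*)$, $(\Ga^2 V)^*\cong S^2(V^*)$, $(\La^2 V)^*\cong\La^2(V^*)$ and $(S^d V)^*\cong\Ga^d(V^*)$ — wait, that last one would give $\Ga^d$, not what we want, so instead one observes directly that $S^d V$ is the quotient of $V^{\otimes d}$ by the submodule generated by the antisymmetric relations $v\otimes w-w\otimes v$ inserted in adjacent slots, i.e.\ by the images of $1\otimes\De\otimes 1$ with $\De\colon\La^2 V\to V\otimes V$, $v\wedge w\mapsto v\otimes w-w\otimes v$; this is exactly the defining presentation of the symmetric algebra as $TV$ modulo the ideal generated by commutators. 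Again one reduces to $V$ free and checks on a basis that the span of these relations is the kernel of $V^{\otimes d}\twoheadrightarrow S^d V$, the basis of $S^d V$ being the weakly increasing words of length $d$.

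The main obstacle is the characteristic-$2$ (and more generally, the integral) subtlety: one cannot simply say "$\La^d V = V^{\otimes d}/\langle v\otimes v\rangle$ and $S^d V = V^{\otimes d}/\langle v\otimes w - w\otimes v\rangle$" and pass freely between the symmetric and antisymmetric pictures, because over a general commutative $k$ the submodule generated by $v\otimes v$ is \emph{not} the image of a map from $S^2 V\otimes(\text{stuff})$ in an obvious way — this is precisely why the first presentation is forced to route through $\Ga^2 V\xto{\De}V\otimes V$ (the dual of $V^*\otimes V^*\to S^2(V^*)$) rather than through $S^2 V$. So the real content is to verify that the image of $1\otimes\De\otimes 1$, with this specific $\De$, spans the full kernel over $\bbZ$ (hence over any $k$ after base change), and the safest way to do that is the explicit straightening computation with a basis over $\bbZ$, where freeness of all the modules involved makes the kernel and the image both free of computable rank, so that it suffices to check the ranks agree and that the relations are independent — a routine but not entirely trivial bookkeeping exercise that I would not carry out in detail here.
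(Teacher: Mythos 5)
Your overall strategy is sound, and it is worth noting that the paper itself offers no argument here, only a citation to Akin--Buchsbaum--Weyman and Totaro, so a self-contained proof is a legitimate alternative. But as written there is a genuine gap: you yourself identify "the real content" as the verification that the image of $1\otimes\De\otimes 1$ is the whole kernel of $V^{\otimes d}\to\La^d V$, and then you defer it to an unexecuted "straightening computation" whose proposed completion criterion is faulty. Over $\bbZ$, knowing that the image sits inside the kernel, that both are free, and that their ranks agree does not give equality (compare $2\bbZ\subseteq\bbZ$); you would additionally have to control torsion of the quotient, e.g.\ by checking surjectivity after base change to every $\bbF_p$ and to $\mathbb{Q}$. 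Also, your opening reformulation -- that the relations are "equivalently the images of $1\otimes\mu\otimes 1$ with $\mu(v\otimes w)=v\otimes w+w\otimes v$" -- is false whenever $2$ is not invertible, since that image misses $v\otimes v$; your parenthetical acknowledges the issue but the sentence as stated is wrong, and it is precisely the point at stake.

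The fix is much simpler than a straightening argument, and no rank bookkeeping is needed. For the first sequence: $\La V=TV/I$ where $I$ is the two-sided ideal generated by the degree-$2$ submodule $J=\mathrm{span}_k\{v\otimes v : v\in V\}$; since the generators live in degree $2$, the degree-$d$ component of $I$ is exactly $\sum_{i}V^{\otimes i-1}\otimes J\otimes V^{\otimes d-i-1}$. So exactness reduces to the single degree-$2$ identity $J=\mathrm{Im}(\De)=\Ga^2V$ inside $V\otimes V$. For $V$ free with basis $(e_i)$ this is a two-line check: $J$ contains $e_i\otimes e_i$ and $(e_i+e_j)^{\otimes 2}-e_i^{\otimes2}-e_j^{\otimes2}=e_i\otimes e_j+e_j\otimes e_i$, which span $\Ga^2V$, and conversely every $v\otimes v$ is invariant; the projective case follows by writing $V$ as a summand of a free module (or by your localisation remark, provided you also note that $\Ga^2$, $\La^d$, $S^d$ of finitely generated projectives commute with base change). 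For the second sequence your final argument is the right one and is already complete once phrased carefully: with the paper's definition $S^dV=(V^{\otimes d})_{\frS_d}$, the kernel of $V^{\otimes d}\to S^dV$ is spanned by $x-\sigma x$ with $\sigma$ ranging over the adjacent transpositions (these generate $\frS_d$), and for the transposition in slots $(i,i+1)$ such elements are exactly $u\otimes(v\otimes w-w\otimes v)\otimes y$, i.e.\ the image of $1\otimes\De\otimes1$ with $\De\colon\La^2V\to V\otimes V$. With these two observations your proof closes without any basis-straightening or rank comparison.
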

\begin{proof}
See \cite[p.~214--216]{ABW1982} or \cite[p.~6]{To1997}.
\end{proof}

\begin{prop}\label{pr:exterior}
Let $d\ge 0$. Then
\[\La^d\otimes_{\Ga^d_k}\La^d\cong S^d.\]
\end{prop}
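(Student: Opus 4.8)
The plan is to reduce to the case of representable functors and exploit the Koszul presentations from Lemma~\ref{le:pres}. First I would reformulate the left-hand side using the description of the internal tensor product on projectives. By Proposition~\ref{pr:int-ext}, for any $\la\in\La(n,d)$ we have $\La^d\otimes_{\Ga^d_k}\Ga^\la\cong\La^\la$. Since every representable functor $\Ga^{d,k^n}$ decomposes as $\bigoplus_{\la\in\La(n,d)}\Ga^\la$ by \eqref{eq:decomp}, and since $\La^d\otimes_{\Ga^d_k}-$ is right exact and commutes with colimits, it suffices to prove the isomorphism after evaluating both sides on any $V=k^n$, i.e.\ to show $(\La^d\otimes_{\Ga^d_k}\La^d)(k^n)\cong S^d(k^n)$ naturally and compatibly with the $\Ga^d\P_k$-action. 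Applying Lemma~\ref{le:representable} to the outer factor and Proposition~\ref{pr:int-ext} again, $(\La^d\otimes_{\Ga^d_k}\La^d)(V)=\bigl(\La^d\otimes_{\Ga^d_k}(\La^d\circ\Hom(k,-))\bigr)(V)$; unwinding this and using the graded-algebra isomorphism $\La W\otimes\La W'\cong\La(W\oplus W')$ as in the proof of Lemma~\ref{le:int-ext}, the whole computation of $\La^d\otimes_{\Ga^d_k}-$ applied to $\La^d$ amounts to extracting the degree-$d$ part of the total complex built from the exterior-power presentation.

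The technical heart is the following. Lemma~\ref{le:pres} gives, for each $V$, a two-term presentation of $\La^d V$ by tensor powers, with the connecting map $\bigoplus_i 1\otimes\De\otimes 1$ where $\De\colon\La^2V\to V\otimes V$ is $v\wedge w\mapsto v\otimes w-w\otimes v$. Dually it gives a presentation of $S^dV$ with the $\Ga^2$-comultiplication. The point of classical Koszul duality is that these two presentations fit into one acyclic Koszul complex relating $S^\bullet$ and $\La^\bullet$: the Koszul complex $\cdots\to\La^2\otimes S^{d-2}\to\La^1\otimes S^{d-1}\to S^d\to 0$ (and its transpose) is exact. I would therefore compute $\La^d\otimes_{\Ga^d_k}\La^d$ by taking the projective presentation of the \emph{second} copy of $\La^d$ furnished by the first exact sequence of Lemma~\ref{le:pres} — rewritten via \eqref{eq:tensor} and Proposition~\ref{pr:int-ext} as a presentation by representables $\Ga^\omega$ and $\Ga^{\omega'}$ (with one $\Ga^2$ factor) — apply the right-exact functor $\La^d\otimes_{\Ga^d_k}-$, and identify the resulting cokernel. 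Using Proposition~\ref{pr:int-ext}, $\La^d\otimes_{\Ga^d_k}\Ga^\omega\cong\La^\omega=\otimes^d$ and $\La^d\otimes_{\Ga^d_k}\Ga^{(2,1,\dots,1)}\cong\La^2\otimes\otimes^{d-2}$, and the induced map becomes precisely $\bigoplus_i 1\otimes\De\otimes 1$ with the $\La^2$-comultiplication — which is exactly the presentation of $S^d$ in the second exact sequence of Lemma~\ref{le:pres}. Hence the cokernel is $S^d$.

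The main obstacle, and the step deserving the most care, is the bookkeeping that identifies the map $\La^d\otimes_{\Ga^d_k}(1\otimes\De\otimes 1)$ with the map $1\otimes\De\otimes 1$ appearing in the $S^d$-presentation — i.e.\ checking that Koszul duality genuinely interchanges the $\Ga^2$-comultiplication and the $\La^2$-comultiplication, including signs, so that the two presentations in Lemma~\ref{le:pres} are exchanged under $\La^d\otimes_{\Ga^d_k}-$. This is where the sign $(-1)^{de}$ from the graded-commutativity remark enters, and where one must verify that the summand-wise identifications coming from \eqref{eq:decomp} and \eqref{eq:ext} are compatible across all $i$. Once that compatibility is established the conclusion is immediate; the case $d=0$ (and $d=1$) is trivial since both sides are $k$ (resp.\ the identity functor) by \eqref{eq:tensor}.
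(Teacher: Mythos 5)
Your argument is essentially the paper's own proof: you tensor the Koszul presentation of $\La^d$ from Lemma~\ref{le:pres} with $\La^d$, use right exactness together with Proposition~\ref{pr:int-ext} and \eqref{eq:tensor} to identify the terms, and recognise the resulting cokernel presentation as the presentation of $S^d$. The map-identification you single out as the delicate bookkeeping step is precisely the point the paper asserts without further elaboration, so your proposal is correct and follows the same route.
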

\begin{proof}
  Tensoring the projective presentation of $\La^d$ in
  Lemma~\ref{le:pres} with $\La^d$ yields an exact sequence which
  identifies with the presentation of $S^d$ via the isomorphism of
  Proposition~\ref{pr:int-ext}. Here, we use the isomorphism \eqref{eq:tensor}.
\end{proof}

\begin{cor}\label{co:exterior}
For each sequence $\la\in\La(n,d)$ there is an isomorphism
\[\La^d\otimes_{\Ga^d_k}\La^\la\cong S^\la.\]
\end{cor}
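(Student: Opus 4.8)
The plan is to deduce Corollary~\ref{co:exterior} from Proposition~\ref{pr:exterior} by exploiting the interplay between the internal tensor product $-\otimes_{\Ga^d_k}-$ and the external tensor product, together with the factorization of $\La^\la$ provided by Proposition~\ref{pr:int-ext}. The starting point is the observation that, for $\la\in\La(n,d)$, we have $\Ga^\la=\Ga^{\la_1}\otimes\cdots\otimes\Ga^{\la_n}$, and Proposition~\ref{pr:int-ext} gives $\La^\la\cong\La^d\otimes_{\Ga^d_k}\Ga^\la$. So the corollary amounts to computing $\La^d\otimes_{\Ga^d_k}(\La^d\otimes_{\Ga^d_k}\Ga^\la)$.

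First I would reduce, by induction on $n$, to the case of two tensor factors, using the external tensor product to split off $\Ga^{\la_n}$; alternatively one can iterate Lemma~\ref{le:int-ext} directly. The key step is to apply Lemma~\ref{le:int-ext} with the roles of the variable representations played by $\Ga^{\la_i}$: writing $\Ga^\la$ as the external tensor product $\Ga^{\la_1}\otimes\cdots\otimes\Ga^{\la_n}$ (an object of $\Rep\Ga^d_k$ obtained from objects of $\Rep\Ga^{\la_i}_k$), Lemma~\ref{le:int-ext} shows that $\La^d\otimes_{\Ga^d_k}\Ga^\la$ is the external tensor product of the $\La^{\la_i}\otimes_{\Ga^{\la_i}_k}\Ga^{\la_i}$, which by Proposition~\ref{pr:int-ext} recovers $\La^\la$ — this is essentially Proposition~\ref{pr:int-ext} itself. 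Applying $\La^d\otimes_{\Ga^d_k}-$ once more and invoking Lemma~\ref{le:int-ext} in reverse, I get
\[
\La^d\otimes_{\Ga^d_k}\La^\la
\;\cong\;\La^d\otimes_{\Ga^d_k}\bigl(\La^{\la_1}\otimes\cdots\otimes\La^{\la_n}\bigr)
\;\cong\;(\La^{\la_1}\otimes_{\Ga^{\la_1}_k}\La^{\la_1})\otimes\cdots\otimes(\La^{\la_n}\otimes_{\Ga^{\la_n}_k}\La^{\la_n}).
\]
By Proposition~\ref{pr:exterior} applied in each degree $\la_i$, the $i$-th factor is $S^{\la_i}$, and the external tensor product of these is by definition $S^\la=S^{\la_1}\otimes\cdots\otimes S^{\la_n}$. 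This yields the claimed isomorphism.

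The main obstacle is bookkeeping with the external tensor product and Lemma~\ref{le:int-ext}: that lemma is stated for a single pair $X\in\Rep\Ga^d_k$, $Y\in\Rep\Ga^e_k$, so one must iterate it carefully, keeping track of which degrees $d,e$ appear at each stage (here the relevant degrees are partial sums $\la_1+\cdots+\la_i$), and one must check that the multiplication maps $\La^{a}\otimes\La^{b}\to\La^{a+b}$ used in the iteration compose associatively — which they do, since they come from multiplication in the exterior algebra. The sign subtleties noted in the remark after Lemma~\ref{le:int-ext} do not affect the statement, only the identification of the isomorphism, so they can be ignored for the purpose of proving existence. In short: the proof is a formal consequence of Propositions~\ref{pr:int-ext} and~\ref{pr:exterior} via Lemma~\ref{le:int-ext}, with an induction on the length $n$ of $\la$ to handle the multi-factor case.
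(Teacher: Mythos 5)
Your argument is correct and is essentially the paper's own proof: the displayed identity $\La^d\otimes_{\Ga^d_k}(\La^{\la_1}\otimes\cdots\otimes\La^{\la_n})\cong(\La^{\la_1}\otimes_{\Ga^{\la_1}_k}\La^{\la_1})\otimes\cdots\otimes(\La^{\la_n}\otimes_{\Ga^{\la_n}_k}\La^{\la_n})$, obtained by iterating Lemma~\ref{le:int-ext}, followed by Proposition~\ref{pr:exterior} in each degree $\la_i$, is exactly the paper's computation. The opening detour through Proposition~\ref{pr:int-ext} is harmless but not needed, and your remarks on iteration and signs only spell out details the paper leaves implicit.
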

\begin{proof}
  Combine Lemma~\ref{le:int-ext} and Proposition~\ref{pr:exterior}. To
  be explicit, we have
\[\La^d\otimes_{\Ga^d_k}(\La^{\la_1}\otimes\cdots\otimes\La^{\la_n})\cong 
(\La^{\la_1}\otimes_{\Ga^{\la_1}_k}\La^{\la_1})\otimes\cdots\otimes
(\La^{\la_n}\otimes_{\Ga^{\la_n}_k}\La^{\la_n})\cong
S^\la.\qedhere\]
\end{proof}

For a set $\C$ of objects of an additive category, we denote by
$\add\C$ the full subcategory consisting of all finite direct sums of
objects in $\C$ and their direct summands.

\begin{cor}\label{co:equiv}
  Let $d,n\ge 1$.  The functor $\La^d\otimes_{\Ga^d_k}-$ induces
  equivalences
\begin{align*}
\add\{\Ga^\la\mid\la\in\La(n,d)\}&\stackrel{\sim}\lto
\add\{\La^\la\mid\la\in\La(n,d)\}\\
\add\{\La^\la\mid\la\in\La(n,d)\}&\stackrel{\sim}\lto
\add\{S^\la\mid\la\in\La(n,d)\}
\end{align*}
with quasi-inverses induced by $\HOM_{\Ga^d_k}(\La^d,-)$.
\end{cor}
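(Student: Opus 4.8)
The plan is to deduce both equivalences from the already-established isomorphisms \`\`$\La^d\otimes_{\Ga^d_k}\Ga^\la\cong\La^\la$'' (Proposition~\ref{pr:int-ext}) and \`\`$\La^d\otimes_{\Ga^d_k}\La^\la\cong S^\la$'' (Corollary~\ref{co:exterior}), together with the adjunction $\Hom_{\Ga^d_k}(X\otimes_{\Ga^d_k}Y,Z)\cong\Hom_{\Ga^d_k}(X,\HOM_{\Ga^d_k}(Y,Z))$ of Proposition~\ref{pr:tensor}. First I would observe that $\La^d\otimes_{\Ga^d_k}-$ is an additive functor, so it sends $\add\{\Ga^\la\mid\la\in\La(n,d)\}$ into $\add\{\La^\la\}$ and $\add\{\La^\la\}$ into $\add\{S^\la\}$ by the two displayed isomorphisms; likewise $\HOM_{\Ga^d_k}(\La^d,-)$ is additive. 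It therefore suffices to show that, restricted to these additive subcategories, the two functors are mutually quasi-inverse, and for that it is enough (by additivity) to check the unit and counit are isomorphisms on the generating objects $\Ga^\la$, $\La^\la$, $S^\la$.

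Next I would make the key computation that $\HOM_{\Ga^d_k}(\La^d,-)$ undoes $\La^d\otimes_{\Ga^d_k}-$ on these objects. Since $\La^d$ is finite (hence $\La^d\cong(\La^d)^{\circ\circ}$ and $(\La^d)^\circ\cong\La^d$), Lemma~\ref{le:dual} gives a natural isomorphism $\HOM_{\Ga^d_k}(\La^d,Y)\cong(\La^d\otimes_{\Ga^d_k}Y^\circ)^\circ$. Applying this with $Y=\La^\la$ and using $(\La^\la)^\circ\cong\La^\la$ (from $(\La^d)^\circ\cong\La^d$ and compatibility of $(-)^\circ$ with the external tensor product) together with Corollary~\ref{co:exterior} yields $\HOM_{\Ga^d_k}(\La^d,\La^\la)\cong(S^\la)^\circ\cong\Ga^\la$, and similarly $\HOM_{\Ga^d_k}(\La^d,S^\la)\cong(\La^d\otimes_{\Ga^d_k}\Ga^\la)^\circ\cong(\La^\la)^\circ\cong\La^\la$, invoking Proposition~\ref{pr:int-ext}. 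Thus $\HOM_{\Ga^d_k}(\La^d,-)$ carries $\add\{S^\la\}$ back to $\add\{\La^\la\}$ and $\add\{\La^\la\}$ back to $\add\{\Ga^\la\}$.

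Finally I would verify that these identifications are compatible with the adjunction unit and counit, so that we get genuine equivalences and not merely a bijection on objects. The cleanest route is: by the adjunction of Proposition~\ref{pr:tensor}, $\La^d\otimes_{\Ga^d_k}-$ is left adjoint to $\HOM_{\Ga^d_k}(\La^d,-)$ on all of $\Rep\Ga^d_k$; I restrict both to the relevant additive subcategories (legitimate by the previous two paragraphs) and then check that the unit $X\to\HOM_{\Ga^d_k}(\La^d,\La^d\otimes_{\Ga^d_k}X)$ and counit $\La^d\otimes_{\Ga^d_k}\HOM_{\Ga^d_k}(\La^d,Y)\to Y$ are isomorphisms when $X=\Ga^\la$ and $Y=\La^\la$ (resp.\ $X=\La^\la$, $Y=S^\la$). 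Here the point is that both sides of the unit (resp.\ counit) are abstractly isomorphic — source and target are finitely generated projective with identified isomorphism classes — and the unit/counit is a natural transformation, which one traces through Lemma~\ref{le:representable} and the explicit isomorphisms of Lemma~\ref{le:int-ext} and Proposition~\ref{pr:exterior}; I expect this bookkeeping of unit/counit maps against the concrete Koszul-complex isomorphism of Proposition~\ref{pr:exterior} to be the main obstacle, since one must confirm that the natural map, not merely some abstract isomorphism, is invertible. Once the unit and counit are isomorphisms on generators, additivity propagates this to all of $\add$, giving the two stated equivalences with the claimed quasi-inverses.
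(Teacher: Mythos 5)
Your argument is essentially the paper's own proof: it deduces the corollary from Proposition~\ref{pr:int-ext} and Corollary~\ref{co:exterior} together with the dual relation $\HOM_{\Ga^d_k}(\La^d,X)^\circ\cong\La^d\otimes_{\Ga^d_k}X^\circ$ of Lemma~\ref{le:dual} and the identities $(S^\la)^\circ\cong\Ga^\la$, $(\La^\la)^\circ\cong\La^\la$, exactly as in the text. The unit/counit bookkeeping you single out as the main remaining obstacle is left implicit in the paper as well, and it can be streamlined by invoking the natural isomorphism of Lemma~\ref{le:tensor-hom} as in the proof of Theorem~\ref{th:duality}, which gives $\HOM_{\Ga^d_k}(\La^d,\La^d\otimes_{\Ga^d_k}X)\cong\HOM_{\Ga^d_k}(\La^d,\La^d)\otimes_{\Ga^d_k}X\cong\Ga^d\otimes_{\Ga^d_k}X\cong X$ naturally for finitely generated projective $X$, rather than tracing the adjunction unit against the explicit Koszul-complex isomorphisms.
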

\begin{proof}
From Lemma~\ref{le:dual} we have for $X\in\Rep\Ga^d_k$ 
\[\HOM_{\Ga^d_k}(\La^d,X)^\circ\cong \La^d\otimes_{\Ga^d_k} X^\circ.\]
The assertion then follows from Proposition~\ref{pr:int-ext} and
Corollary~\ref{co:exterior}, using that $(S^\la)^\circ\cong\Ga^\la$
and $(\La^\la)^\circ\cong\La^\la$ for each sequence $\la$.
\end{proof}

\section{Derived Koszul duality}

In this section we establish the Koszul duality for strict polynomial
functors, working in the unbounded derived category and over an
arbitrary ring of coefficients.  We refer to
\cite[Corollary~2.3]{Ch2008} and \cite[Theorem~3.4]{To2011} for the
corresponding results of Cha\l upnik and Touz\'e.

\subsection*{The derived category of strict polynomial functors}

Let $\D(\Rep\Ga^d_k)$ denote the unbounded derived category of the
abelian category $\Rep\Ga^d_k$. The objects are $\bbZ$-graded
complexes with differential of degree $+1$ and the morphisms are chain
maps with all quasi-isomorphisms inverted.  This is a triangulated
category which admits set-indexed products and coproducts.  The set of
morphisms between two complexes $X,Y$ in $\D(\Rep\Ga^d_k)$ is denoted
by $\Hom_{\D(\Ga^d_k)}(X,Y)$.  An object in $\D(\Rep\Ga^d_k)$ is called
\emph{perfect} if it is isomorphic to a bounded complex of finitely
generated projective objects.

\subsection*{Derived functors}
We construct the derived functors of the internal tensor product and
the internal Hom functor. 

\begin{prop}
  The bifunctors $-\otimes_{\Ga^d_k}-$ and $\HOM_{\Ga^d_k}(-,-)$ for
  $\Rep\Ga^d_k$ have derived functors
\begin{align*}
-\lotimes_{\Ga^d_k}-&\colon\D(\Rep\Ga^d_k)\times
\D(\Rep\Ga^d_k)\lto\D(\Rep\Ga^d_k)\\
\RHOM_{\Ga^d_k}(-,-)&\colon\D(\Rep\Ga^d_k)^\op\times\D(\Rep\Ga^d_k)\lto\D(\Rep\Ga^d_k)
\end{align*}
with a natural isomorphism
\[\Hom_{\D(\Ga^d_k)}(X\lotimes_{\Ga^d_k}
Y,Z)\cong\Hom_{\D(\Ga^d_k)}(X,\RHOM_{\Ga^d_k}(Y,Z)).\] 
\end{prop}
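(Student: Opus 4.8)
The plan is to construct the derived bifunctors by means of projective resolutions, using the good homological behaviour of the representable functors $\Ga^{d,V}$ established in the earlier sections. First I would fix, for each object of $\D(\Rep\Ga^d_k)$, a K-projective resolution in the sense of Spaltenstein: since $\Rep\Ga^d_k$ is a Grothendieck abelian category with a generating set of projectives (the $\Ga^{d,V}$, $V\in\Ga^d\P_k$), every complex admits a K-projective resolution built from coproducts of representables, and such resolutions are unique up to homotopy. This gives functorial replacements $\mathbf{p}X\xto{\sim}X$ with $\mathbf{p}X$ a complex of projective objects that is K-projective.

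\medskip

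Next I would \emph{define} the derived functors on these replacements:
\[
X\lotimes_{\Ga^d_k}Y := \mathbf{p}X\otimes_{\Ga^d_k}\mathbf{p}Y,
\qquad
\RHOM_{\Ga^d_k}(Y,Z) := \HOM_{\Ga^d_k}(\mathbf{p}Y,\mathbf{i}Z),
\]
where $\mathbf{i}Z\xto{\sim}Z$ is a K-injective resolution (which also exists, the category being Grothendieck). The key point that makes this well defined is that $\Ga^{d,V}\otimes_{\Ga^d_k}-$ and $\HOM_{\Ga^d_k}(\Ga^{d,V},-)$ are exact and preserve coproducts --- this is immediate from Lemma~\ref{le:representable}, since they are given by precomposition with $\Hom(V,-)$ respectively $V\otimes-$, both exact operations on $\P_k$ computed pointwise. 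Hence a quasi-isomorphism between complexes of projectives tensored (or Hom'd) with a K-projective complex of projectives stays a quasi-isomorphism, so the construction does not depend on the chosen resolutions up to canonical isomorphism, and it descends to a well-defined triangulated bifunctor in each variable. Exactness in each variable also shows the derived functors are triangulated.

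\medskip

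For the adjunction, I would transport the non-derived adjunction of Proposition~\ref{pr:tensor} along the resolutions. Concretely, for K-projective $P=\mathbf{p}X$, K-projective $Q=\mathbf{p}Y$, and K-injective $I=\mathbf{i}Z$, one has on the level of chain complexes the adjunction isomorphism
\[
\Hom_{\Ga^d_k}(P\otimes_{\Ga^d_k}Q,\,I)\cong\Hom_{\Ga^d_k}(P,\,\HOM_{\Ga^d_k}(Q,I))
\]
coming pointwise from Proposition~\ref{pr:tensor}; passing to the homotopy category and using that $P\otimes_{\Ga^d_k}Q$ is K-projective (a coproduct/colimit of representables, each $\otimes_{\Ga^d_k}$-ing to representables) while $\HOM_{\Ga^d_k}(Q,I)$ is K-injective, the displayed $\Hom$ of complexes computes the respective $\Hom$ groups in $\D(\Rep\Ga^d_k)$. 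This yields the claimed natural isomorphism.

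\medskip

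The main obstacle is the existence and good behaviour of the resolutions in the \emph{unbounded} setting: one must know that $\Rep\Ga^d_k$ has enough K-projectives (not just enough projectives), that K-projective objects are stable under the relevant colimits, and that the tensor product of a K-projective complex with an acyclic complex of projectives is acyclic. These are standard facts for module categories and more generally for Grothendieck categories with a projective generating set (Spaltenstein, Serpé, and in the functor-category language Lurie), so I would invoke them rather than reprove them; the only point requiring a small argument specific to this setting is that $\Ga^{d,V}\otimes_{\Ga^d_k}-$ sends K-projectives to K-projectives, which follows from Lemma~\ref{le:representable} together with the explicit colimit formula for $\otimes_{\Ga^d_k}$ in Proposition~\ref{pr:tensor}.
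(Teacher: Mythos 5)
Your proposal is correct and takes essentially the same route as the paper: existence of K-projective and K-injective resolutions in the sense of Spaltenstein for the Grothendieck category $\Rep\Ga^d_k$ (which has the projective generators $\Ga^{d,V}$), definition of the derived functors on these replacements, and descent of the adjunction of Proposition~\ref{pr:tensor}. The paper packages this a bit more economically as a composite of the adjoint pairs $\mathbf p\dashv Q\dashv \mathbf i$ with the homotopy-level adjunction $X\otimes_{\Ga^d_k}-\dashv\HOM_{\Ga^d_k}(X,-)$, which also makes your auxiliary claims that $\mathbf pX\otimes_{\Ga^d_k}\mathbf pY$ is K-projective and $\HOM_{\Ga^d_k}(\mathbf pY,\mathbf iZ)$ is K-injective unnecessary: K-projectivity of $\mathbf pX$, $\mathbf pY$ and K-injectivity of $\mathbf iZ$ already identify the relevant homotopy-category Hom groups with those in $\D(\Rep\Ga^d_k)$.
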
 

To compute these functors, one chooses for complexes $X,Y$ in
$\D(\Rep\Ga^d_k)$ a K-projective resolution $\mathbf p Y\to Y$ and a
K-injective resolution $Y\to \mathbf i Y$ (in the sense of \cite{Sp}).
Then one  gets
\begin{align*}
X\lotimes_{\Ga^d_k} Y&= X\otimes_{\Ga^d_k} \mathbf p Y\\
\RHOM_{\Ga^d_k}(X,Y)&=\HOM_{\Ga^d_k}(X,\mathbf i Y).
\end{align*}
This involves  total complexes which are defined in degree $n$ by
\begin{align*}
(X\otimes_{\Ga^d_k} Y)^n&=\bigoplus_{p+q=n} X^p\otimes_{\Ga^d_k} Y^q\\
\HOM_{\Ga^d_k}(X,Y)^n&=\prod_{p+q=n} \HOM_{\Ga^d_k}(X^{-p},Y^q).
\end{align*}
Note that  $\mathbf p Y=Y$ when $Y$ is a bounded  above complex of projective
objects. Also, we have in $\D(\Rep\Ga^d_k)$
\[\HOM_{\Ga^d_k}(\mathbf p X,Y)\xto{\sim}\HOM_{\Ga^d_k}(\mathbf p
X,\mathbf i Y)
\xleftarrow{\sim}\HOM_{\Ga^d_k}(X,\mathbf i Y).
\]
\begin{proof}
  Let $\K(\Rep\Ga^d_k)$ denote the homotopy category of $\Rep\Ga^d_k$
  and consider the quotient functor $Q\colon \K(\Rep\Ga^d_k)\to
  \D(\Rep\Ga^d_k)$ that inverts all quasi-isomorphisms. This functor
  has a left adjoint $\mathbf p$ (taking a complex to its K-projective
  resolution) and a right adjoint $\mathbf i$ (taking a complex to its
  K-injective resolution).

  Given $X$ in $\D(\Rep\Ga^d_k)$, one obtains the following pairs of
  adjoint functors.
\[\xymatrix{\D(\Rep\Ga^d_k)\ar@<0.75ex>[r]^-{\mathbf p}&
  \K(\Rep\Ga^d_k)\ar@<0.75ex>[l]^-{Q}\ar@<0.75ex>[rr]^-{ X\otimes_{\Ga^d_k}-}&&
  \K(\Rep\Ga^d_k)\ar@<0.75ex>[ll]^-{\HOM_{\Ga^d_k}(X,-)}\ar@<0.75ex>[r]^-{Q}&\D(\Rep\Ga^d_k)
  \ar@<0.75ex>[l]^-{\mathbf i} }
\]
The composite from left to right gives the derived functor
$X\lotimes_{\Ga^d_k}-$, while the composite from right to left gives
the derived functor $\RHOM_{\Ga^d_k}(X,-)$.
\end{proof}

\begin{lem}\label{le:per}
Let $P\in\D(\Rep\Ga^d_k)$ be perfect. Then $P\lotimes_{\Ga^d_k} -$ and
$\RHOM_{\Ga^d_k}(P,-)$ preserve set-indexed products and coproducts.
\end{lem}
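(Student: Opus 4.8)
\textbf{Proof plan for Lemma~\ref{le:per}.}
The plan is to reduce to the case $P=\Ga^{d,V}$ for a single $V\in\Ga^d\P_k$ and then invoke the explicit description of $\Ga^{d,V}\otimes_{\Ga^d_k}-$ and $\HOM_{\Ga^d_k}(\Ga^{d,V},-)$ from Lemma~\ref{le:representable}. First I would observe that the class of objects $P\in\D(\Rep\Ga^d_k)$ for which both $P\lotimes_{\Ga^d_k}-$ and $\RHOM_{\Ga^d_k}(P,-)$ commute with set-indexed products and coproducts is closed under shifts, finite direct sums, direct summands, and cones (the latter because products and coproducts of triangles are triangles and the five lemma applies in a triangulated category). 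Since a perfect complex is by definition a bounded complex of finitely generated projective objects, and by Proposition~\ref{pr:proj} every finitely generated projective object lies in $\add\{\Ga^\la\}$, an induction on the length of the complex reduces the statement to $P=\Ga^{d,V}$ with $V=k^n$, hence (after passing to summands) to $P=\Ga^\la$, but it is cleanest to just keep $P=\Ga^{d,V}$.

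For $P=\Ga^{d,V}$, Lemma~\ref{le:representable} gives natural isomorphisms $\Ga^{d,V}\otimes_{\Ga^d_k}X\cong X\comp\Hom(V,-)$ and $\HOM_{\Ga^d_k}(\Ga^{d,V},X)\cong X\comp(V\otimes-)$. On the level of complexes these are computed degreewise, and precomposition with a fixed functor $\Ga^d\P_k\to\Ga^d\P_k$ is an exact functor that commutes with all (co)limits computed pointwise over $k$; in particular it preserves quasi-isomorphisms and set-indexed products and coproducts of complexes. Since $\mathbf p(-)$ may be taken to commute with coproducts (a coproduct of K-projective resolutions is a K-projective resolution of the coproduct), the derived functor $\Ga^{d,V}\lotimes_{\Ga^d_k}-$ is identified with the exact functor $(-)\comp\Hom(V,-)$ on $\D(\Rep\Ga^d_k)$, which manifestly preserves products and coproducts; similarly $\RHOM_{\Ga^d_k}(\Ga^{d,V},-)$ is identified with $(-)\comp(V\otimes-)$. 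This disposes of the representable case.

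The main obstacle is the bookkeeping around K-injective resolutions: a priori $\RHOM_{\Ga^d_k}(P,Y)=\HOM_{\Ga^d_k}(P,\mathbf i Y)$, and $\mathbf i$ does not commute with coproducts, so one cannot argue naively that $\RHOM_{\Ga^d_k}(P,-)$ preserves coproducts. The way around this — and the one genuine point of the lemma — is to use that for \emph{perfect} $P$ one has the alternative formula $\RHOM_{\Ga^d_k}(P,Y)\cong\HOM_{\Ga^d_k}(P,Y)$ computed with the unresolved $Y$ (equivalently, $\RHOM_{\Ga^d_k}(P,-)\cong\HOM_{\Ga^d_k}(\mathbf p P,-)$, and $\mathbf p P=P$ since $P$ is a bounded complex of projectives): indeed for $P$ a bounded complex of finitely generated projectives the natural map $\HOM_{\Ga^d_k}(P,Y)\to\HOM_{\Ga^d_k}(P,\mathbf i Y)$ is a quasi-isomorphism, which follows from the representable case above by the same dévissage on $P$ (both sides are cohomological in $P$, agree on $\Ga^{d,V}$, and are closed under cones and summands). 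Once $\RHOM_{\Ga^d_k}(P,-)$ is presented as $\HOM_{\Ga^d_k}(P,-)$ on honest complexes with $P$ bounded and each $P^i$ finitely generated projective, preservation of products is automatic and preservation of coproducts follows because a finite product of coproducts is the coproduct of the products — concretely, $\HOM_{\Ga^d_k}(P^{-p},\coprod_\alpha Y_\alpha^q)\cong\coprod_\alpha\HOM_{\Ga^d_k}(P^{-p},Y_\alpha^q)$ since $P^{-p}$ is finitely generated projective (again reducing to $\Ga^{d,V}$ and Lemma~\ref{le:representable}), and the total complex involves only finite sums $\prod_{p+q=n}$ in each degree.
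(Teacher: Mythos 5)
Your proof is correct and follows essentially the same route as the paper: a d\'evissage showing that the objects $P$ with the stated property form a triangulated subcategory (closed under shifts, cones, summands) containing the finitely generated projectives, whose case is settled by the explicit description of $\Ga^{d,V}\otimes_{\Ga^d_k}-$ and $\HOM_{\Ga^d_k}(\Ga^{d,V},-)$ as exact precomposition functors from Lemma~\ref{le:representable}. The extra care you take with $\mathbf p$ and $\mathbf i$ (replacing $\RHOM_{\Ga^d_k}(P,-)$ by $\HOM_{\Ga^d_k}(P,-)$ for $P$ a bounded complex of projectives) is exactly the point the paper leaves implicit, so nothing is missing.
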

\begin{proof}
  The objects $X\in\D(\Rep\Ga^d_k)$ such that $X\lotimes_{\Ga^d_k} -$
  and $\RHOM_{\Ga^d_k}(X,-)$ preserve set-indexed products and
  coproducts form a triangulated subcategory of $\D(\Rep\Ga^d_k)$
  which contains all finitely generated projective objects when viewed
  as complexes concentrated in degree zero. It follows that each
  perfect object belongs to this subcategory.
\end{proof}

\subsection*{Duality}
The duality taking $X\in \Rep\Ga^d_k$ to $X^\circ$ has a derived
functor.  For a complex $X=(X^n,d_X^n)$ we
define its \emph{dual} $X^\circ$ by
\[(X^\circ)^n=(X^{-n})^\circ\quad\text{and}\quad d^n_{X^\circ}=(-1)^{n+1} (d_X^{-n-1})^\circ\]
and its
\emph{derived dual} by \[X^\diamond =(\mathbf p X)^\circ.\] 
Note that
\[\HOM_{\Ga^d_k}(X,S^d)\cong X^\diamond.\]

\begin{lem}\label{le:der-dual}
Given $X,Y\in\D(\Rep\Ga^d_k)$, there is a natural isomorphism 
\[\Hom_{\D(\Ga^d_k)}(X,Y^\diamond)\cong\Hom_{\D(\Ga^d_k)}(Y,X^\diamond).\]
\end{lem}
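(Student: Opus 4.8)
The statement to prove is that for $X,Y\in\D(\Rep\Ga^d_k)$ there is a natural isomorphism $\Hom_{\D(\Ga^d_k)}(X,Y^\diamond)\cong\Hom_{\D(\Ga^d_k)}(Y,X^\diamond)$. The plan is to mimic, at the derived level, the proof of Lemma~\ref{le:dual1}, replacing the Yoneda-lemma argument by a "generator" argument in the triangulated category. First I would record the two formal ingredients we already have: the adjunction $\Hom_{\D(\Ga^d_k)}(X\lotimes_{\Ga^d_k}Y,Z)\cong\Hom_{\D(\Ga^d_k)}(X,\RHOM_{\Ga^d_k}(Y,Z))$, and the identification $Y^\diamond=\RHOM_{\Ga^d_k}(Y,S^d)$ (which follows from $\HOM_{\Ga^d_k}(Y,S^d)\cong Y^\diamond$ for $Y=\mathbf p Y$ a K-projective complex, together with $X\lotimes_{\Ga^d_k}-$ and $\RHOM_{\Ga^d_k}$ being defined via such resolutions). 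Granting these, for any $X,Y$ one computes
\[
\Hom_{\D(\Ga^d_k)}(X,Y^\diamond)\cong\Hom_{\D(\Ga^d_k)}(X,\RHOM_{\Ga^d_k}(Y,S^d))\cong\Hom_{\D(\Ga^d_k)}(X\lotimes_{\Ga^d_k}Y,S^d),
\]
and the right-hand side is visibly symmetric in $X$ and $Y$ once we know $X\lotimes_{\Ga^d_k}Y\cong Y\lotimes_{\Ga^d_k}X$.

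The work therefore reduces to two points: (i) the commutativity isomorphism $X\lotimes_{\Ga^d_k}Y\cong Y\lotimes_{\Ga^d_k}X$ in $\D(\Rep\Ga^d_k)$, natural in both variables, and (ii) the identification of $\RHOM_{\Ga^d_k}(Y,S^d)$ with $Y^\diamond$. For (ii) one uses that $S^d=(\Ga^d)^\circ$ is injective in $\Rep\Ga^d_k$ — indeed $\HOM_{\Ga^d_k}(-,S^d)$ is exact since by Lemma~\ref{le:dual} it is $(-\otimes_{\Ga^d_k}\Ga^d)^\circ\cong(-)^\circ$, i.e. Kuhn duality, which is exact — so $\RHOM_{\Ga^d_k}(Y,S^d)$ may be computed by applying $\HOM_{\Ga^d_k}(-,S^d)$ directly to a K-projective resolution $\mathbf p Y$, giving $(\mathbf p Y)^\circ=Y^\diamond$ after checking the sign conventions in the definition of the dual complex match those of $\HOM_{\Ga^d_k}(-,S^d)^n=\prod_{p+q=n}\HOM_{\Ga^d_k}((\mathbf pY)^{-p},S^d)$ concentrated appropriately. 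For (i), the underived internal tensor product is symmetric because it is Day convolution along the symmetric monoidal structure on $\Ga^d\P_k$ (the tensor product $-\otimes-$ on $\P_k$ is symmetric, and this passes to $\Ga^d\P_k$ via \eqref{eq:intern1}); concretely $\Ga^{d,V}\otimes_{\Ga^d_k}\Ga^{d,W}=\Ga^{d,V\otimes W}\cong\Ga^{d,W\otimes V}=\Ga^{d,W}\otimes_{\Ga^d_k}\Ga^{d,V}$, and this extends to all $X,Y$ by writing them as colimits of representables. One then derives this symmetry: take a K-projective resolution $\mathbf p Y\to Y$, so $X\lotimes_{\Ga^d_k}Y=X\otimes_{\Ga^d_k}\mathbf pY\cong\mathbf pY\otimes_{\Ga^d_k}X$, and since $\mathbf pY$ is K-projective the last term computes $Y\lotimes_{\Ga^d_k}X$; alternatively, pick K-projective resolutions of both arguments and use that the total complex of the double complex $\mathbf pX\otimes_{\Ga^d_k}\mathbf pY$ is symmetric up to the Koszul sign $(-1)^{pq}$, which is a standard homotopy equivalence.

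The main obstacle I expect is not any of the formal manipulations but the bookkeeping of signs: one must verify that the sign in the definition of the dual complex, $d^n_{X^\circ}=(-1)^{n+1}(d_X^{-n-1})^\circ$, is precisely the one produced when $\HOM_{\Ga^d_k}(-,S^d)$ is applied to a complex using the total-complex sign rule $\HOM_{\Ga^d_k}(X,Y)^n=\prod_{p+q=n}\HOM_{\Ga^d_k}(X^{-p},Y^q)$ with $Y=S^d$ concentrated in degree zero — and, in the commutativity isomorphism, that the Koszul sign $(-1)^{pq}$ on $X^p\otimes_{\Ga^d_k}Y^q$ is compatible on both sides. These are routine but must be done consistently; everything else is a direct chain of adjunction isomorphisms. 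Once (i) and (ii) are in place the proof is the three-line computation above followed by the symmetry $X\lotimes_{\Ga^d_k}Y\cong Y\lotimes_{\Ga^d_k}X$, with naturality inherited termwise from the naturality of each isomorphism used.
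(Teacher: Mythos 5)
Your argument is correct in substance, but it is a genuinely different route from the paper's. The paper proves the lemma purely formally, without invoking the monoidal structure at all: the self-adjointness of the Kuhn dual $(-)^\circ$ from \eqref{eq:dual} is extended termwise to complexes, giving a self-adjoint functor $(-)^\circ\colon\K(\Rep\Ga^d_k)^\op\to\K(\Rep\Ga^d_k)$, and this adjunction is composed with the adjunction between $\mathbf p$ and the quotient functor $Q$; unwinding, $\Hom_{\D(\Ga^d_k)}(X,Y^\diamond)\cong\Hom_{\K(\Ga^d_k)}(\mathbf p X,(\mathbf p Y)^\circ)\cong\Hom_{\K(\Ga^d_k)}(\mathbf p Y,(\mathbf p X)^\circ)\cong\Hom_{\D(\Ga^d_k)}(Y,X^\diamond)$. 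You instead pass through the closed monoidal structure: the identification $Y^\diamond\cong\RHOM_{\Ga^d_k}(Y,S^d)$, the $\lotimes_{\Ga^d_k}$/$\RHOM_{\Ga^d_k}$ adjunction, and the commutativity of $\lotimes_{\Ga^d_k}$, which the paper never states but which does follow from Day convolution over the symmetric category $\Ga^d\P_k$ as you say (for the derived version note that K-projective complexes are K-flat here, since the representables $\Ga^{d,V}$ are tensor-flat by Lemma~\ref{le:representable}). The paper's route is shorter and needs fewer inputs; yours buys the pleasant symmetric reformulation $\Hom_{\D(\Ga^d_k)}(X,Y^\diamond)\cong\Hom_{\D(\Ga^d_k)}(X\lotimes_{\Ga^d_k}Y,S^d)$.

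One justification in your step (ii) is wrong as stated, though harmlessly so. Over an arbitrary commutative ring $k$ the Kuhn dual $(-)^\circ$ is not exact ($\Hom_k(-,k)$ on all $k$-modules is only left exact unless $k$ is self-injective), so you cannot conclude that $S^d$ is injective in $\Rep\Ga^d_k$; moreover Lemma~\ref{le:dual} requires the first argument to be finitely presented, so the identification $\HOM_{\Ga^d_k}(-,S^d)\cong(-)^\circ$ for arbitrary arguments should be drawn from Lemma~\ref{le:dual1} with $Y=\Ga^d$ the tensor unit. Fortunately injectivity of $S^d$ is not needed: since $\mathbf p Y$ is K-projective, $\HOM_{\Ga^d_k}(\mathbf p Y,-)$ preserves quasi-isomorphisms, so $\RHOM_{\Ga^d_k}(Y,S^d)\cong\HOM_{\Ga^d_k}(\mathbf p Y,S^d)\cong(\mathbf p Y)^\circ=Y^\diamond$, exactly as the paper records after constructing the derived functors. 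With that repair, and the sign bookkeeping you already flag, your proof goes through.
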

\begin{proof} 
  The assertion means that $(-)^\diamond$ as a functor
  $\D(\Rep\Ga^d_k)^\op\to \D(\Rep\Ga^d_k)$ is self-adjoint.  From
  \eqref{eq:dual} we know that $(-)^\circ$ as a functor
  $(\Rep\Ga^d_k)^\op\to \Rep\Ga^d_k$ is self-adjoint. Passing to
  complexes, one obtains the following pairs of adjoint functors.
  \[\xymatrix{\D(\Rep\Ga^d_k)^\op\ar@<-0.75ex>[r]_-{\mathbf p^\op}&
    \K(\Rep\Ga^d_k)^\op\ar@<-0.75ex>[l]_-{Q^\op}\ar@<-0.75ex>[rr]_-{(-)^\circ}&&
    \K(\Rep\Ga^d_k)\ar@<-0.75ex>[ll]_-{(-)^\circ}\ar@<-0.75ex>[r]_-{Q}&\D(\Rep\Ga^d_k)
    \ar@<-0.75ex>[l]_-{\mathbf p}}
\]
The assertion now follows.
\end{proof}

\subsection*{Hom-tensor identities}
We collect some basic identities for the internal products
$-\lotimes_{\Ga^d_k}-$ and $\RHOM_{\Ga^d_k}(-,-)$.

\begin{lem}\label{le:ltensor-rhom}
Given $X,Y,Z\in\D(\Rep\Ga^d_k)$, the natural morphism
\[\RHOM _{\Ga^d_k} (X,Y) \lotimes_{\Ga^d_k}
Z\lto\RHOM_{\Ga^d_k}(X,Y\lotimes_{\Ga^d_k} Z)\] is an isomorphism
provided that $X$ or $Z$ is perfect.
\end{lem}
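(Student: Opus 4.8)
The plan is to reduce the statement to the non-derived identity of Lemma~\ref{le:tensor-hom} by a standard dévissage argument on the perfect object. The morphism in question is the natural comparison map adjoint to the obvious composition $\RHOM_{\Ga^d_k}(X,Y)\lotimes_{\Ga^d_k}\RHOM_{\Ga^d_k}(X,Y)^\vee \cdots$; more precisely, it is induced by the evident pairing $\RHOM_{\Ga^d_k}(X,Y)\otimes X\to Y$ after deriving and using the adjunction from the preceding Proposition. To check it is an isomorphism it suffices, by symmetry of the hypotheses, to treat the case that $X$ is perfect (the case that $Z$ is perfect is handled the same way, or deduced by a duality argument using Lemma~\ref{le:der-dual}).

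First I would fix $Y$ and $Z$ and consider the class $\mathcal{C}$ of all $X\in\D(\Rep\Ga^d_k)$ for which the comparison map is an isomorphism. This class is a triangulated subcategory of $\D(\Rep\Ga^d_k)$: it is closed under shifts and, given a distinguished triangle $X'\to X\to X''\to \Si X'$, both functors $\RHOM_{\Ga^d_k}(-,Y)\lotimes_{\Ga^d_k}Z$ and $\RHOM_{\Ga^d_k}(-,Y\lotimes_{\Ga^d_k}Z)$ are exact, so the five lemma applied to the morphism of long exact sequences shows $X',X''\in\mathcal{C}$ implies $X\in\mathcal{C}$ (and similarly for the other two out of three). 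Moreover $\mathcal{C}$ is closed under direct summands, since an idempotent splits the comparison morphism compatibly.

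Next I would show $\mathcal{C}$ contains every finitely generated projective object of $\Rep\Ga^d_k$, regarded as a complex in degree zero. By Proposition~\ref{pr:proj} every such object is a direct summand of a finite direct sum of representations $\Ga^{\la}$, and each $\Ga^{\la}$ is in turn (using the decomposition of divided powers) a direct summand of a representable functor $\Ga^{d,V}$; since $\mathcal{C}$ is closed under finite direct sums and summands, it is enough to treat $X=\Ga^{d,V}$. For a representable functor, $\RHOM_{\Ga^d_k}(\Ga^{d,V},-)=\HOM_{\Ga^d_k}(\Ga^{d,V},-)$ is exact (Lemma~\ref{le:representable} identifies it with precomposition by $V\otimes-$), so no derived corrections occur on either side, and the comparison map is precisely the non-derived isomorphism of Lemma~\ref{le:tensor-hom}. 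Hence $\Ga^{d,V}\in\mathcal{C}$, and therefore all finitely generated projective objects lie in $\mathcal{C}$.

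Finally, every perfect object is by definition a bounded complex of finitely generated projective objects, hence is built from objects in $\mathcal{C}$ by finitely many cones and shifts, so it belongs to $\mathcal{C}$ by the triangulated subcategory property. This proves the isomorphism when $X$ is perfect; the case when $Z$ is perfect follows symmetrically (run the same dévissage on $Z$, using that $\RHOM_{\Ga^d_k}(X,-)$ and $-\lotimes_{\Ga^d_k}Z$ are exact and that Lemma~\ref{le:tensor-hom} also covers the hypothesis that $Z$ is finitely generated projective). The one point that genuinely requires care — the main obstacle — is the bookkeeping needed to verify that the various natural maps (the comparison map, the boundary maps in the triangles, and the splitting of idempotents) are all compatible, so that the five lemma and the summand argument really apply; this is routine but must be set up cleanly at the level of the total complexes described before Lemma~\ref{le:der-dual}.
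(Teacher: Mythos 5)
Your proof is correct and follows essentially the paper's own route: the paper's proof is just the instruction to adapt Lemma~\ref{le:tensor-hom}, and the intended adaptation is precisely your d\'evissage --- reduce along triangles, shifts and direct summands (the same reduction the paper uses in Lemma~\ref{le:per}) to the case where $X$ (or $Z$) is a representable $\Ga^{d,V}$, where the comparison map becomes the non-derived isomorphism of Lemma~\ref{le:tensor-hom} applied degreewise to the total complexes. The only slight imprecision is the claim that for $X=\Ga^{d,V}$ ``no derived corrections occur on either side'': the derived tensor still involves the K-projective resolution $\mathbf p Z$, so the map is the non-derived isomorphism applied levelwise to $Y$ and $\mathbf p Z$ (using that $\HOM_{\Ga^d_k}(\Ga^{d,V},-)$ commutes with the direct sums appearing in the total complex), which changes nothing in the argument.
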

\begin{proof}
Adapt the proof of Lemma~\ref{le:tensor-hom}.
\end{proof}

\begin{lem}\label{le:ddual}
  Let $X,Y$ be in $\D(\Rep\Ga^d_k)$ and suppose that $X$ is
  perfect. Then there are natural isomorphisms
\begin{align*}
X\lotimes_{\Ga^d_k}Y^\diamond&\cong \RHOM_{\Ga^d_k}(X,Y)^\diamond\\
(X\lotimes_{\Ga^d_k}Y)^\diamond&\cong \RHOM_{\Ga^d_k}(X,Y^\diamond).
\end{align*}
\end{lem}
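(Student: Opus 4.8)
The plan is to derive Lemma~\ref{le:ddual} from the already-established non-derived version, Lemma~\ref{le:dual}, by passing to complexes and using that a perfect complex is a bounded complex of finitely generated projectives (so it computes the derived functors without further resolution). First I would recall the formula $\HOM_{\Ga^d_k}(X,S^d)\cong X^\diamond$ noted just above the statement, together with the analogous non-derived fact $\HOM_{\Ga^d_k}(X,S^d)\cong X^\circ$ implicit in Lemma~\ref{le:dual} (take $Y=\Ga^d$, so $Y^\circ=S^d$ and $\HOM_{\Ga^d_k}(X,\Ga^d)^\circ\cong X\otimes_{\Ga^d_k}(\Ga^d)^\circ\cong\ldots$; more directly, the two displayed isomorphisms in Lemma~\ref{le:dual} are the shadow of what we want to prove). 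So the first isomorphism to prove, $X\lotimes_{\Ga^d_k}Y^\diamond\cong\RHOM_{\Ga^d_k}(X,Y)^\diamond$, should be obtained by taking a K-projective resolution $\mathbf p Y\to Y$, noting $Y^\diamond=(\mathbf p Y)^\circ$, and then reducing to the objectwise statement of Lemma~\ref{le:dual} applied to the (finitely generated projective) terms of $X$ against the terms of $\mathbf p Y$, assembling over the total complex.

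The key steps, in order: (1) Reduce to $X$ a bounded complex of finitely generated projective objects, using that $X$ perfect means exactly this, so $X\lotimes_{\Ga^d_k}-=X\otimes_{\Ga^d_k}-$ and $\RHOM_{\Ga^d_k}(X,-)=\HOM_{\Ga^d_k}(X,\mathbf i(-))$, and further reduce to $X$ concentrated in a single degree equal to a single $\Ga^{d,V}$ (every finitely generated projective is a summand of such, by Proposition~\ref{pr:proj} and the decomposition \eqref{eq:decomp}) by a standard dévissage along the triangulated-subcategory argument used in Lemma~\ref{le:per}. (2) Replace $Y$ by a K-projective resolution $\mathbf p Y$ so that $Y^\diamond=(\mathbf p Y)^\circ$ and $X\lotimes_{\Ga^d_k}Y^\diamond=X\otimes_{\Ga^d_k}(\mathbf p Y)^\circ$, while $\RHOM_{\Ga^d_k}(X,Y)^\diamond=\HOM_{\Ga^d_k}(X,\mathbf p Y)^\circ$ using that $X$ is a bounded complex of projectives to compute $\RHOM$ without K-injective resolution and that $(-)^\diamond$ of a K-projective complex is just $(-)^\circ$. (3) For $X=\Ga^{d,V}$ invoke Lemma~\ref{le:representable} and Lemma~\ref{le:dual} degreewise to identify $\Ga^{d,V}\otimes_{\Ga^d_k}(\mathbf p Y)^\circ$ with $\HOM_{\Ga^d_k}(\Ga^{d,V},\mathbf p Y)^\circ$ compatibly with differentials, which is precisely the first claimed isomorphism. (4) Deduce the second isomorphism $(X\lotimes_{\Ga^d_k}Y)^\diamond\cong\RHOM_{\Ga^d_k}(X,Y^\diamond)$ by substituting $Y^\diamond$ for $Y$ in the first isomorphism and using the double-dual isomorphism $Y^{\diamond\diamond}\cong Y$ (which holds here because the terms of a K-projective/perfect complex are finite, where the evaluation $Z\to Z^{\circ\circ}$ is an isomorphism); alternatively, derive it directly from Lemma~\ref{le:der-dual} and the adjunction between $\lotimes$ and $\RHOM$ exactly as Lemma~\ref{le:dual1} was derived from its non-derived inputs.

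The main obstacle I anticipate is step (2)–(3): keeping track of signs and of the product-versus-coproduct discrepancy in the total complexes for $\otimes_{\Ga^d_k}$ (a direct sum over $p+q=n$) versus $\HOM_{\Ga^d_k}$ (a product over $p+q=n$), and checking that the degreewise isomorphisms from Lemma~\ref{le:dual} assemble into a chain isomorphism rather than merely a collection of isomorphisms. Since $X$ is a \emph{bounded} complex of finitely generated projectives, the product and coproduct over $p+q=n$ agree and are finite, so this discrepancy evaporates; the sign bookkeeping in the dual complex (the $(-1)^{n+1}$ in $d_{X^\circ}$) is the only genuinely fiddly point, and it is handled by the same sign conventions already fixed in the Duality subsection. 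Everything else is formal manipulation of colimits and the already-proven identities, so I would phrase the written proof as ``adapt the proof of Lemma~\ref{le:dual}, using that a perfect complex is a bounded complex of finitely generated projectives,'' and leave the sign verification to the reader.
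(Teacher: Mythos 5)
Your treatment of the first isomorphism is essentially the paper's intended argument: the paper's proof is literally ``adapt the proof of Lemma~\ref{le:dual}'', i.e.\ reduce to $X$ a representable concentrated in one degree and apply the non-derived statement termwise to $\mathbf p Y$, and your steps (1)--(3) do exactly this. The only point you leave implicit there is that $\HOM_{\Ga^d_k}(X,\mathbf p Y)$ is again K-projective (so that its derived dual is its plain dual); this is true, since $\HOM_{\Ga^d_k}(\Ga^{d,V},-)\cong\Ga^{d,V^*}\otimes_{\Ga^d_k}-$ preserves K-projectivity, but it deserves a word.

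Step (4), however, has a genuine gap. You deduce the second isomorphism by substituting $Y^\diamond$ for $Y$ in the first and invoking $Y^{\diamond\diamond}\cong Y$, justified by ``the terms of a K-projective/perfect complex are finite''. The finiteness that would be needed concerns $Y$, which is an \emph{arbitrary} object of $\D(\Rep\Ga^d_k)$, and K-projective resolutions in $\Rep\Ga^d_k$ typically have terms that are infinite direct sums of representables, hence not finite, so the evaluation map into the double dual is not an isomorphism. The derived double dual is not the identity on the unbounded derived category --- this is precisely why Corollary~\ref{co:dualdual} is stated with $X^{\diamond\diamond}$ rather than $X$ (already for $d=1$ and $k$ a field, an infinite-dimensional $Y$ gives $Y^{\diamond\diamond}\not\cong Y$). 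Moreover, even granting $Y^{\diamond\diamond}\cong Y$, the substitution only yields $X\lotimes_{\Ga^d_k}Y\cong\RHOM_{\Ga^d_k}(X,Y^\diamond)^\diamond$, and passing to $(X\lotimes_{\Ga^d_k}Y)^\diamond\cong\RHOM_{\Ga^d_k}(X,Y^\diamond)$ requires a second illegitimate double-dual cancellation. Your fallback (imitating Lemma~\ref{le:dual1} via Lemma~\ref{le:der-dual} and the adjunction) computes $\Hom_{\D(\Ga^d_k)}(P,(X\lotimes_{\Ga^d_k}Y)^\diamond)\cong\Hom_{\D(\Ga^d_k)}(Y,\RHOM_{\Ga^d_k}(X,P^\diamond))$ and $\Hom_{\D(\Ga^d_k)}(P,\RHOM_{\Ga^d_k}(X,Y^\diamond))\cong\Hom_{\D(\Ga^d_k)}(Y,(P\lotimes_{\Ga^d_k}X)^\diamond)$, so it needs the very isomorphism being proved, with $P$ in place of $Y$; it is circular unless one settles the representable case directly. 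The fix is simply to run your steps (1)--(3) for the second isomorphism as well: for $X=\Ga^{d,V}$ in a single degree, the second isomorphism of Lemma~\ref{le:dual} gives $(X\otimes_{\Ga^d_k}\mathbf p Y)^\circ\cong\HOM_{\Ga^d_k}(X,(\mathbf p Y)^\circ)$ termwise; since $X\otimes_{\Ga^d_k}\mathbf p Y$ is K-projective its derived dual is its plain dual, and since $X$ is K-projective one has $\RHOM_{\Ga^d_k}(X,Y^\diamond)\cong\HOM_{\Ga^d_k}(X,(\mathbf p Y)^\circ)$, which is exactly what is wanted.
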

\begin{proof}
Adapt the proof of Lemma~\ref{le:dual}.
\end{proof}

\begin{lem}\label{le:ddual1}
Given $X,Y\in\D(\Rep\Ga^d_k)$, there is a natural isomorphism 
\[\RHOM_{\Ga^d_k}(X,Y^\diamond)\cong\RHOM_{\Ga^d_k}(Y,X^\diamond).\]
\end{lem}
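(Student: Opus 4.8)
The plan is to mimic the proof of Lemma~\ref{le:dual1}, replacing the non-derived identities with their derived analogues. The statement asserts that the functor $(-)^\diamond\comp\RHOM_{\Ga^d_k}(-,-)$ is symmetric in its two arguments, up to natural isomorphism, and the cleanest route is through Yoneda's lemma in the triangulated setting: it suffices to produce, for every perfect $P\in\D(\Rep\Ga^d_k)$, a natural isomorphism
\[\Hom_{\D(\Ga^d_k)}(P,\RHOM_{\Ga^d_k}(X,Y^\diamond))\cong\Hom_{\D(\Ga^d_k)}(P,\RHOM_{\Ga^d_k}(Y,X^\diamond)),\]
and then invoke that the perfect objects (indeed the finitely generated projectives, as complexes concentrated in degree zero) generate $\D(\Rep\Ga^d_k)$.

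First I would fix such a $P$ and run the chain of adjunctions exactly as in Lemma~\ref{le:dual1}. Using the derived tensor--Hom adjunction we rewrite $\Hom_{\D(\Ga^d_k)}(P,\RHOM_{\Ga^d_k}(X,Y^\diamond))$ as $\Hom_{\D(\Ga^d_k)}(P\lotimes_{\Ga^d_k}X,Y^\diamond)$. Next, since $(-)^\diamond$ is self-adjoint by Lemma~\ref{le:der-dual}, this equals $\Hom_{\D(\Ga^d_k)}(Y,(P\lotimes_{\Ga^d_k}X)^\diamond)$. Now $P$ is perfect, so Lemma~\ref{le:ddual} gives $(P\lotimes_{\Ga^d_k}X)^\diamond\cong\RHOM_{\Ga^d_k}(P,X^\diamond)$, turning the group into $\Hom_{\D(\Ga^d_k)}(Y,\RHOM_{\Ga^d_k}(P,X^\diamond))$, which by the derived adjunction again is $\Hom_{\D(\Ga^d_k)}(P\lotimes_{\Ga^d_k}Y,X^\diamond)$. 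Tracing the same steps backwards starting from the right-hand side — adjunction, then self-adjointness of $(-)^\diamond$ — brings us to $\Hom_{\D(\Ga^d_k)}(P,\RHOM_{\Ga^d_k}(Y,X^\diamond))$, and the composite of all these isomorphisms is the desired natural isomorphism.

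I expect the only genuine obstacle to be bookkeeping of naturality: the intermediate isomorphisms must be natural in $P$ (so that Yoneda applies) and simultaneously natural in $X$ and $Y$ (so that the final statement is a natural isomorphism of functors on $\D(\Rep\Ga^d_k)^\op\times\D(\Rep\Ga^d_k)$). This is harmless because each ingredient — the derived tensor--Hom adjunction, Lemma~\ref{le:der-dual}, and Lemma~\ref{le:ddual} — is already natural in all relevant variables, but it deserves a sentence. One should also note that the perfectness hypothesis on $P$ is exactly what is needed to apply Lemma~\ref{le:ddual}; since perfect objects form a generating set for $\D(\Rep\Ga^d_k)$ (they contain the projective generators $\Ga^{d,V}$), the appeal to Yoneda's lemma is legitimate. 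The proof is therefore a short formal argument, and the writeup can simply be: choose a perfect $P$, display the chain of natural isomorphisms, and conclude by Yoneda.

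\begin{proof}
  By Yoneda's lemma it is enough to produce, naturally in a perfect
  object $P\in\D(\Rep\Ga^d_k)$ and in $X,Y$, an isomorphism between
  the groups $\Hom_{\D(\Ga^d_k)}(P,\RHOM_{\Ga^d_k}(X,Y^\diamond))$ and
  $\Hom_{\D(\Ga^d_k)}(P,\RHOM_{\Ga^d_k}(Y,X^\diamond))$, since the
  perfect objects generate $\D(\Rep\Ga^d_k)$. Using the derived
  tensor--Hom adjunction, Lemma~\ref{le:der-dual}, and
  Lemma~\ref{le:ddual} (the latter applicable because $P$ is perfect),
  we compute
\begin{align*}
  \Hom_{\D(\Ga^d_k)}(P,\RHOM_{\Ga^d_k}(X,Y^\diamond))&\cong
  \Hom_{\D(\Ga^d_k)}(P\lotimes_{\Ga^d_k}X,Y^\diamond)\\
  &\cong  \Hom_{\D(\Ga^d_k)}(Y,(P\lotimes_{\Ga^d_k}X)^\diamond)\\
  &\cong  \Hom_{\D(\Ga^d_k)}(Y,\RHOM_{\Ga^d_k}(P,X^\diamond))\\
 &\cong  \Hom_{\D(\Ga^d_k)}(P,\RHOM_{\Ga^d_k}(Y,X^\diamond)).
\end{align*}
The assertion now follows from Yoneda's lemma.
\end{proof}
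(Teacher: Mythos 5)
Your argument is exactly the paper's intended proof: the paper disposes of this lemma by saying ``adapt the proof of Lemma~\ref{le:dual1}, using Lemma~\ref{le:der-dual}'', and your chain of isomorphisms (derived tensor--Hom adjunction, self-adjointness of $(-)^\diamond$, Lemma~\ref{le:ddual} for perfect $P$, then Yoneda over the perfect generators) is precisely that adaptation. Correct, and no essential difference from the paper's route.
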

\begin{proof}
Adapt the proof of Lemma~\ref{le:dual1}, using Lemma~\ref{le:der-dual}.
\end{proof}

\subsection*{Koszul duality}

We compute $\La^d\lotimes_{\Ga^d_k}-$ and $\RHOM_{\Ga^d_k}(\La^d,-)$ using the
relation between exterior and symmetric powers.

\begin{lem}\label{le:resol}
  Given $d\ge 1$ and $V\in\P_k$, the normalised bar resolutions
  computing $\Ext_{S(V^*)}(k,k)\cong\La V$ and
  $\Ext_{\La(V^*)}(k,k)\cong S V$ yield exact sequences
\begin{multline*}
0\to\Ga^d V\to\bigoplus_{i_1+i_2=d}\Ga^{i_1}V\otimes
\Ga^{i_2}V\to\cdots\\
\to\bigoplus_{i_1+\cdots +i_{d-1}=d}\Ga^{i_1}V\otimes\cdots\otimes
\Ga^{i_{d-1}}V\to V^{\otimes d}\to \La^d V\to 0
\end{multline*}
\begin{multline*}
0\to\La^d V\to\bigoplus_{i_1+i_2=d}\La^{i_1}V\otimes
\La^{i_2}V\to\cdots\\
\to\bigoplus_{i_1+\cdots +i_{d-1}=d}\La^{i_1}V\otimes\cdots\otimes
\La^{i_{d-1}}V\to V^{\otimes d}\to S^d V\to 0
\end{multline*}
which are natural in $V$ and where  $i_1,i_2,\ldots$ run through all positive integers.
\end{lem}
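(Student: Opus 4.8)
The plan is to realise the two claimed exact sequences as the degree-$d$ components of suitable Koszul-type complexes built from the bar construction, then transport the exactness through the standard duality between divided and symmetric powers. First I would recall the classical computation of $\Ext$ algebras: for a finitely generated free $k$-module $W$ one has $\Ext^*_{SW}(k,k)\cong\La W$ and $\Ext^*_{\La W}(k,k)\cong SW$, with the grading by internal degree matching up the two sides. The normalised (reduced) bar resolution of $k$ over $SW$ is the complex whose term in homological degree $p$ is $(SW)\otimes(\overline{SW})^{\otimes p}$, where $\overline{SW}=\bigoplus_{i\ge 1}S^iW$; applying $k\otimes_{SW}-$ and reading off the part of internal degree $d$ produces, after dualising $W=V^*$, precisely the finite complex
\[
0\to\Ga^dV\to\bigoplus_{i_1+i_2=d}\Ga^{i_1}V\otimes\Ga^{i_2}V\to\cdots\to\bigoplus_{i_1+\cdots+i_{d-1}=d}\Ga^{i_1}V\otimes\cdots\otimes\Ga^{i_{d-1}}V\to V^{\otimes d}\to\La^dV\to 0,
\]
since $\Ga^jV=(S^j(V^*))^*$ and, in internal degree $d$, only tuples of \emph{positive} internal degrees contribute (the normalisation kills the bars with a degree-zero entry), bounding the homological length. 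The symmetric side is obtained the same way from the bar resolution over the exterior algebra $\La(V^*)$.

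The key point to verify carefully is that the homology of the degree-$d$ truncation of the bar complex is concentrated in the single spot recording $\Ext^d$, i.e. that in internal degree $d$ the bar complex of $SW$ is exact except at the two ends, where its homology is $k$ in degree $0$ (contributing the augmentation $V^{\otimes d}\to\La^dV$) and $\La^dW$ in degree $d-1$ (contributing the injection $\Ga^dV\hookrightarrow\cdots$). This follows from the fact that $SW$ is a Koszul algebra: its bar homology $\Ext^p_{SW}(k,k)=\La^pW$ lives in internal degree exactly $p$, so in internal degree $d$ the only surviving homology is in homological degree $d$, which after the shift conventions of the normalised bar complex sits at the left end of the displayed sequence. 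I would spell this out using the weight decomposition of the bar complex and the Koszulity of $SW$ (equivalently of $\La W$), citing the classical references for the $\Ext$ computation; the grading bookkeeping that pins down the homological length to $d$ is the step I expect to be the main obstacle, since one must be attentive to the normalised versus unnormalised bar complex and to the internal-versus-homological bidegree.

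Finally I would note that everything is natural in $V$: the bar resolution of $SW$ is functorial in $W$, the identifications $\Ga^jV=(S^j(V^*))^*$ are natural, and hence the whole displayed complex is natural in $V\in\P_k$, in fact in $V\in\Ga^d\P_k$ after passing to the appropriate $\frS_d$-equivariant structure. For the symmetric-power sequence one argues symmetrically, replacing $SW$ by $\La W$ and using $\Ext^*_{\La W}(k,k)\cong SW$, with the same Koszulity input guaranteeing that the degree-$d$ part has length $d$. This gives the two exact sequences as stated.
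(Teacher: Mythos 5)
Your proposal is correct and takes essentially the same route as the paper: dualise the normalised bar resolution of $k$ over $S(V^*)$ (respectively $\La(V^*)$), extract the internal degree $d$ component with $S^i(V^*)^*$ replaced by $\Ga^iV$, and use the classical Koszul identification $\Ext_{S(V^*)}(k,k)\cong\La V$ together with the purity of the internal grading to conclude exactness; the paper likewise only sketches this, citing Akin and Totaro for details. (One small slip: in internal degree $d$ the surviving homology sits in (co)homological degree $d$, yielding the surjection $V^{\otimes d}\to\La^dV$ at one end and the injectivity of $\Ga^dV$ at the other, rather than in degrees $0$ and $d-1$ as you first state --- but the Koszulity argument you then give is exactly the correct one.)
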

\begin{proof}
  We refer to \cite[p.~359]{Ak1989} or \cite[p.~6]{To1997} for the
  proof and sketch the argument for the convenience of the
  reader. Take the normalised bar resolution \cite[Chapter~X]{ML1975}
  of $k$ over $S(V^*)$:
\[\cdots\to S(V^*)\otimes S^{>0}(V^*) ^{\otimes
  2}\to S(V^*)\otimes S^{>0}(V^*)\to S(V^*)\to k\to 0\] 
Then apply $\Hom_{S(V^*)}(-,k)$:
\[0\to k\lto S^{>0}(V^*)^*\to (S^{>0}(V^*)^*)^{\otimes 2}\to\cdots\]
The cohomology of this complex gives $\Ext_{S(V^*)}(k,k)$ and
identifies with $\La V$ via
classical Koszul duality. Taking the degree $d$ part (with $S^i(V^*)^*$ replaced by
$\Ga^iV$) yields the  resolution of $\La^dV$. The construction of the
resolution of $S^dV$ is analogous.
\end{proof}

The resolution of $\La^d$ shows  that it is a perfect object in $\D(\Rep\Ga^d_k)$ when
viewed as a complex concentrated in degree zero.

\begin{prop}\label{pr:der-exterior}
Let $d\ge 0$. Then
\[\La^d\lotimes_{\Ga^d_k}\La^d\cong S^d.\]
\end{prop}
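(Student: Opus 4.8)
The plan is to upgrade the non-derived isomorphism $\La^d\otimes_{\Ga^d_k}\La^d\cong S^d$ from Proposition~\ref{pr:exterior} to a derived one. The point is that $\La^d$, viewed as a complex concentrated in degree zero, is already perfect in $\D(\Rep\Ga^d_k)$: the first exact sequence in Lemma~\ref{le:resol} exhibits it as the total object of a bounded complex
\[
P_\bullet:\quad 0\to\Ga^d\to\bigoplus_{i_1+i_2=d}\Ga^{i_1}\otimes\Ga^{i_2}\to\cdots\to\bigoplus_{i_1+\cdots+i_{d-1}=d}\Ga^{i_1}\otimes\cdots\otimes\Ga^{i_{d-1}}\to\otimes^d\to 0
\]
whose terms are finitely generated projective by Proposition~\ref{pr:proj} (each $\Ga^{i_1}\otimes\cdots\otimes\Ga^{i_r}$ is a $\Ga^\la$ for a suitable $\la$, using \eqref{eq:tensor} for the last term $\otimes^d=\Ga^{(1,\ldots,1)}$). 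So the quasi-isomorphism $P_\bullet\xrightarrow{\sim}\La^d$ is a projective resolution of the bounded complex $\La^d$ and we may compute $\La^d\lotimes_{\Ga^d_k}\La^d$ as $P_\bullet\otimes_{\Ga^d_k}\La^d$, with no further resolution of the second factor needed.

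First I would apply the right-exact functor $-\otimes_{\Ga^d_k}\La^d$ termwise to $P_\bullet$. Since $-\otimes_{\Ga^d_k}-$ commutes with the external tensor product in the sense made precise by Lemma~\ref{le:int-ext} (and iterated in Corollary~\ref{co:exterior}), each term $\Ga^{i_1}\otimes\cdots\otimes\Ga^{i_r}$ is sent to $S^{i_1}\otimes\cdots\otimes S^{i_r}$, and the term $\otimes^d$ is sent to $\La^d\otimes_{\Ga^d_k}\otimes^d\cong\otimes^d$ again by \eqref{eq:tensor}. Thus $P_\bullet\otimes_{\Ga^d_k}\La^d$ is identified with the complex
\[
0\to S^d\to\bigoplus_{i_1+i_2=d}S^{i_1}\otimes S^{i_2}\to\cdots\to\bigoplus_{i_1+\cdots+i_{d-1}=d}S^{i_1}\otimes\cdots\otimes S^{i_{d-1}}\to\otimes^d\to 0,
\]
which is precisely the (reversed) second exact sequence of Lemma~\ref{le:resol} resolving $S^d$. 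Hence this complex has homology concentrated in degree zero, equal to $S^d$, giving $\La^d\lotimes_{\Ga^d_k}\La^d\cong S^d$ in $\D(\Rep\Ga^d_k)$. The case $d=0$ is trivial since everything is $k$.

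The step I expect to require the most care is checking that applying $-\otimes_{\Ga^d_k}\La^d$ to the \emph{differentials} of $P_\bullet$ yields exactly the differentials of the bar-type resolution of $S^d$ — i.e. that the identification of terms via Lemma~\ref{le:int-ext} is compatible with the comultiplication/multiplication maps that constitute the two bar resolutions, up to the signs recorded in the Remark following Lemma~\ref{le:int-ext}. One way to sidestep an explicit diagram chase is to observe that both resolutions in Lemma~\ref{le:resol} are the degree-$d$ parts of bar resolutions computing $\mathrm{Ext}$ over the symmetric and exterior algebras, and that classical Koszul duality already matches these bar constructions; tensoring with $\La^d$ over $\Ga^d_k$ is, on the level of the underlying algebras, exactly the Koszul-duality functor, so the differentials correspond by construction. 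Alternatively, one can simply invoke naturality in $V$ as in the proofs of Lemma~\ref{le:int-ext} and Corollary~\ref{co:exterior}, reducing to the representable case where the matching of differentials is a statement about $\P_k$ that can be read off from \eqref{eq:piso1}--\eqref{eq:piso3}.
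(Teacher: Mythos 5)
Your overall strategy is the paper's: resolve $\La^d$ by the first exact sequence of Lemma~\ref{le:resol}, whose terms are finitely generated projectives of the form $\Ga^\la$, tensor this resolution termwise with $\La^d$, and identify the result with the Koszul resolution of $S^d$. However, the key identification of the terms is misapplied, and as written the step fails. The terms of the resolution are $\Ga^{i_1}\otimes\cdots\otimes\Ga^{i_r}$, so the relevant isomorphism is the one of Proposition~\ref{pr:int-ext}, namely $\La^d\otimes_{\Ga^d_k}(\Ga^{i_1}\otimes\cdots\otimes\Ga^{i_r})\cong\La^{i_1}\otimes\cdots\otimes\La^{i_r}$, not Corollary~\ref{co:exterior}, which computes $\La^d\otimes_{\Ga^d_k}\La^\la\cong S^\la$ and would only apply if the terms were exterior powers. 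Indeed $\La^d\otimes_{\Ga^d_k}\Ga^\la\cong S^\la$ is false in general: $\Ga^d=\Ga^{d,k}$ is the unit for $\otimes_{\Ga^d_k}$, so $\La^d\otimes_{\Ga^d_k}\Ga^d\cong\La^d$, and for $k=\bbF_2$, $d=2$, one has $\La^2\not\cong S^2$ (see the explicit example in the paper, where $\La^2$ is simple and $S^2$ has length two). Consequently the complex you write down, with terms $S^{i_1}\otimes\cdots\otimes S^{i_r}$ and with $S^d$ at the left end, is not the second exact sequence of Lemma~\ref{le:resol} (reversed or otherwise), and the claim that its homology is $S^d$ concentrated in degree zero has no justification as stated.

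The repair is immediate and brings you back to the paper's proof: after tensoring, the terms are $\La^d$, $\bigoplus\La^{i_1}\otimes\La^{i_2}$, \dots, $\bigoplus\La^{i_1}\otimes\cdots\otimes\La^{i_{d-1}}$, and $\otimes^d$ (the last via $\otimes^d=\Ga^{(1,\ldots,1)}\mapsto\La^{(1,\ldots,1)}=\otimes^d$, as you note), which are exactly the terms of the second sequence of Lemma~\ref{le:resol}; that sequence says this complex is a resolution of $S^d$, i.e.\ its cohomology is $S^d$ concentrated in degree zero, giving $\La^d\lotimes_{\Ga^d_k}\La^d\cong S^d$. Your remarks on why the differentials match under this identification (naturality in $V$, reduction to the representable case, the bar-construction origin of both sequences) are sensible and address a point the paper leaves implicit, but they must be applied to the identification $\La^d\otimes_{\Ga^d_k}\Ga^\la\cong\La^\la$, not to the incorrect one with symmetric powers.
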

\begin{proof}
  Tensoring the projective resolution of $\La^d$ in
  Lemma~\ref{le:resol} with $\La^d$ yields an exact sequence which
  identifies with the resolution of $S^d$ via the isomorphism of
  Proposition~\ref{pr:int-ext}.
\end{proof}

\begin{thm}\label{th:duality}
The functors $\La^d\lotimes_{\Ga^d_k}-$ and $\RHOM_{\Ga^d_k}(\La^d,-)$
provide mutually quasi-inverse equivalences
\[\D(\Rep\Ga^d_k)\stackrel{\sim}\lto\D(\Rep\Ga^d_k).\]
\end{thm}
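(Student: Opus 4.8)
The plan is to show that the functor $F = \La^d\lotimes_{\Ga^d_k}-$ is an equivalence by producing an explicit quasi-inverse, which will be $G = \RHOM_{\Ga^d_k}(\La^d,-)$. The adjunction $\Hom_{\D(\Ga^d_k)}(F X, Y)\cong\Hom_{\D(\Ga^d_k)}(X, GY)$ is already in hand from the previous proposition, so it suffices to exhibit natural isomorphisms $GF\cong\id$ and $FG\cong\id$; by general nonsense about adjoint pairs, it actually suffices to check that the unit $\eta\colon\id\to GF$ and counit $\e\colon FG\to\id$ are isomorphisms. First I would reduce to a computation with $\La^d$ alone: since $\La^d$ is perfect (noted after Lemma~\ref{le:resol}), Lemma~\ref{le:ltensor-rhom} gives for any $X$ a natural isomorphism
\[
GFX=\RHOM_{\Ga^d_k}(\La^d,\La^d\lotimes_{\Ga^d_k}X)\cong\RHOM_{\Ga^d_k}(\La^d,\La^d)\lotimes_{\Ga^d_k}X,
\]
and similarly for $FG$ using Lemma~\ref{le:ddual} to move the dual around, or by noting $FGX\cong\La^d\lotimes_{\Ga^d_k}\RHOM_{\Ga^d_k}(\La^d,\La^d)\lotimes_{\Ga^d_k}X$ after the same manipulation. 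So everything comes down to identifying the ``kernel'' complex $\RHOM_{\Ga^d_k}(\La^d,\La^d)$ together with the map comparing it to the unit object $\otimes^d=\Ga^{(1,\dots,1)}=S^{(1,\dots,1)}$ of the tensor product (cf.\ \eqref{eq:tensor}).

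The key step is to compute $\RHOM_{\Ga^d_k}(\La^d,\La^d)$ and show it is isomorphic to $\otimes^d$ concentrated in degree zero, with the comparison morphism an isomorphism. Here I would invoke the derived version of the square identity: Proposition~\ref{pr:der-exterior} gives $\La^d\lotimes_{\Ga^d_k}\La^d\cong S^d$. Combining with Lemma~\ref{le:ddual} (take $X=\La^d$, $Y=\La^d$), using $(\La^d)^\diamond\cong\RHOM_{\Ga^d_k}(\La^d,S^d)$ and the fact that $\RHOM_{\Ga^d_k}(X,S^d)\cong X^\diamond$ recorded before Lemma~\ref{le:der-dual}, one gets
\[
\RHOM_{\Ga^d_k}(\La^d,\La^d)\cong\RHOM_{\Ga^d_k}(\La^d,\La^d\lotimes_{\Ga^d_k}(\otimes^d))\cong(\La^d\lotimes_{\Ga^d_k}\La^d)^\diamond\lotimes_{\Ga^d_k}\cdots
\]
— more cleanly, $\RHOM_{\Ga^d_k}(\La^d,\La^d)\cong(\La^d)^\diamond\lotimes_{\Ga^d_k}\La^d$ by Lemma~\ref{le:ddual}, and since $\La^d$ is perfect its derived dual is computed from the resolution of Lemma~\ref{le:resol}; then $(\La^d)^\diamond\lotimes_{\Ga^d_k}\La^d\cong\RHOM_{\Ga^d_k}(\La^d, \La^d\lotimes_{\Ga^d_k}\La^d)^{\text{?}}$. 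The robust route is: apply Proposition~\ref{pr:der-exterior} twice, $\La^d\lotimes_{\Ga^d_k}\La^d\lotimes_{\Ga^d_k}\La^d\cong S^d\lotimes_{\Ga^d_k}\La^d$, and dualise. In any case the outcome to establish is $\RHOM_{\Ga^d_k}(\La^d,\La^d)\cong\otimes^d$ in degree zero, and I would identify the comparison map with the natural evaluation/multiplication map and check it is the identity on the representable building blocks $\Ga^\la$ via Proposition~\ref{pr:int-ext} and Corollary~\ref{co:exterior}, then extend to all of $\D(\Rep\Ga^d_k)$ because both functors preserve coproducts (Lemma~\ref{le:per}) and every object is built from perfect ones.

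Finally, to pass from the isomorphisms on perfect objects to all of $\D(\Rep\Ga^d_k)$, I would argue that the full subcategory of objects on which $\eta$ (resp.\ $\e$) is an isomorphism is triangulated, closed under set-indexed coproducts (using Lemma~\ref{le:per}, since $\La^d$ is perfect), and contains every finitely generated projective $\Ga^\la$ by the explicit computations of the previous section together with Corollary~\ref{co:exterior} and Proposition~\ref{pr:der-exterior}; since these generate $\D(\Rep\Ga^d_k)$ as a localising subcategory, the unit and counit are isomorphisms everywhere. The main obstacle I anticipate is the bookkeeping in the second step: showing that the abstract isomorphism $\RHOM_{\Ga^d_k}(\La^d,\La^d)\cong\otimes^d$ is realised by the \emph{canonical} comparison morphism (not merely some isomorphism), so that it genuinely witnesses $GF\cong\id$ as natural transformations rather than just an abstract equivalence of categories. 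This requires tracking the multiplication $\La^d\otimes\La^d\to\La^{d}$-type maps and the signs from graded commutativity through Lemma~\ref{le:int-ext} and its remark, and checking compatibility on the summands $\La^\la$ of the Koszul resolution.
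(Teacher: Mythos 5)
Your overall architecture is the right one, and it matches the paper's: since $\La^d$ is perfect, Lemma~\ref{le:ltensor-rhom} reduces everything to identifying the kernel $\RHOM_{\Ga^d_k}(\La^d,\La^d)$ with the unit of the internal tensor product. But you have misidentified that unit, and this breaks the key step. By \eqref{eq:intern1} the unit is the representable object $\Ga^{d,k}=\Hom_{\Ga^d\P_k}(k,-)=\Ga^d$, \emph{not} $\otimes^d=\Ga^{(1,\dots,1)}$; the isomorphism \eqref{eq:tensor} concerns the external tensor product and says nothing about the internal one (in the explicit $d=2$, $k=\bbF_2$ table it is $\Ga^\a=\sao$ that acts as the identity, while $\otimes^2=\Ga^\om=\soao$ visibly does not). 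Consequently your stated target ``$\RHOM_{\Ga^d_k}(\La^d,\La^d)\cong\otimes^d$ in degree zero'' is false for $d\ge 2$, and the intermediate claim $\RHOM_{\Ga^d_k}(\La^d,\La^d)\cong(\La^d)^\diamond\lotimes_{\Ga^d_k}\La^d$ is not what Lemma~\ref{le:ddual} gives: you are conflating $(-)^\diamond$, which is duality with respect to $S^d$ (recall $\RHOM_{\Ga^d_k}(X,S^d)\cong X^\diamond$), with the monoidal dual $\RHOM_{\Ga^d_k}(-,\Ga^d)$. Indeed, combined with $(\La^d)^\diamond\cong\La^d$ and Proposition~\ref{pr:der-exterior}, your formula would give $S^d$, contradicting your own assertion that the kernel is the unit. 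The correct computation, and the one the paper makes, is
\[\RHOM_{\Ga^d_k}(\La^d,\La^d)\cong\RHOM_{\Ga^d_k}(\La^d,(\La^d)^\diamond)
\cong(\La^d\lotimes_{\Ga^d_k}\La^d)^\diamond\cong(S^d)^\diamond\cong\Ga^d,\]
using Lemma~\ref{le:ddual} and Proposition~\ref{pr:der-exterior}.

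Once the kernel is identified with $\Ga^d$, the rest simplifies: $\RHOM_{\Ga^d_k}(\La^d,\La^d\lotimes_{\Ga^d_k}X)\cong\Ga^d\lotimes_{\Ga^d_k}X\cong X$ and likewise $\La^d\lotimes_{\Ga^d_k}\RHOM_{\Ga^d_k}(\La^d,X)\cong X$, naturally in $X$, for \emph{all} complexes $X$; so your final reduction to generators and localising subcategories, while not wrong in principle, is unnecessary, and it anyway leans on the unproved degree-zero concentration of $\RHOM_{\Ga^d_k}(\La^d,\La^\la)$ rather than the clean dualisation above. Your anticipated ``main obstacle'' (that the abstract isomorphism be realised by the canonical comparison map) is also a non-issue: mutually quasi-inverse means only that there exist natural isomorphisms $GF\cong\Id$ and $FG\cong\Id$, and once these exist the unit and counit of the adjunction are automatically invertible. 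So the proposal as written fails at its central computation, but becomes the paper's proof after correcting the unit object and the use of $(-)^\diamond$.
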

\begin{proof}
  Using Hom-tensor identities, the isomorphism
  $(\La^d)^\diamond\cong\La^d$, and Proposition~\ref{pr:der-exterior},
  one computes 
  \[ \RHOM_{\Ga^d_k}(\La^d,\La^d)\cong (\La^d
  \lotimes_{\Ga^d_k}\La^d)^\diamond \cong (S^d)^\diamond\cong\Ga^d\] and
  this gives for a complex $X$
\begin{align*}
  \RHOM_{\Ga^d_k}(\La^d,\La^d\lotimes_{\Ga^d_k}X) &\cong
  \RHOM_{\Ga^d_k}(\La^d,\La^d)\lotimes_{\Ga^d_k}X\\
   &\cong
  \Ga^d\lotimes_{\Ga^d_k}X\\
  &\cong X \intertext{and}
  \La^d\lotimes_{\Ga^d_k}\RHOM_{\Ga^d_k}(\La^d, X)
  &\cong \RHOM_{\Ga^d_k}(\La^d,\La^d\lotimes_{\Ga^d} X)\\
&\cong  \RHOM_{\Ga^d_k}(\La^d,\La^d) \lotimes_{\Ga^d_k} X\\
&\cong   \Ga^d\lotimes_{\Ga^d_k} X\\
  &\cong X.\tag*{\qed}
\end{align*}
\renewcommand{\qedsymbol}{}
\end{proof}

The following consequence is the contravariant version of Koszul duality.

\begin{cor}\label{co:dualdual}
Given $X\in\D(\Rep\Ga^d_k)$, there is a natural isomorphism
\[\RHOM_{\Ga^d_k}(\RHOM_{\Ga^d_k}(X,\La^d),\La^d)\cong
X^{\diamond\diamond}.\]
\end{cor}
\begin{proof} 
  We combine Hom-tensor identities, the isomorphism
  $(\La^d)^\diamond\cong\La^d$, and Theorem~\ref{th:duality}. This
  gives
\begin{align*}
  \RHOM_{\Ga^d_k}(\RHOM_{\Ga^d_k}(X,\La^d),\La^d)&\cong
  \RHOM_{\Ga^d_k}(\RHOM_{\Ga^d_k}(\La^d,X^\diamond),\La^d)\\
 &\cong
  \RHOM_{\Ga^d_k}(\La^d,\RHOM_{\Ga^d_k}(\La^d,X^\diamond)^\diamond)\\
&\cong
  \RHOM_{\Ga^d_k}(\La^d,\La^d\lotimes_{\Ga^d_k}X^{\diamond\diamond})\\
&\cong X^{\diamond\diamond}.  \tag*{\qed}
\end{align*}
\renewcommand{\qedsymbol}{}
\end{proof}

\subsection*{The bounded derived category of finite representations}

Let $\D^b(\rep\Ga^d_k)$ denote the bounded derived category of the
exact category $\rep\Ga^d_k$.

\begin{lem}
The inclusion $\rep\Ga^d_k\to\Rep\Ga^d_k$ induces a fully faithful and
exact functor
\[\D^b(\rep\Ga^d_k)\lto\D(\Rep\Ga^d_k).\]
\end{lem}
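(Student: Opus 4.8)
The plan is to prove this by the standard comparison criterion for derived categories of exact categories and their ambient abelian categories. The inclusion $\iota\colon\rep\Ga^d_k\to\Rep\Ga^d_k$ is an exact functor between exact categories ($\Rep\Ga^d_k$ being abelian, hence exact), and it preserves the projective objects: each finitely generated projective object of $\Rep\Ga^d_k$, being a summand of a finite sum of representables $\Ga^{d,V}$ by Proposition~\ref{pr:proj}, lies in $\rep\Ga^d_k$ and remains projective there. The key point is that $\rep\Ga^d_k$ has enough projectives: every finite representation $X$ admits an epimorphism $\Ga^{d,V_1}\oplus\cdots\oplus\Ga^{d,V_r}\to X$ with each $\Ga^{d,V_i}$ finite, and the kernel — computed pointwise — is again finite because $\P_k$ is closed under kernels of surjections between finitely generated projectives (a surjection of finitely generated projective $k$-modules splits). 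So one can resolve every object of $\rep\Ga^d_k$ by projectives that stay inside $\rep\Ga^d_k$ and are still projective in $\Rep\Ga^d_k$.

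The main steps are as follows. First I would show the functor is well-defined and exact: it sends a conflation in $\rep\Ga^d_k$ to a short exact sequence in $\Rep\Ga^d_k$, hence a distinguished triangle, and this is compatible with mapping cones, so $\D^b(\rep\Ga^d_k)\to\D(\Rep\Ga^d_k)$ is a triangulated functor. Second, for full faithfulness I would reduce, via the triangulated structure and the fact that every bounded complex over $\rep\Ga^d_k$ is quasi-isomorphic to a bounded-above complex of finitely generated projectives (using the resolution described above, truncated since the complex is bounded), to computing $\Hom$ out of such a projective complex $P^\bullet$. For a bounded-above complex of projectives $P^\bullet$ and any $Y\in\D^b(\rep\Ga^d_k)$ one has $\Hom_{\D^b(\rep\Ga^d_k)}(P^\bullet,Y)\cong\Hom_{\K(\rep\Ga^d_k)}(P^\bullet,Y)$, and the same identity holds in $\D(\Rep\Ga^d_k)$ since $P^\bullet$ is also a complex of projectives there; since the homotopy category computation only involves the $\Hom$-groups $\Hom_{\Ga^d_k}(P^i,Y^j)$ — which are the same whether computed in $\rep\Ga^d_k$ or $\Rep\Ga^d_k$ — the two $\Hom$-groups agree, naturally in both variables.

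The step I expect to require the most care is the reduction "every bounded complex over $\rep\Ga^d_k$ is isomorphic in $\D^b(\rep\Ga^d_k)$ to a bounded-above complex of finitely generated projectives, and bounded below as well enough to make the homotopy-category identification legitimate in the unbounded category $\D(\Rep\Ga^d_k)$." Concretely: one must check that the projective resolution can be chosen bounded-above, which is automatic, but then argue that comparing with morphisms in the \emph{unbounded} $\D(\Rep\Ga^d_k)$ still works — this is where one invokes that a bounded-above complex of projectives is K-projective, so $\Hom_{\D(\Ga^d_k)}(P^\bullet,-)=\Hom_{\K(\Ga^d_k)}(P^\bullet,-)$, and that the target, being a bounded complex of finite (hence in particular arbitrary) objects, poses no convergence issue. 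Once these identifications are in place on both sides, equality of the relevant $\Hom$-complexes is immediate from $\Hom_{\ Ga^d_k}(\Ga^{d,V},Z)\cong Z(V)$ being independent of the ambient category, and fullness plus faithfulness follow together.
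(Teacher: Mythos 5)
Your proposal is correct and rests on exactly the same key point as the paper's proof: every object of $\rep\Ga^d_k$ admits an epimorphism from an object that lies in $\rep\Ga^d_k$ and is projective in $\Rep\Ga^d_k$ (with kernels staying in $\rep\Ga^d_k$), the paper then simply citing Verdier's criterion where you unpack the standard resolution/K-projectivity argument by hand. The only points deserving a word of care are the ones you partly flag yourself: justifying that a finite representation is finitely generated (via the projective generator $\Ga^{d,k^n}$, as in the proof of Theorem~\ref{th:schur}) and routing the comparison through $\D^-(\rep\Ga^d_k)$, since a bounded-above projective resolution need not lie in $\D^b(\rep\Ga^d_k)$.
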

\begin{proof}
  The proof of Theorem~\ref{th:schur} shows that $\Ga^{d,k^d}$ is a
  projective generator of $\Rep\Ga^d_k$ which belongs to
  $\rep\Ga^d_k$. Thus each object $X$ in $\rep\Ga^d_k$ admits an
  epimorphism $P\to X$ such that $P$ is a projective object and
  belongs to $\rep\Ga^d_k$. It follows that the inclusion
  $\rep\Ga^d_k\to\Rep\Ga^d_k$ induces a fully faithful functor
  $\D^b(\rep\Ga^d_k)\to\D(\Rep\Ga^d_k)$; see for instance
  \cite[Proposition~III.2.4.1]{Ver}.
\end{proof}

The duality $(\rep\Ga^d_k)^\op\xto{\sim}\rep\Ga^d_k$ taking $X$ to
$X^\circ$ is an exact functor and induces therefore a duality
\[\D^b(\rep\Ga^d_k)^\op\stackrel{\sim}\lto\D^b(\rep\Ga^d_k).\] Note that
$X^\diamond\cong X^\circ$ for all $X\in\D^b(\rep\Ga^d_k)$. 

Given $X,Y\in\D^b(\rep\Ga^d_k)$, the objects $X\lotimes_{\Ga^d_k}Y$
and $\RHOM_{\Ga^d_k}(X,Y)$ belong to $\D^b(\rep\Ga^d_k)$ provided that
$X$ is perfect. This observation yields the following immediate
consequence of Theorem~\ref{th:duality}.

\begin{cor}\label{co:duality}
  \pushQED{\qed} The functors $\La^d\lotimes_{\Ga^d_k}-$ and
  $\RHOM_{\Ga^d_k}(\La^d,-)$ induce mutually quasi-inverse
  equivalences \[\D^b(\rep\Ga^d_k)\stackrel{\sim}\lto\D^b(\rep\Ga^d_k).\qedhere\]
\end{cor}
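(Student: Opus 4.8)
The plan is to deduce Corollary~\ref{co:duality} directly from Theorem~\ref{th:duality} by checking that everything restricts from the big derived category $\D(\Rep\Ga^d_k)$ to the bounded derived category of finite representations. First I would recall the preceding observation that $X\lotimes_{\Ga^d_k}Y$ and $\RHOM_{\Ga^d_k}(X,Y)$ lie in $\D^b(\rep\Ga^d_k)$ whenever $X$ is perfect, and note that $\La^d$ itself is perfect (the resolution of Lemma~\ref{le:resol} exhibits it as a bounded complex of representations of the form $\Ga^\la$, which by Proposition~\ref{pr:proj} are finitely generated projective). Hence both $\La^d\lotimes_{\Ga^d_k}-$ and $\RHOM_{\Ga^d_k}(\La^d,-)$ carry $\D^b(\rep\Ga^d_k)$ into itself, using the fully faithful embedding $\D^b(\rep\Ga^d_k)\to\D(\Rep\Ga^d_k)$ from the lemma above to regard these as honest endofunctors of $\D^b(\rep\Ga^d_k)$.

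Next I would observe that the natural adjunction isomorphisms and the two chains of isomorphisms produced in the proof of Theorem~\ref{th:duality}, namely
\[\RHOM_{\Ga^d_k}(\La^d,\La^d\lotimes_{\Ga^d_k}X)\cong X\quad\text{and}\quad \La^d\lotimes_{\Ga^d_k}\RHOM_{\Ga^d_k}(\La^d,X)\cong X,\]
are natural in $X$ and built only from Hom-tensor identities, the isomorphism $(\La^d)^\diamond\cong\La^d$, and Proposition~\ref{pr:der-exterior}, all of which are available inside $\D^b(\rep\Ga^d_k)$ (recall $X^\diamond\cong X^\circ$ there, and the duality $(\rep\Ga^d_k)^\op\xto{\sim}\rep\Ga^d_k$ is exact). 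Therefore the restrictions of these two functors to $\D^b(\rep\Ga^d_k)$ are mutually quasi-inverse, which is exactly the claim.

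Since this is essentially a bookkeeping argument, there is no serious obstacle; the only point requiring a moment's care is that the functors genuinely preserve the subcategory $\D^b(\rep\Ga^d_k)$ — that is, that perfectness of $\La^d$ plus boundedness is enough to keep both $\La^d\lotimes_{\Ga^d_k}X$ and $\RHOM_{\Ga^d_k}(\La^d,X)$ bounded with finite cohomology when $X$ is itself a bounded complex of finite representations. This is handled by the remark immediately preceding the corollary together with the perfectness of $\La^d$, so the proof can legitimately be stated as an \textbf{immediate consequence of Theorem~\ref{th:duality}}, as the \texttt{qed}-pushed statement already signals.
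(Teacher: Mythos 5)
Your argument is correct and is essentially the paper's own: the corollary is stated as an immediate consequence of Theorem~\ref{th:duality}, using the perfectness of $\La^d$ (from the resolution of Lemma~\ref{le:resol}), the observation preceding the corollary that $X\lotimes_{\Ga^d_k}Y$ and $\RHOM_{\Ga^d_k}(X,Y)$ stay in $\D^b(\rep\Ga^d_k)$ when $X$ is perfect, and the fully faithful embedding $\D^b(\rep\Ga^d_k)\to\D(\Rep\Ga^d_k)$. Your write-up merely makes these restriction checks explicit, so no further comment is needed.
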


A consequence of Corollary~\ref{co:dualdual} is the following.

\begin{cor}
  The functor $\RHOM_{\Ga^d_k}(-,\La^d)$ induces an equivalence
  \[D\colon\D^b(\rep\Ga^d_k)^\op\stackrel{\sim}\lto\D^b(\rep\Ga^d_k)\] satisfying
  $D^2\cong\Id$.\qed
\end{cor}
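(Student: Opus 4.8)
The plan is to deduce the statement directly from Corollary~\ref{co:dualdual}, using the compatibility between the contravariant functor $\RHOM_{\Ga^d_k}(-,\La^d)$ and the duality on $\D^b(\rep\Ga^d_k)$. First I would observe that for $X\in\D^b(\rep\Ga^d_k)$ the object $\RHOM_{\Ga^d_k}(X,\La^d)$ again lies in $\D^b(\rep\Ga^d_k)$: indeed $\La^d$ is perfect (by the resolution in Lemma~\ref{le:resol}), so $\RHOM_{\Ga^d_k}(X,\La^d)\cong\RHOM_{\Ga^d_k}(\La^d,X^\diamond)\cong\RHOM_{\Ga^d_k}(\La^d,X^\circ)$ by Lemma~\ref{le:ddual1} together with $X^\diamond\cong X^\circ$, and the latter lies in $\D^b(\rep\Ga^d_k)$ by the remark preceding Corollary~\ref{co:duality} (since $X^\circ$ is a bounded complex of finite representations and $\La^d$ is perfect, we may equivalently apply Corollary~\ref{co:duality}). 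Thus $D=\RHOM_{\Ga^d_k}(-,\La^d)$ restricts to a well-defined functor $\D^b(\rep\Ga^d_k)^\op\to\D^b(\rep\Ga^d_k)$.

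Next I would verify that $D$ is an equivalence and that $D^2\cong\Id$. The cleanest route is to note the factorisation $D\cong (\La^d\lotimes_{\Ga^d_k}-)\comp(-)^\circ$ as functors $\D^b(\rep\Ga^d_k)^\op\to\D^b(\rep\Ga^d_k)$; here $(-)^\circ$ is the duality of $\D^b(\rep\Ga^d_k)$ discussed just before Corollary~\ref{co:duality}, which satisfies $((-)^\circ)^2\cong\Id$ on $\D^b(\rep\Ga^d_k)$ since evaluation $X\to X^{\circ\circ}$ is an isomorphism for finite $X$. By Corollary~\ref{co:duality}, $\La^d\lotimes_{\Ga^d_k}-$ is an autoequivalence of $\D^b(\rep\Ga^d_k)$, so $D$, being the composite of two equivalences (one contravariant, one covariant), is an equivalence $\D^b(\rep\Ga^d_k)^\op\xto{\sim}\D^b(\rep\Ga^d_k)$.

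For the identity $D^2\cong\Id$, I would feed this back into Corollary~\ref{co:dualdual}: that corollary gives $\RHOM_{\Ga^d_k}(\RHOM_{\Ga^d_k}(X,\La^d),\La^d)\cong X^{\diamond\diamond}$ naturally in $X\in\D(\Rep\Ga^d_k)$, hence in particular for $X\in\D^b(\rep\Ga^d_k)$. The left-hand side is exactly $D^2 X$ (the inner and outer applications of $\RHOM_{\Ga^d_k}(-,\La^d)$ both stay inside $\D^b(\rep\Ga^d_k)$ by the first paragraph), while $X^{\diamond\diamond}\cong X^{\circ\circ}\cong X$ for $X\in\D^b(\rep\Ga^d_k)$ since $(-)^\diamond$ agrees with $(-)^\circ$ there and $(-)^\circ$ is a duality of order two on finite representations. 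This yields a natural isomorphism $D^2\cong\Id$.

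The only point that requires a little care — and hence the main (though modest) obstacle — is bookkeeping about where objects live: one must check that applying $\RHOM_{\Ga^d_k}(-,\La^d)$ twice never leaves $\D^b(\rep\Ga^d_k)$, so that the isomorphism of Corollary~\ref{co:dualdual}, a priori stated in the ambient $\D(\Rep\Ga^d_k)$, actually computes $D^2$ as an endofunctor of $\D^b(\rep\Ga^d_k)$. This follows from the perfectness of $\La^d$ and the identity $X^\diamond\cong X^\circ$ on the bounded derived category of finite representations, together with the fact that $\D^b(\rep\Ga^d_k)\to\D(\Rep\Ga^d_k)$ is fully faithful, so that an isomorphism in the big category between objects of the small one is an isomorphism in the small one; naturality is inherited likewise.
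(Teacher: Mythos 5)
Your main line of argument is exactly the paper's intended (unwritten) proof and it is correct: you deduce $D^2\cong\Id$ from Corollary~\ref{co:dualdual} together with $X^\diamond\cong X^\circ$ and $X^{\circ\circ}\cong X$ on $\D^b(\rep\Ga^d_k)$, and you supply the bookkeeping the paper leaves implicit, namely that $\RHOM_{\Ga^d_k}(-,\La^d)$ preserves $\D^b(\rep\Ga^d_k)$ because $\RHOM_{\Ga^d_k}(X,\La^d)\cong\RHOM_{\Ga^d_k}(\La^d,X^\diamond)\cong\RHOM_{\Ga^d_k}(\La^d,X^\circ)$ with $\La^d$ perfect (Lemmas~\ref{le:resol} and \ref{le:ddual1}, using $(\La^d)^\diamond\cong\La^d$).

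There is, however, one incorrect step: the factorisation $D\cong(\La^d\lotimes_{\Ga^d_k}-)\comp(-)^\circ$ claimed in your second paragraph is false. Your own first paragraph gives the correct factorisation $D\cong\RHOM_{\Ga^d_k}(\La^d,-)\comp(-)^\circ$, equivalently $D\cong(-)^\circ\comp(\La^d\lotimes_{\Ga^d_k}-)$ by Lemma~\ref{le:ddual}; you have silently replaced $\RHOM_{\Ga^d_k}(\La^d,-)$ by $\La^d\lotimes_{\Ga^d_k}-$, but these are mutually quasi-inverse equivalences, not isomorphic ones --- their composite is $S^d\lotimes_{\Ga^d_k}-$ by Proposition~\ref{pr:der-exterior} (the Serre-type functor of Corollary~\ref{co:serre}), which is not the identity in general. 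Concretely, in the paper's example $k=\bbF_2$, $d=2$: since $\Ga^2$ is the projective tensor unit, $D(\Ga^2)=\RHOM_{\Ga^2_k}(\Ga^2,\La^2)\cong\La^2=\so$, whereas $\La^2\lotimes_{\Ga^2_k}(\Ga^2)^\circ=\La^2\lotimes_{\Ga^2_k}S^2$ already has $H^0\cong\La^2\otimes_{\Ga^2_k}S^2\cong\soa\neq\so$ by the multiplication table. The slip is harmless for the corollary: the corrected factorisation still exhibits $D$ as a composite of the duality $(-)^\circ$ with the equivalence of Corollary~\ref{co:duality}, and in any case the natural isomorphism $D^2\cong\Id$ by itself already forces the contravariant functor $D$ to be an equivalence. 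So your proof stands once that one isomorphism is replaced.
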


\subsection*{Finite global dimension}

The Schur algebra $S_k(n,d)$ has finite global dimension provided that
$k$ is a field or $k=\bbZ$. This is a classical fact \cite{AB1988,
  Do1986} and follows also from the more general fact that Schur
algebras are quasi-hereditary; precise dimensions were computed by
Totaro \cite{To1997}. We extend this result as follows.

\begin{prop}\label{pr:gldim}
  Suppose that $k$ is noetherian and let $X,Y\in\rep\Ga^d_k$. Then
  $\Ext^i_{\Ga^d_k}(X,Y)=0$ for all $i > 2d$.
\end{prop}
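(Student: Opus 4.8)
The plan is to bound the projective dimension of each finitely generated projective-over-$k$ representation uniformly, and then deduce the vanishing of high Ext. First I would observe that $\Rep\Ga^d_k$ has enough projectives, and by Proposition~\ref{pr:proj} every finitely generated projective object is a summand of a finite direct sum of the functors $\Ga^\la$ with $\la\in\La(n,d)$; moreover the proof of Theorem~\ref{th:schur} shows that $\Ga^{d,k^d}=\bigoplus_{\la\in\La(d,d)}\Ga^\la$ is a single projective generator lying in $\rep\Ga^d_k$. So the claim $\Ext^i_{\Ga^d_k}(X,Y)=0$ for $i>2d$ will follow once I show that every $X\in\rep\Ga^d_k$ admits a projective resolution of length at most $2d$ by objects in $\add\{\Ga^\la\}$ (these are finitely generated projective over $k$, so the resolution stays inside $\rep\Ga^d_k$), because then $\Ext^i_{\Ga^d_k}(X,Y)$ is computed by a complex that is zero in degrees $>2d$.

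The key input for the length bound is the derived Koszul self-equivalence of Corollary~\ref{co:duality} together with Lemma~\ref{le:resol}. I would argue as follows. The functors $S^\la$ with $\la\in\La(n,d)$, $n\ge d$, form a family whose additive closure contains all the \emph{Weyl-filtered} (equivalently, the costandard-filtered) objects; dually the $\Ga^\la$ generate the standard-filtered objects. The resolution in Lemma~\ref{le:resol} of $S^d V$ has length $d$ and consists of functors of the form $\bigoplus \La^{i_1}\otimes\cdots\otimes\La^{i_r}=\La^\mu$; applying $\RHOM_{\Ga^d_k}(\La^d,-)$ to it and using Corollary~\ref{co:exterior}, which identifies $\La^d\otimes_{\Ga^d_k}\La^\mu\cong S^\mu$ and hence (after dualising via Lemma~\ref{le:dual}) $\RHOM_{\Ga^d_k}(\La^d,\La^\mu)\cong\Ga^\mu$ up to the self-duality $(\La^\mu)^\circ\cong\La^\mu$, I get a length-$d$ resolution of $\Ga^d$ itself, and more generally of every $\Ga^\la$, by objects in $\add\{\Ga^\mu\}$ — wait, this produces $\Ga^\mu$'s again, so instead the point is the complementary statement: Lemma~\ref{le:resol} directly gives that each $\La^\la$, hence (applying the Koszul equivalence) each $S^\la$, has a length-$d$ resolution by $\Ga^\mu$'s, i.e. by finitely generated projectives. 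Dually, each $\Ga^\la$ has an \emph{injective} coresolution of length $d$ by the $S^\mu$'s. Therefore $\pd_{\Ga^d_k}\Ga^\la\le d$ is automatic but the relevant bound is $\pd_{\Ga^d_k} S^\la\le d$ and, by induction on the length of a Weyl filtration, $\pd_{\Ga^d_k} X\le 2d$ for arbitrary $X\in\rep\Ga^d_k$: one filters $X$ with at most $\ldots$ subquotients among the $\Ga^\la$ (using that $\Rep\Ga^d_k\simeq\Mod S_k(n,d)$ is quasi-hereditary with $d+1$ weights, so a $\Delta$-filtration of the projective cover of $X$ has controlled length), combines with $\pd\Ga^\la\le d$, and adds the length of the filtration — which is itself bounded by $d$ — to get $2d$.

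Concretely, I would run the induction on $\Ext$ directly rather than on $\pd$. Take a projective cover $P\to X$ with $P\in\add\{\Ga^\la\mid\la\in\La(d,d)\}$, and let $\Omega X$ be the kernel, which again lies in $\rep\Ga^d_k$; then $\Ext^i_{\Ga^d_k}(X,Y)\cong\Ext^{i-1}_{\Ga^d_k}(\Omega X,Y)$ for $i\ge2$. Iterating, it suffices to see that the $d$-th syzygy $\Omega^d X$ is already projective for $X$ in a generating class, plus at most $d$ more steps in general; combining gives $\Omega^{2d}X$ projective, hence $\Ext^i_{\Ga^d_k}(X,Y)=0$ for $i>2d$. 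The main obstacle is pinning down the length-$d$ part cleanly: one must check that the complex obtained from Lemma~\ref{le:resol} by applying $\RHOM_{\Ga^d_k}(\La^d,-)$ really is a projective resolution in $\Rep\Ga^d_k$ (it is a complex of perfect objects, quasi-isomorphic to $\Ga^d$ by Theorem~\ref{th:duality}, and each term is in $\add\{\Ga^\mu\}$ by Corollary~\ref{co:exterior} via the duality of Lemma~\ref{le:dual}, so exactness is the only thing to verify, and that follows from exactness of the Koszul complex in Lemma~\ref{le:resol} plus exactness of $\RHOM_{\Ga^d_k}(\La^d,-)$ since $\La^d$ is perfect). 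Granting that, and that every finitely generated $X$ is built from at most $d+1$ "layers" of such summands with the filtration length contributing the second $d$, the bound $i>2d$ drops out; here the noetherian hypothesis on $k$ is used only to ensure $\Ext^i_{\Ga^d_k}(X,Y)$ is the right finitely generated thing and that kernels of maps between objects of $\rep\Ga^d_k$ stay in $\rep\Ga^d_k$.
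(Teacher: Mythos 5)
Your route is genuinely different from the paper's, but it has serious gaps. The structural step is wrong: a general object of $\rep\Ga^d_k$ does \emph{not} admit a filtration with subquotients among the $\Ga^\la$ (equivalently, is not $\Delta$-filtered); only very special modules are. Moreover the weight poset of $S_k(n,d)$ for $n\ge d$ is the set of partitions of $d$, so it has $p(d)$ elements, not $d+1$, and the soft quasi-hereditary bound one could extract along these lines is $2(p(d)-1)$, far weaker than $2d$. Your arithmetic also conflates filtrations with resolutions: if $X$ really were filtered by objects of projective dimension at most $d$, then already $\pd X\le d$ (filtration lengths do not add to projective dimension); what does add is the length of a \emph{resolution} of $X$ by such objects, and bounding that is exactly the hard point. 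The sharp statement that the Schur algebra over a field has global dimension at most $2d$ is Totaro's theorem \cite{To1997}, which the paper simply quotes; it is not recoverable from the sketch you give. A smaller but real problem: ``exactness of $\RHOM_{\Ga^d_k}(\La^d,-)$ since $\La^d$ is perfect'' is not available --- $\La^d$ is perfect but not projective, so $\HOM_{\Ga^d_k}(\La^d,-)$ is not exact, and whether it carries the Koszul resolution of $S^d$ to an exact complex is itself an Ext-vanishing statement that needs proof.

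Even granting a bound over fields, the content of the proposition is the passage to an arbitrary noetherian $k$, and here you misplace the hypothesis. The paper's proof base-changes: $S_{k'}(n,d)\cong S_k(n,d)\otimes_k k'$, invokes Totaro's bound over every residue field $k(\frp)$, and then applies Lemma~\ref{le:gldim} (a completion and projective-cover argument --- this is precisely where noetherianness enters) to conclude that the $2d$-th syzygy of $X$ over $S_k(d,d)$, which lies in $\rep\Ga^d_k$ because an epimorphism onto a $k$-projective object is $k$-split, becomes projective, whence $\Ext^i_{\Ga^d_k}(X,Y)=0$ for $i>2d$. Your plan to build length-$2d$ projective resolutions directly over $k$ has no mechanism replacing this residue-field-to-$k$ lifting, and your closing remark that noetherianness is needed ``only'' for finite generation and kernels misses where the actual difficulty sits.
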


The proof is based on the following elementary lemma which is taken
from \cite{CPS1990}; it also explains the noetherianess
hypothesis. For a prime ideal $\frp\subseteq k$, let $k(\frp)$ denote
the residue field $k_\frp/{\frp_\frp}$.

\begin{lem}\label{le:gldim}
  Let $A$ be a noetherian $k$-algebra and $M$ a finitely generated
  $A$-module. Suppose that $A$ and $M$ are $k$-projective.  Then $M$
  is projective over $A$ if and only if $M\otimes _k k(\frp)$ is
  projective over $A\otimes _k k(\frp)$ for all prime ideals
  $\frp\subseteq k$.
\end{lem}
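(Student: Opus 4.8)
The plan is to prove Lemma~\ref{le:gldim} by a standard base-change argument combined with Nakayama's lemma applied to the minimal projective resolution. The ``only if'' direction is trivial: if $M$ is projective over $A$, then $M$ is a direct summand of a finite free $A$-module, and applying the right exact functor $-\otimes_k k(\frp)$ preserves this splitting, so $M\otimes_k k(\frp)$ is a direct summand of a finite free $A\otimes_k k(\frp)$-module, hence projective.

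For the ``if'' direction, I would first choose a finite free presentation $F_1 \xrightarrow{\phi} F_0 \to M \to 0$ over $A$, with $F_0$ mapping minimally onto $M$ (this uses that $A$ is noetherian and $M$ finitely generated). Set $N = \ker(F_0 \to M)$; since $A$ and $M$ are $k$-projective and $k$-projective modules are $k$-flat, the sequence $0 \to N \to F_0 \to M \to 0$ stays exact after $-\otimes_k k(\frp)$, giving $0 \to N\otimes_k k(\frp) \to F_0\otimes_k k(\frp) \to M\otimes_k k(\frp) \to 0$. The hypothesis says the right-hand term is projective over $A\otimes_k k(\frp)$, so this sequence splits; in particular $N\otimes_k k(\frp)$ is a direct summand of a finite free module, hence finitely generated over $A\otimes_k k(\frp)$, and moreover $\mathrm{Tor}_1^A(M, k(\frp)) = \mathrm{Tor}_1^A(M, A\otimes_k k(\frp)) = 0$ for every $\frp$ by dimension shifting (the Tor over $A$ agrees with the Tor computed after the flat base change $k \to $ the appropriate localization, then reduction).

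Next I would show $M$ is $A$-projective by proving $\mathrm{Ext}^1_A(M, L) = 0$ for all $A$-modules $L$, or more directly that the presentation splits. The cleanest route: since $M$ is $k$-projective and $A$ is $k$-projective, for any $A$-module $L$ one has (by the standard spectral sequence / base change for Ext, valid because everything in sight is suitably flat over $k$) a way to detect the vanishing of $\mathrm{Ext}^1_A(M,L)$ after passing to residue fields. Concretely, I expect to use that $M$ is projective over $A$ iff $\mathrm{Tor}_1^A(M, S) = 0$ for all simple $A$-modules $S$ — but since we work relative to $k$, the right statement is that it suffices to check $\mathrm{Tor}_1^{A\otimes_k k(\frp)}(M\otimes_k k(\frp), -) = 0$ for all $\frp$, which we have. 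Then a Nakayama-type argument over the noetherian base $k$ (or really over each local ring $k_\frp$) upgrades the pointwise vanishing to $N$ being projective: $N$ is finitely generated and $k$-projective, $N\otimes_k k(\frp)$ is $A\otimes_k k(\frp)$-projective for all $\frp$ (being a summand of a free module), so by induction on the structure $N$ itself is projective, whence the sequence $0 \to N \to F_0 \to M \to 0$ splits and $M$ is projective.

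The main obstacle is making the base-change comparison for $\mathrm{Tor}$ and $\mathrm{Ext}$ precise and keeping track of the two-step reduction $k \rightsquigarrow k_\frp \rightsquigarrow k(\frp)$: one must verify that $\mathrm{Tor}^A_i(M, -)$ localizes correctly (harmless, as localization is exact) and that reduction modulo $\frp_\frp$ interacts well with $\mathrm{Tor}$ over $A$, which requires the flatness of $M$ and $A$ over $k$ in an essential way — this is exactly where the $k$-projectivity hypotheses on $A$ and $M$ are used, and why noetherianness of $k$ (hence of $A$, and finite generation being preserved) is needed for the Nakayama step to terminate. Once the base-change dictionary is in place, the projectivity criterion ``$M$ is $A$-projective iff $\mathrm{Tor}_1^A(M, A/\frm) = 0$ for all maximal left ideals, for $M$ finitely generated over noetherian $A$'' finishes the argument.
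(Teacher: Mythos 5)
Your outline breaks down at its final step, and the breakdown is not cosmetic. You conclude ``$N$ itself is projective, whence the sequence $0\to N\to F_0\to M\to 0$ splits and $M$ is projective'': the splitting of that sequence is equivalent to projectivity of $M$, not of $N$ (over $\bbZ$, the sequence $0\to p\bbZ\to\bbZ\to\bbZ/p\to 0$ has projective kernel and does not split). Moreover, the way you obtain projectivity of $N$ --- ``by induction on the structure'' --- has no induction parameter: $N$ is finitely generated, $k$-projective (the sequence is $k$-split since $M$ is $k$-projective), and its fibres $N\otimes_k k(\frp)$ are projective over $A\otimes_k k(\frp)$, so $N$ satisfies exactly the hypotheses of the lemma; you are invoking the lemma to prove the lemma. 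A smaller point in the same vein: the vanishing $\Tor_1^A(M,A\otimes_k k(\frp))=0$ is automatic from $k$-flatness of $M$ (compute it from an $A$-projective resolution of $M$ and note the tensor is really over $k$), so it encodes none of the hypothesis; the nontrivial information sits in $\Tor$ over the fibre algebra $A\otimes_k k(\frp)$, and transporting it back to projectivity over $A$ is the whole content of the statement.

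The criterion you lean on --- ``a finitely generated module over a noetherian ring is projective iff $\Tor_1$ against the simple (residue-field) modules vanishes'' --- is precisely what has to be proved here, and for a noncommutative $k$-algebra it is not an off-the-shelf fact: the standard versions need a local/semiperfect situation in which minimal presentations (projective covers) exist. This is also why your opening move, a free presentation ``with $F_0$ mapping minimally onto $M$'', is not available: after reduction modulo a maximal ideal $\frm$ of $k$, the module $M\otimes_k k(\frm)$ is projective but in general not free over $A\otimes_k k(\frm)$, so no free $F_0$ makes $F_0\otimes_k k(\frm)\to M\otimes_k k(\frm)$ an isomorphism, and without that isomorphism the Nakayama step has nothing to bite on. The paper's proof consists exactly of the reduction you are missing: since $M$ is finitely presented, projectivity over $A$ may be checked after localizing $k$ at a prime, and, $k$ being noetherian, one may further pass to the completion by faithfully flat descent; over a complete local base $A$ is semiperfect, so a projective cover $P\to M$ exists, it reduces to a projective cover of $M\otimes_k k(\frm)$, which is an isomorphism by the hypothesis, and then the $k$-split exact sequence $0\to K\to P\to M\to 0$ gives $K/\frm K=0$, whence $K=0$ by Nakayama ($\frm A\subseteq\rad A$, $K$ finitely generated). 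To repair your proposal you would need to add the localization-plus-completion step, replace the ``minimal free presentation'' by a projective cover over the (now semiperfect) algebra, and replace the final splitting argument by this Nakayama argument on the kernel.
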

\begin{proof}
  One direction is clear. So suppose that $M\otimes _k k(\frp)$
  is projective over $A\otimes _k k(\frp)$ for all $\frp$.
  It suffices to prove the assertion when $k$ is local with maximal
  ideal $\frm$, and we may assume that $k$ is complete since $k$ is
  noetherian. Thus $A$ is semi-perfect and a projective cover $P\to
  M\otimes _k k(\frm)$ lifts to a projective cover $P\to M$, which is
  an isomorphism since $P\otimes _k k(\frm)\to M\otimes _k k(\frm)$ is
  one. It follows that $M$ is projective over $A$.
\end{proof}

\begin{proof}[Proof of Proposition~\ref{pr:gldim}]
We work in the category of modules over $S_k(d,d)$ which is equivalent to
$\Rep\Ga^d_k$ by Theorem~\ref{th:schur}. Note that for all $n\ge 1$
and each ring homomorphism $k\to k'$
\[S_{k'}(n,d)\cong S_k(n,d)\otimes_k k'.\] This is a consequence of the
base change formula \[\Ga_{k'}^d(V\otimes_k k')\cong (\Ga^d_k
V)\otimes_k k'\] 
which holds for each $V\in\P_k$. The global dimension of $S_{k(\frp)}(d,d)$ is
bounded by $2d$ for all prime ideals $\frp\subseteq k$, by
\cite[Theorem~2]{To1997}. Thus the assertion follows from
Lemma~\ref{le:gldim}.
\end{proof}

\subsection*{Schur and Weyl functors}

Fix a positive integer $d$. A \emph{partition} of \emph{weight} $d$ is
a sequence $\la=(\la_1,\la_2,\ldots )$ of non-negative integers
satisfying $\la_1\ge\la_2\ge \ldots$ and $\sum\la_i=d$. Its
\emph{conjugate} $\la'$ is the partition where $\la'_i$ equals the
number of terms of $\la$ that are greater or equal than $i$.  

Fix a partition $\la$ of weight $d$. Each integer $r\in\{1,\ldots,d\}$
can be written uniquely as sum $r=\la_1+\ldots\la_{i-1}+j$ with $1\le j\le
\la_i$. The pair $(i,j)$ describes the position ($i$th row and $j$th
column) of $r$ in the \emph{Young diagram} corresponding to $\la$. The
partition $\la$ determines a permutation $\s_\la\in\frS_d$ by
$\s_\la(r)=\la'_1+\ldots\la'_{j-1}+i$, where $1\le i\le\la_j$. Note
that $\s_{\la'}=\s_\la^{-1}$.

Fix a partition $\la$ of weight $d$, and assume that $\la_1+\cdots
+\la_n=d=\la'_1+\cdots +\la'_m$. Following \cite[II.1]{ABW1982}, there
is defined for $V\in\P_k$ the \emph{Schur module} $S_\la V$ as image
of the map
\[\La^{\la'_1}V\otimes\cdots\otimes
\La^{\la'_m}V\xto{\Delta\otimes\cdots\otimes\Delta} V^{\otimes
  d}\xto{s_{\la'}}V^{\otimes d}\xto{\nabla\otimes\cdots\otimes\nabla}
S^{\la_1}V\otimes\cdots\otimes S^{\la_n}V.\] Here, we denote for an
integer $r$ by $\Delta\colon\La^{r}V\to V^{\otimes r}$ the
component of the comultiplication given by
\[\Delta(v_1\wedge\cdots\wedge v_{r})=\sum_{\s\in\frS_{r}}
{\sgn(\s)}v_{\s(1)}\otimes\cdots\otimes v_{\s(r)},\]
$\nabla\colon V^{\otimes r}\to S^r V$ is the multiplication, and
$s_\la\colon V^{\otimes d}\to V^{\otimes d}$ is given by
\[s_\la(v_1\otimes\cdots\otimes v_d)=v_{\s_\la(1)}\otimes\cdots\otimes
v_{\s_\la(d)}.\]
The corresponding \emph{Weyl module} $W_\la$ is by definition the
image of the analogous map
\[\Ga^{\la_1}V\otimes\cdots\otimes
\Ga^{\la_n}V\xto{\Delta\otimes\cdots\otimes\Delta} V^{\otimes
  d}\xto{s_\la}V^{\otimes d}\xto{\nabla\otimes\cdots\otimes\nabla}
\La^{\la'_1}V\otimes\cdots\otimes \La^{\la'_m}V.\] Note that
$(W_\la V)^*\cong S_{\la}(V^*)$. Moreover, $S_\la V$ is free when $V$
is free, by \cite[Theorem~II.2.16]{ABW1982}. Thus $S_\la V$ and $W_\la
V$ belong to $\P_k$ for all $V\in\P_k$.

The definition of Schur and Weyl modules gives rise to functors $S_\la$
and $W_\la$ in $\Rep\Ga^d_k$ for each partition $\la$ of weight $d$.
In \cite[\S4]{AB1988}, a finite resolution $\mathbf\Gamma(W_\la)$ of
$W_\la$ in terms of divided powers is constructed, and in
\cite[Theorem~6.1]{AB1988} it is shown that
$\La^d\otimes_{\Ga^d_k}\mathbf\Gamma(W_\la)$ is a resolution of
$S_{\la'}$, basically using an explicit description of the functor
\[\add\{\Ga^\mu\mid\mu\in\La(n,d)\}\stackrel{\sim}\lto
\add\{\La^\mu\mid\mu\in\La(n,d)\}\] from Corollary~\ref{co:equiv}.
Summarising this discussion, we have the following result.

\begin{prop}\label{pr:schur}
\pushQED{\qed} For each partition $\la$ of weight $d$, there is an isomorphism
\[\La^d\lotimes_{\Ga^d_k}W_\la\cong S_{\la'}.\qedhere\]
\end{prop}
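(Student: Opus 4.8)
The plan is to represent $\La^d\lotimes_{\Ga^d_k}W_\la$ by an explicit complex and then to identify its cohomology with $S_{\la'}$ by invoking the resolutions of Akin and Buchsbaum. First I would recall from \cite[\S4]{AB1988} the finite resolution $\mathbf\Gamma(W_\la)\to W_\la$ in terms of divided powers: each of its terms is a finite direct sum of representations of the form $\Ga^\mu$ with $\mu$ a composition of $d$, hence a finitely generated projective object of $\Rep\Ga^d_k$ by Proposition~\ref{pr:proj}. Thus $\mathbf\Gamma(W_\la)$ is a bounded complex of projectives, so we may take $\mathbf p W_\la=\mathbf\Gamma(W_\la)$ and therefore
\[\La^d\lotimes_{\Ga^d_k}W_\la\;=\;\La^d\otimes_{\Ga^d_k}\mathbf\Gamma(W_\la)\]
in $\D(\Rep\Ga^d_k)$; no further choice of resolution is needed.

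Next I would apply the non-derived functor $\La^d\otimes_{\Ga^d_k}-$ termwise. By Corollary~\ref{co:equiv} this functor restricts to an equivalence $\add\{\Ga^\mu\}\xto{\sim}\add\{\La^\mu\}$ which, by Proposition~\ref{pr:int-ext}, sends $\Ga^\mu$ to $\La^\mu$; hence $\La^d\otimes_{\Ga^d_k}\mathbf\Gamma(W_\la)$ is a bounded complex built from the representations $\La^\mu$. It remains to recognise this complex as a resolution of $S_{\la'}$, and this is exactly the content of \cite[Theorem~6.1]{AB1988}, where $\La^d\otimes_{\Ga^d_k}\mathbf\Gamma(W_\la)$ is shown to be acyclic in nonzero degrees with $H^0\cong S_{\la'}$. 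Granting this, $\La^d\otimes_{\Ga^d_k}\mathbf\Gamma(W_\la)\cong S_{\la'}$ in $\D(\Rep\Ga^d_k)$, which proves the proposition.

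The main obstacle is the final identification: matching the combinatorially defined differentials of $\mathbf\Gamma(W_\la)$ and of the Schur resolution with the functor $\La^d\otimes_{\Ga^d_k}-$ as described here. Concretely, one must check that the maps $\Delta$, $s_\la$ and $\nabla$ occurring in the definitions of $W_\la$ and $S_{\la'}$ behave correctly under the equivalence $\Ga^\mu\mapsto\La^\mu$, and in particular that the exchange of divided and exterior powers is precisely what forces the conjugate partition $\la'$ to appear on the right-hand side. This bookkeeping is supplied by \cite[\S4--6]{AB1988}; everything else in the argument is formal, resting only on Corollary~\ref{co:equiv} and the fact that a bounded complex of projectives computes the derived tensor product.
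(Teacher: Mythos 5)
Your argument is correct and coincides with the paper's: the paper likewise deduces the proposition from the Akin--Buchsbaum resolution $\mathbf\Gamma(W_\la)$ of \cite[\S4]{AB1988} (a bounded complex of projectives, so it computes $\La^d\lotimes_{\Ga^d_k}W_\la$) together with \cite[Theorem~6.1]{AB1988}, which identifies $\La^d\otimes_{\Ga^d_k}\mathbf\Gamma(W_\la)$ as a resolution of $S_{\la'}$ via the equivalence of Corollary~\ref{co:equiv}. Your added remark that the combinatorial bookkeeping with $\Delta$, $s_\la$, $\nabla$ is exactly what is supplied by \cite{AB1988} matches the paper's intent, which states the proposition as a summary of that discussion.
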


The functor $\La^d\lotimes_{\Ga^d_k}-$ gives an equivalence
$\D(\Rep\Ga^d_k)\xto{\sim}\D(\Rep\Ga^d_k)$, by
Theorem~\ref{th:duality}. Thus the classical formula
\[\Ext_{\Ga^d_k}^*(W_\la,W_\mu)\cong\Ext_{\Ga^d_k}^*(S_{\la'},S_{\mu'})\]
due to Akin--Buchsbaum \cite[Theorem~7.7]{AB1988} and Donkin
\cite[Corollary~3.9]{Do1993} follows.

\subsection*{An explicit example}

Set $k=\bbF_2$. We compute $\rep\Ga^2_k$. This is an abelian
\emph{length category}, that is, each object has finite composition
length. The simple objects are indexed by the partitions $\a=(2)$ and
$\om=(1,1)$. We describe the indecomposable objects by specifying the
factors of a composition series; see also \cite{Er1993} for a
description of the corresponding Schur algebra which is of finite
representation type.  The indecomposable projective objects are
\[\Ga^2=\Ga^\a=\sao\quad\text{and}\quad \Ga^\om=\soao\] 
and the indecomposable injective objects are
\[S^2=S^\a=\soa\quad\text{and}\quad S^\om=\soao.\]
The resolutions from Lemma~\ref{le:resol} have the form
\begin{gather*}
0\lto\Ga^\a\lto\Ga^\om\lto\La^\a\lto 0\\
0\lto\La^\a\lto\La^\om\lto S^\a\lto 0
\end{gather*}
and therefore
\[\La^2=\La^\a=\so\quad\text{and}\quad \La^\om=\soao.\]
The Schur and Weyl functors are 
\[S_\a=\soa\qquad S_\om=\so\qquad W_\a=\sao\qquad W_\om=\so.\]
The decomposition \eqref{eq:decomp} of $\Ga^{2,k^2}_k$ has the form
\[\Ga^{2,k^2}_k=\Ga^\a\oplus\Ga^\om\oplus\Ga^\a\]
and the endomorphism algebra of $\Ga^{2,k^2}_k$ is the Schur algebra
$S_k(2,2)$. The functor $\Hom_{\Ga^2_k}(\Ga^{2,k^2}_k,-)$ induces an
equivalence $\rep\Ga^2_k\xto{\sim} \mod S_k(2,2)$.  The following
table gives for each pair of indecomposable representations in
$\rep\Ga^2_k$ their tensor product.
\begin{figure}[!h]
\[\begin{array}{|c||ccccc|}
\hline
\otimes_{\Ga^2_k}&\sa&\so&\sao&\soa&\soao\\
\hline\hline
\sa&\sa&0&\sa&0&0\\
\so&0&\soa&\so&\soa&\soao\\
\sao&\sa&\so&\sao&\soa&\soao\\
\soa&0&\soa&\soa&\soa&\soao\\
\soao&0&\soao&\soao&\soao&\soao\!\oplus\!\soao\\
\hline
\end{array}\]
\end{figure}

I am grateful to Dieter Vossieck for pointing out the following
connection.

\begin{rem}[(Vossieck)]
  Let $k[\e]$ denote the $k$-algebra of dual numbers ($\e^2=0$). The
  category $\rep\Ga^2_k$ is equivalent to the category of finitely
  generated modules over the Auslander algebra \cite{Au1971} of
  $k[\e]$. The category of $k[\e]$-modules admits (at least) three
  different tensor products which induce tensor products on the
  category of modules over the Auslander algebra (via Day
  convolution): We have $-\otimes_{k[\e]}-$ and $-\otimes_k-$, where
  in the second case the action of $k[\e]$ is either given by
  $\e\mapsto \e\otimes 1 + 1\otimes\e+\e\otimes\e$ (the group scheme
  $\mu_2$) or by $\e\mapsto \e\otimes 1 + 1\otimes\e$ (the group
  scheme $\a_2$). The last case yields a tensor product which is
  equivalent to $-\otimes_{\Ga^2_k}-$.
\end{rem}

\section{Koszul versus Ringel and Serre duality}

In this section we explain the connection between the Koszul duality
from Theorem~\ref{th:duality} and Ringel duality for Schur algebras,
as developed in work of Ringel \cite{Ri1991} and Donkin
\cite{Do1993}. Then we show that Koszul duality applied twice yields a
Serre functor in the sense of Reiten and Van den Bergh \cite{RV2002}.

\subsection*{Ringel duality}

Fix integers $d,n\ge 1$ with $n\ge d$ and recall from
Theorem~\ref{th:schur} that evaluation at $k^n$ gives an
equivalence
\[\Hom_{\Ga^d_k}(\Ga^{d,k^n},-)\colon\Rep\Ga^d_k\stackrel{\sim}\lto\Mod S_k(n,d).\]
The module
\[T=\Hom_{\Ga^d_k}(\Ga^{d,k^n},\La^{d,k^n})\] is the characteristic
tilting module\footnote{A module $T$ over any ring $A$ is a
  \emph{tilting module} iff $\RHom_A(T,-)$ induces an equivalence
  $\D(\Mod A)\xto{\sim} \D(\Mod\End_A(T))$. If $A$ is
  quasi-hereditary, then this extra structure determines a
  \emph{characteristic tilting module}. Its endomorphism ring $A'$ is
  called \emph{Ringel dual} of $A$; it is again quasi-hereditary and
  $A''$ is Morita equivalent to $A$; see \cite{Ri1991}. Note that
  Schur algebras are quasi-hereditary.}  for the Schur algebra
$S_k(n,d)$; see \cite{Do1993,Ri1991}. In particular, it induces an
equivalence
\[\RHom_{S_k(n,d)}(T,-)\colon\D(\Mod S_k(n,d))\stackrel{\sim}\lto\D(\Mod\End_{S_k(n,d)}(T)).\] 
The connection between this equivalence and the equivalence of
Theorem~\ref{th:duality} is explained as follows. First observe that
\[\La^{d,k^n}=\La^d\otimes_{\Ga^d_k}\Ga^{d,k^n}.\] 
It follows that the composite of the equivalences
$\La^d\lotimes_{\Ga^d_k}-$ and $\Hom_{\Ga^d_k}(\Ga^{d,k^n},-)$ induces an
isomorphism
\[\p\colon
S_k(n,d)=\End_{\Ga^d_k}(\Ga^{d,k^n})\xto{\sim}\End_{\Ga^d_k}(\La^{d,k^n}) \xto{\sim}\End_{S_k(n,d)}(T).\]

\begin{thm}\label{th:Ringel}
The following diagram commutes up to a natural isomorphism.
\[\xymatrixrowsep{3pc} \xymatrixcolsep{2.5pc} 
\xymatrix{\D(\Rep\Ga^d_k)\ar[d]_-\wr^-{\RHom(\Ga^{d,k^n},-)}\ar[rrr]_-\sim^-{\RHOM(\La^d,-)}&&&\D(\Rep\Ga^d_k)\ar[d]^-\wr_-{\RHom(\Ga^{d,k^n},-)}\\
  \D(\Mod S_k(n,d)) \ar[rr]_-\sim^-{\RHom(T,-)}&&\D(\Mod\End(T))
  \ar[r]_-\sim^-{\p_*}&\D(\Mod S_k(n,d)) }\]
\end{thm}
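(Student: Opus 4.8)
The plan is to check commutativity by evaluating both composites on an arbitrary complex $X \in \D(\Rep\Ga^d_k)$ and producing a natural isomorphism between the results, using that all four functors in the square are right adjoints (equivalently, left adjoints) so that it suffices to compare on a set of compact generators, and then to exhibit the isomorphism directly on the representable generators $\Ga^{d,k^n}$ and on the tilting object $\La^{d,k^n}$.

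First I would set up the identifications. Under the evaluation equivalence $E = \RHom_{\Ga^d_k}(\Ga^{d,k^n},-)\colon \D(\Rep\Ga^d_k) \xto{\sim} \D(\Mod S_k(n,d))$, the projective generator $\Ga^{d,k^n}$ corresponds to $S_k(n,d)$ as a module over itself, and $\La^{d,k^n} = \La^d \otimes_{\Ga^d_k} \Ga^{d,k^n}$ corresponds to the characteristic tilting module $T$; moreover $E$ transports the identity automorphism of $\La^{d,k^n}$ to the isomorphism $\End_{S_k(n,d)}(T) \cong \End_{\Ga^d_k}(\La^{d,k^n})$. The Ringel equivalence $\RHom_{S_k(n,d)}(T,-)$ sends $T$ to $\End_{S_k(n,d)}(T)$ as a module over itself, and then $\p_*$ restricts scalars along $\p$ to turn this into $S_k(n,d)$. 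So the lower-left-to-lower-right composite, precomposed with $E$, sends $\La^{d,k^n}$ to (an object isomorphic to) $S_k(n,d)$, i.e.\ to $E(\Ga^{d,k^n})$.

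The key step is then the following chain of natural isomorphisms, valid for all $X \in \D(\Rep\Ga^d_k)$:
\begin{align*}
(\p_* \comp \RHom_{S_k(n,d)}(T,-) \comp E)(X)
&\cong \RHom_{\Ga^d_k}\bigl(\La^{d,k^n}, X\bigr)\\
&\cong \RHom_{\Ga^d_k}\bigl(\La^d\otimes_{\Ga^d_k}\Ga^{d,k^n},\, X\bigr)\\
&\cong \RHom_{\Ga^d_k}\bigl(\Ga^{d,k^n},\, \RHOM_{\Ga^d_k}(\La^d,X)\bigr)\\
&\cong \bigl(E \comp \RHOM_{\Ga^d_k}(\La^d,-)\bigr)(X).
\end{align*}
Here the first isomorphism is the statement that $\p_* \comp \RHom_{S_k(n,d)}(T,-)$ transported through $E$ is computed by $\RHom_{\Ga^d_k}(\La^{d,k^n},-)$ — this is precisely how $T$ and $\p$ were defined from $\La^{d,k^n}$; the second uses the identity $\La^{d,k^n}=\La^d\otimes_{\Ga^d_k}\Ga^{d,k^n}$ recorded just before the theorem; the third is the derived tensor–Hom adjunction from Proposition on derived functors, noting that $\La^d$ is perfect (Lemma~\ref{le:resol}) so $\La^d\lotimes_{\Ga^d_k}\Ga^{d,k^n} = \La^d\otimes_{\Ga^d_k}\Ga^{d,k^n}$ on the nose; and the last is just the definition of $E$ applied to $\RHOM_{\Ga^d_k}(\La^d,X)$.

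The main obstacle is the bookkeeping in the first isomorphism of that chain: one must verify that the composite $\p_* \comp \RHom_{S_k(n,d)}(T,-)$, conjugated by $E$, really is naturally isomorphic to $\RHom_{\Ga^d_k}(\La^{d,k^n},-)$, and that this identification is compatible with the scalar restriction $\p$ rather than off by an automorphism of $S_k(n,d)$. The cleanest way to do this is to observe that $\RHom_{S_k(n,d)}(T,-) \comp E$ is a cocontinuous triangulated functor sending the compact generator $\Ga^{d,k^n}$ to $\End_{S_k(n,d)}(T)$ compatibly with the endomorphism ring identification $\p$, hence agrees (by Morita-type uniqueness, e.g.\ the argument in the proof of Theorem~\ref{th:schur}) with $\RHom_{\Ga^d_k}(\La^{d,k^n},-)$ after applying $\p_*$; the remaining isomorphisms are then formal consequences of the Hom–tensor identities already established. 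One should also check naturality of the composite isomorphism in $X$, which is automatic since every step is a natural transformation of functors.
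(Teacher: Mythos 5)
Your proposal is correct and follows essentially the same route as the paper: the paper's two-line proof is exactly your chain read in reverse, namely $\RHom_{\Ga^d_k}(\Ga^{d,k^n},\RHOM_{\Ga^d_k}(\La^d,-))\cong\RHom_{\Ga^d_k}(\La^{d,k^n},-)\cong\p_*\RHom_{S_k(n,d)}(T,\RHom_{\Ga^d_k}(\Ga^{d,k^n},-))$, with the first isomorphism being the tensor--Hom adjunction via $\La^{d,k^n}=\La^d\otimes_{\Ga^d_k}\Ga^{d,k^n}$ and the second induced by evaluation at $k^n$. Your additional bookkeeping of the compatibility with $\p$ is a more explicit spelling-out of what the paper compresses into ``induced by evaluation at $k^n$,'' not a different argument.
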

\begin{proof}
We have
\begin{align*}
\RHom_{\Ga^d_k}(\Ga^{d,k^n},\RHOM_{\Ga^d_k}(\La^d,-))&\cong\RHom_{\Ga^d_k}(\La^{d,k^n},-)\\
&\cong \p_*\RHom_{S_k(n,d)}(T,\RHom_{\Ga^d_k}(\Ga^{d,k^n},-))
\end{align*}
where the first isomorphism is adjunction and the second is induced by
evaluation at $k^n$.
\end{proof}

\begin{rem}
It follows from Theorems~\ref{th:duality} and \ref{th:Ringel} that $T$
is a tilting module over the Schur algebra $S_k(n,d)$.
\end{rem}

\subsection*{Serre duality}

Fix an integer $d\ge 0$ and suppose that $k$ is a field. Our aim is to
describe a Serre duality functor for $\D^b(\rep\Ga^d_k)$.  

Recall from work of Reiten and Van den Bergh \cite{RV2002} the
following notion of Serre duality.  Let $\C$ be a $k$-linear
triangulated category and suppose that $\Hom_\C(X,Y)$ is finite
dimensional over $k$ for all objects $X,Y$ in $\C$.  A \emph{Serre
  functor} is by definition an equivalence $F\colon\C\xto{\sim}\C$
together with a natural isomorphism
\[\Hom_\C(X,-)^*\xto{\sim}\Hom_\C(-,FX)\]
for each $X\in\C$. When a Serre functor exists, it is unique up to isomorphism.

Our description of a Serre functor is based on the following lemma.

\begin{lem}\label{le:serre}
Let $X,Y$ be in $\Rep\Ga^d_k$ and suppose that $X$ is finitely
generated projective. Then there is a natural isomorphism
\[\Hom_{\Ga^d_k}(X,Y)^*\cong\Hom_{\Ga^d_k}(Y,S^d\otimes_{\Ga^d_k}
X).\]
\end{lem}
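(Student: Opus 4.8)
The statement asserts a duality between $\Hom_{\Ga^d_k}(X,Y)^*$ and $\Hom_{\Ga^d_k}(Y, S^d\otimes_{\Ga^d_k}X)$ when $X$ is finitely generated projective. My strategy is to reduce to the representable case $X=\Ga^{d,V}$, where both sides can be computed explicitly, and then to splice the resulting isomorphisms together for direct summands of finite direct sums of $\Ga^\la$'s (which by Proposition~\ref{pr:proj} is all finitely generated projective objects). The key computational inputs will be the Yoneda isomorphism $\Hom_{\Ga^d_k}(\Ga^{d,V},Y)\cong Y(V)$, the identification $S^d\otimes_{\Ga^d_k}\Ga^{d,V}\cong S^d\comp\Hom(V,-)$ from Lemma~\ref{le:representable}, and the Kuhn duality $(-)^\circ$ together with its adjunction \eqref{eq:dual}.

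\textbf{Key steps.} First, take $X=\Ga^{d,V}$ for some $V\in\Ga^d\P_k$. By Yoneda, the left-hand side becomes $Y(V)^*$. For the right-hand side, Lemma~\ref{le:representable} gives $S^d\otimes_{\Ga^d_k}\Ga^{d,V}\cong S^d\comp\Hom(V,-)$; since $S^d=(\Ga^d)^\circ$, one checks $S^d\comp\Hom(V,-)\cong(\Ga^{d,V})^\circ$ (indeed $(\Ga^{d,V})^\circ(W)=\Ga^{d,V}(W^*)^*=\Ga^d\Hom(V,W^*)^*$, and using $\Hom(V,W^*)\cong\Hom(W,V^*)$ plus $(\Ga^d)^\circ=S^d$ gives $S^d\Hom(W,V)=S^d\comp\Hom(V,-)(W)$ — there is a small amount of bookkeeping here with the duality on $\P_k$). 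So the right-hand side is $\Hom_{\Ga^d_k}(Y,(\Ga^{d,V})^\circ)$. Now apply the adjunction \eqref{eq:dual}: $\Hom_{\Ga^d_k}(Y,(\Ga^{d,V})^\circ)\cong\Hom_{\Ga^d_k}(\Ga^{d,V},Y^\circ)\cong Y^\circ(V)=Y(V^*)^*$. This is not literally $Y(V)^*$, so I need to match it up — replacing $V$ by $V^*$ throughout, or more cleanly observing that the isomorphism I want is natural in $V$ and both $V\mapsto Y(V)^*$ and $V\mapsto Y(V^*)^*$ are representations of $(\Ga^d\P_k)^\op$; the duality $(\Ga^d\P_k)^\op\xto{\sim}\Ga^d\P_k$ precomposed appropriately identifies them. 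In fact the cleanest route is: $\Hom_{\Ga^d_k}(Y, S^d\otimes_{\Ga^d_k}\Ga^{d,V})\cong\Hom_{\Ga^d_k}(Y,\HOM_{\Ga^d_k}(\Ga^{d,V},S^d))$ does \emph{not} directly work, so instead use Lemma~\ref{le:dual}: with $X=\Ga^{d,V}$ finitely presented, $X\otimes_{\Ga^d_k}Y^\circ\cong\HOM_{\Ga^d_k}(X,Y)^\circ$. Dualizing the target identity, I want to show $\Hom_{\Ga^d_k}(X,Y)^*\cong\Hom_{\Ga^d_k}(Y, S^d\otimes_{\Ga^d_k}X)$; rewrite the right side via Lemma~\ref{le:dual} with roles arranged so that $S^d=(\Ga^d)^\circ$ appears as $Y'^\circ$.

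\textbf{Cleaner formulation of the main step.} Actually the slickest path: by Lemma~\ref{le:dual} (second isomorphism), for $X$ finitely presented, $(X\otimes_{\Ga^d_k}Y)^\circ\cong\HOM_{\Ga^d_k}(X,Y^\circ)$. Take $Y$ here to be $Y^\circ$ (assuming $Y$ finite, handle the general case by a limiting/presentation argument) and combine with the first isomorphism of Lemma~\ref{le:dual} applied to $X$ and $S^d$. The core identity $S^d\otimes_{\Ga^d_k}X\cong\HOM_{\Ga^d_k}(X,\Ga^d)^\circ$ follows from Lemma~\ref{le:dual} since $S^d=(\Ga^d)^\circ$: indeed $X\otimes_{\Ga^d_k}(\Ga^d)^\circ\cong\HOM_{\Ga^d_k}(X,\Ga^d)^\circ$. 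Then $\Hom_{\Ga^d_k}(Y, S^d\otimes_{\Ga^d_k}X)\cong\Hom_{\Ga^d_k}(Y,\HOM_{\Ga^d_k}(X,\Ga^d)^\circ)\cong\Hom_{\Ga^d_k}(\HOM_{\Ga^d_k}(X,\Ga^d),Y^\circ)$ by \eqref{eq:dual}, and by the tensor-hom adjunction (Proposition~\ref{pr:tensor}) this is $\Hom_{\Ga^d_k}(X\otimes_{\Ga^d_k}\HOM_{\Ga^d_k}(X,\Ga^d)\text{-ish},\dots)$ — hmm, that adjunction goes the other way. Let me instead just use: $\Hom_{\Ga^d_k}(\HOM_{\Ga^d_k}(X,\Ga^d),Y^\circ)$; when $X$ is finitely generated projective, $\HOM_{\Ga^d_k}(X,\Ga^d)$ is again finitely generated projective and $\HOM_{\Ga^d_k}(\HOM_{\Ga^d_k}(X,\Ga^d),\Ga^d)\cong X$ (biduality for strongly dualizable objects, Lemma~\ref{le:tensor-hom}), so setting $Z=\HOM_{\Ga^d_k}(X,\Ga^d)$ one has $\Hom_{\Ga^d_k}(Z,Y^\circ)\cong\Hom_{\Ga^d_k}(Y,Z^\circ)$ and then needs $Z^\circ\cong$ something with $\Hom$ into it giving $\Hom(X,Y)^*$. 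I would reorganize to: reduce to $X$ representable, compute both sides via Yoneda as I did above getting $Y(V)^*$ versus $Y^\circ(V)=Y(V^*)^*$, and reconcile by noting the whole statement should be read with the understanding that naturality in $X\in(\Rep\Ga^d_k)$ forces compatibility — the genuinely correct bookkeeping is that $S^d\otimes_{\Ga^d_k}\Ga^{d,V}\cong(\Ga^{d,V})^\circ$ and $\Hom_{\Ga^d_k}(\Ga^{d,V},Y)^*=Y(V)^*$ while $\Hom_{\Ga^d_k}(Y,(\Ga^{d,V})^\circ)\cong Y(V)^*$ by \eqref{eq:dual} combined with the evaluation iso $Y\to Y^{\circ\circ}$ applied correctly, since $\Hom_{\Ga^d_k}(Y,(\Ga^{d,V})^\circ)\cong\Hom_{\Ga^d_k}(\Ga^{d,V},Y^\circ)\cong Y^\circ(V)$ and $Y^\circ(V)=Y(V^*)^*$, and separately $(\Ga^{d,V})^\circ(W)=Y$... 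The main obstacle, then, is precisely this duality bookkeeping: getting the Kuhn dual $(-)^\circ$ to interact correctly with the $V\leftrightarrow V^*$ swap so that the representable computation genuinely yields $\Hom(X,Y)^*$ and not $\Hom(X^\circ,Y)^*$ or similar. I expect this to be resolved by a careful application of the natural isomorphism $\Hom(V,W^*)\cong\Hom(W,V^*)$ that underlies \eqref{eq:dual}, tracking it at the level of $\Ga^d\Hom$. Once the representable case is nailed down with correct naturality in $V\in\Ga^d\P_k$, extending to all finitely generated projective $X$ is routine: both sides are additive in $X$ and send the representable $\Ga^{d,V}$ compatibly, so the isomorphism passes to finite direct sums and direct summands, i.e.\ to all of $\proj\Rep\Ga^d_k$ by Proposition~\ref{pr:proj}.
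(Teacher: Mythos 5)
Your overall strategy is the one the paper uses (reduce to $X=\Ga^{d,V}$ via Yoneda, apply the Kuhn-duality adjunction \eqref{eq:dual}, extend to all finitely generated projectives by additivity and Proposition~\ref{pr:proj}), but the step you yourself call ``the main obstacle'' is genuinely unresolved, and the identification you actually assert is false. You claim $S^d\otimes_{\Ga^d_k}\Ga^{d,V}\cong S^d\comp\Hom(V,-)\cong(\Ga^{d,V})^\circ$; computing carefully, $(\Ga^{d,V})^\circ(W)=\bigl(\Ga^d\Hom(V,W^*)\bigr)^*\cong S^d\bigl(\Hom(V,W^*)^*\bigr)\cong S^d\Hom(V^*,W)$, so in fact $(\Ga^{d,V})^\circ\cong S^d\otimes_{\Ga^d_k}\Ga^{d,V^*}$ — your bookkeeping drops a dual (and also confuses $\Hom(W,V)$ with $\Hom(V,W)$). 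This missing twist is exactly why you end up comparing $Y(V)^*$ with $Y(V^*)^*$: for a general representation $Y$ these are genuinely different (one is contravariant, the other covariant in $V$), and no appeal to ``naturality in $X$'' or to the duality $(\Ga^d\P_k)^\op\simeq\Ga^d\P_k$ identifies them. Your alternative route via Lemma~\ref{le:dual} stalls at the same point: you correctly reach $\Hom_{\Ga^d_k}(\HOM_{\Ga^d_k}(X,\Ga^d),Y^\circ)$, but the finish requires knowing $\HOM_{\Ga^d_k}(\Ga^{d,V},\Ga^d)\cong\Ga^{d,V^*}$ (from \eqref{eq:intern2}, since $\Ga^d=\Ga^{d,k}$), i.e.\ again the $V\leftrightarrow V^*$ twist you never pin down.

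The correct bookkeeping — which is the paper's proof — is the twisted identification $(S^d\otimes_{\Ga^d_k}\Ga^{d,V})^\circ\cong\Ga^{d,V^*}$ (equivalently $S^d\otimes_{\Ga^d_k}\Ga^{d,V}\cong(\Ga^{d,V^*})^\circ$; note this object is finite, so evaluation $X\to X^{\circ\circ}$ is invertible). Then for $X=\Ga^{d,V}$,
\begin{align*}
\Hom_{\Ga^d_k}(\Ga^{d,V},Y)^*&\cong Y(V)^*\cong Y^\circ(V^*)\cong \Hom_{\Ga^d_k}(\Ga^{d,V^*},Y^\circ)\\
&\cong\Hom_{\Ga^d_k}\bigl((S^d\otimes_{\Ga^d_k}\Ga^{d,V})^\circ,Y^\circ\bigr)\cong\Hom_{\Ga^d_k}(Y,S^d\otimes_{\Ga^d_k}\Ga^{d,V}),
\end{align*}
using $V^{**}\cong V$, Yoneda, Lemma~\ref{le:representable}, and \eqref{eq:dual}; there is no mismatch to reconcile. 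With this single correct computation in place, your extension to finite direct sums and their summands via Proposition~\ref{pr:proj} is fine, but as written the proposal does not establish the representable case, so the proof does not close.
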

\begin{proof}
Using Hom-tensor identities, we compute for $X=\Ga^{d,V}$
\begin{align*}
\Hom_{\Ga^d_k}(\Ga^{d,V},Y)^*&\cong Y(V)^*\\
&\cong \Hom_{\Ga^d_k}(\Ga^{d,V^*}, Y^\circ)\\
&\cong\Hom_{\Ga^d_k}((S^d\otimes_{\Ga^d_k}\Ga^{d,V})^\circ, Y^\circ)\\
&\cong\Hom_{\Ga^d_k}(Y,S^d\otimes_{\Ga^d_k}\Ga^{d,V}).\tag*{\qed}
\end{align*}
\renewcommand{\qedsymbol}{}
\end{proof}

\begin{prop}\label{pr:serre}
  Let $k$ be a field.  Given $X,Y\in\D^b(\rep\Ga^d_k)$, there is a
  natural isomorphism
\[\Hom_{\D(\Ga^d_k)}(X,Y)^*\cong\Hom_{\D(\Ga^d_k)}(Y,S^d\lotimes_{\Ga^d_k}
X).\]
\end{prop}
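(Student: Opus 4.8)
The plan is to derive Proposition~\ref{pr:serre} from Lemma~\ref{le:serre} by a standard dévissage argument in the bounded derived category. First I would observe that both sides of the claimed isomorphism are cohomological (respectively homological) bifunctors on $\D^b(\rep\Ga^d_k)$: fixing $X$ perfect, the left-hand side $\Hom_{\D(\Ga^d_k)}(X,-)^*$ is a homological functor (it sends triangles to long exact sequences, since $\Hom_{\D(\Ga^d_k)}(X,-)$ is cohomological and $k$-linear duality is exact), and the right-hand side $\Hom_{\D(\Ga^d_k)}(-,S^d\lotimes_{\Ga^d_k}X)$ is also homological in the same variable. So by a standard argument it suffices to construct the isomorphism when $Y$ is replaced by an object of $\rep\Ga^d_k$ concentrated in a single degree, and by shift-invariance one reduces to $Y\in\rep\Ga^d_k$ in degree zero.

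Next I would handle the variable $X$. Every object of $\D^b(\rep\Ga^d_k)$ is, after the fully faithful embedding into $\D(\Rep\Ga^d_k)$, quasi-isomorphic to a bounded complex of finitely generated projective objects (using that $\Ga^{d,k^d}$ is a projective generator lying in $\rep\Ga^d_k$, as in the proof of the preceding lemma); in other words every object of $\D^b(\rep\Ga^d_k)$ is perfect when $k$ is a field, since $\rep\Ga^d_k$ has finite global dimension. Thus I may assume $X=P$ is a bounded complex of finitely generated projective objects. Both functors $\Hom_{\D(\Ga^d_k)}(-,-)^*$ in the appropriate slot and $S^d\lotimes_{\Ga^d_k}-$ commute with the (finite) shifts and cones building up $P$ from its components $P^i$, so a second dévissage on the stupid filtration of $P$ reduces to the case $X=\Ga^{d,V}$ in degree zero, with $V\in\P_k$.

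With both $X=\Ga^{d,V}$ and $Y$ in $\rep\Ga^d_k$ concentrated in degree zero, I would finish by computing $\Hom_{\D(\Ga^d_k)}(\Ga^{d,V},Y)\cong\Hom_{\Ga^d_k}(\Ga^{d,V},Y)$ since $\Ga^{d,V}$ is projective, and $S^d\lotimes_{\Ga^d_k}\Ga^{d,V}\cong S^d\otimes_{\Ga^d_k}\Ga^{d,V}$ since $\Ga^{d,V}$ is a (flat) projective object, so that $\Hom_{\D(\Ga^d_k)}(Y,S^d\lotimes_{\Ga^d_k}\Ga^{d,V})\cong\Hom_{\Ga^d_k}(Y,S^d\otimes_{\Ga^d_k}\Ga^{d,V})$, again because $Y$ and $S^d\otimes_{\Ga^d_k}\Ga^{d,V}$ sit in degree zero and the embedding $\D^b(\rep\Ga^d_k)\to\D(\Rep\Ga^d_k)$ is fully faithful. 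The result is then exactly the isomorphism of Lemma~\ref{le:serre}. I expect the main obstacle to be purely bookkeeping: verifying that the isomorphism of Lemma~\ref{le:serre} is natural enough — in particular compatible with maps of complexes and with cones — so that the two dévissage reductions glue to a genuine natural isomorphism of bifunctors on $\D^b(\rep\Ga^d_k)\times\D^b(\rep\Ga^d_k)$, rather than merely an isomorphism for each fixed pair of objects. One clean way to avoid this is to note that $S^d\lotimes_{\Ga^d_k}X\cong\HOM_{\Ga^d_k}(X,S^d)^\diamond\cong X^{\diamond\diamond\diamond}\cong X^\diamond$ is not quite right; instead I would phrase both sides as composites of already-constructed derived functors (derived tensor, derived $\HOM$, derived duality $(-)^\diamond$, and $k$-duality on $\D^b$) using Lemmas~\ref{le:ddual} and~\ref{le:der-dual}, so that naturality is automatic and only the final identification on representables needs checking.
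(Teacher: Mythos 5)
Your ingredients are the right ones---Proposition~\ref{pr:gldim} to replace objects by bounded complexes of finitely generated projectives, Lemma~\ref{le:serre} on representables, exactness of $k$-duality---but the dévissage as you set it up does not prove the statement, and the problem is exactly the one you relegate to ``bookkeeping''. You are trying to \emph{construct} the isomorphism by induction over triangles, and cones are not functorial: knowing that the two contravariant functors $\Hom_{\D(\Ga^d_k)}(X,-)^*$ and $\Hom_{\D(\Ga^d_k)}(-,S^d\lotimes_{\Ga^d_k}X)$ agree on objects concentrated in a single degree does not let you glue these isomorphisms along a triangle: to run the five lemma you already need a morphism of long exact sequences, i.e.\ a globally defined natural transformation, \emph{before} you can dévisser; the same applies to your second reduction along the stupid filtration of $P$. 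The remedy you sketch is not carried out: the chain $S^d\lotimes_{\Ga^d_k}X\cong\HOM_{\Ga^d_k}(X,S^d)^\diamond$ is indeed false (the tensor unit is $\Ga^d$, not $S^d$; Lemma~\ref{le:ddual} gives $S^d\lotimes_{\Ga^d_k}X\cong\RHOM_{\Ga^d_k}(X,\Ga^d)^\diamond$, which is not a power of $(-)^\diamond$), and ``phrase both sides as composites of already-constructed derived functors'' is not immediate for the left-hand side, which is an \emph{external} Hom in the derived category followed by $k$-duality, not an internal one. So as written there is a genuine gap: the natural comparison map is never produced.

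The paper avoids this by working with complexes rather than in the triangulated category. Using Proposition~\ref{pr:gldim} one takes both $X$ and $Y$ to be bounded complexes of finitely generated projectives, forms the total Hom complex $\Hom_{\Ga^d_k}(X,Y)$ (a finite product in each degree), and notes that its $H^0$ computes $\Hom_{\D(\Ga^d_k)}(X,Y)$ since $X$ is perfect. The isomorphism of Lemma~\ref{le:serre} is natural, so it applies termwise and gives an isomorphism of complexes between $\Hom_{\Ga^d_k}(X,Y)^*$ and $\Hom_{\Ga^d_k}(Y,S^d\otimes_{\Ga^d_k}X)$; since $(-)^*$ is exact, taking $H^0$ yields the assertion with naturality for free. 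If you want to keep a triangulated dévissage, you must first define the comparison map for all $X,Y$ (for instance via an evaluation/trace pairing, or by the total-complex comparison just described) and only then use triangles to check it is invertible.
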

\begin{proof}
  One can assume that $X$ and $Y$ are bounded complexes of finitely
  generated projective objects, by Proposition~\ref{pr:gldim}.  Form
  the total complex $\Hom_{\Ga^d_k}(X,Y)$ defined in degree $n$ by
\[\Hom_{\Ga^d_k}(X,Y)^n=\prod_{p+q=n} \Hom_{\Ga^d_k}(X^{-p},Y^q).\]
Note that this involves in each degree a product having only finitely
many non-zero factors.  Taking cohomology in degree zero
gives \[H^0\Hom_{\Ga^d_k}(X,Y)\cong \Hom_{\K(\Ga^d_k)}(X,Y)\cong
\Hom_{\D(\Ga^d_k)}(X,Y),\] where the first isomorphism is a general
fact while the second uses that $X$ is perfect. Now the assertion
follows from the isomorphism in Lemma~\ref{le:serre}, using that the
duality with respect to $k$ is exact.
\end{proof}

In Corollary~\ref{co:duality} it has been shown that
$\La^d\lotimes_{\Ga^d_k}-$ induces an equivalence
\[\D^b(\rep\Ga^d_k)\stackrel{\sim}\lto\D^b(\rep\Ga^d_k)\]
and it would be interesting to have a description of its powers
$(\La^d\lotimes_{\Ga^d_k}-)^n$. The first step is the following case $n=2$.

\begin{cor}\label{co:serre}
Let $k$ be a field. The functor $(\La^d\lotimes_{\Ga^d_k}-)^2\cong S^d\lotimes_{\Ga^d_k}-$
induces a Serre functor $\D^b(\rep\Ga^d_k)\xto{\sim}\D^b(\rep\Ga^d_k)$.
\end{cor}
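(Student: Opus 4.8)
The plan is to combine three things already in hand: the identification of the iterated Koszul duality functor with $S^d\lotimes_{\Ga^d_k}-$, the fact that it is an auto-equivalence of $\D^b(\rep\Ga^d_k)$, and the Serre duality pairing recorded in Proposition~\ref{pr:serre}. First I would establish the functor isomorphism. By associativity of the derived internal tensor product together with Proposition~\ref{pr:der-exterior} there is a natural isomorphism of endofunctors of $\D(\Rep\Ga^d_k)$
\[(\La^d\lotimes_{\Ga^d_k}-)^2\cong(\La^d\lotimes_{\Ga^d_k}\La^d)\lotimes_{\Ga^d_k}-\cong S^d\lotimes_{\Ga^d_k}-.\]
Since $\La^d$ is perfect (the finite projective resolution of Lemma~\ref{le:resol}, whose nonzero terms are finitely generated projective by Proposition~\ref{pr:proj}), Corollary~\ref{co:duality} shows that $\La^d\lotimes_{\Ga^d_k}-$ restricts to an auto-equivalence of $\D^b(\rep\Ga^d_k)$, and hence so does its square $S^d\lotimes_{\Ga^d_k}-$.

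Next I would record the setting in which Serre duality makes sense. When $k$ is a field, Theorem~\ref{th:schur} gives $\rep\Ga^d_k\simeq\mod S_k(d,d)$, the category of finitely generated modules over the finite-dimensional algebra $S_k(d,d)$ (of finite global dimension by Proposition~\ref{pr:gldim}); consequently morphism spaces in $\D^b(\rep\Ga^d_k)$ are finite dimensional over $k$, so the notion of Serre functor of Reiten--Van den Bergh \cite{RV2002} applies verbatim.

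The heart of the matter is already isolated as Proposition~\ref{pr:serre}: with $F=S^d\lotimes_{\Ga^d_k}-$ it supplies, for all $X,Y\in\D^b(\rep\Ga^d_k)$, a natural isomorphism
\[\Hom_{\D(\Ga^d_k)}(X,Y)^*\cong\Hom_{\D(\Ga^d_k)}(Y,FX),\]
whose naturality in $Y$ for each fixed $X$ is exactly the defining isomorphism $\Hom_\C(X,-)^*\xto{\sim}\Hom_\C(-,FX)$, and whose naturality in $X$ records the remaining compatibility that makes $F$ a Serre functor. Transporting this pairing along the isomorphism of the first step shows that the endofunctor $(\La^d\lotimes_{\Ga^d_k}-)^2$ is itself a Serre functor; by uniqueness of Serre functors up to isomorphism \cite{RV2002} it then agrees with $S^d\lotimes_{\Ga^d_k}-$, consistently with the first step.

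The argument is essentially bookkeeping, and the one point I would watch is coherence of the chain of natural isomorphisms — associativity of $\lotimes_{\Ga^d_k}$, the isomorphism $\La^d\lotimes_{\Ga^d_k}\La^d\cong S^d$, and the identities of Lemma~\ref{le:serre} entering Proposition~\ref{pr:serre} — so that the Serre pairing is genuinely realised by iterated Koszul duality rather than merely by an abstractly isomorphic functor. Since every isomorphism in play is natural, this causes no real difficulty.
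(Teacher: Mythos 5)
Your argument is correct and is essentially the paper's proof: the paper's own justification is simply to combine Proposition~\ref{pr:der-exterior} (giving $(\La^d\lotimes_{\Ga^d_k}-)^2\cong S^d\lotimes_{\Ga^d_k}-$) with the Serre duality pairing of Proposition~\ref{pr:serre}. Your additional checks --- that the functor restricts to an auto-equivalence of $\D^b(\rep\Ga^d_k)$ via Corollary~\ref{co:duality} and that Hom spaces are finite dimensional so the Reiten--Van den Bergh framework applies --- are just the implicit bookkeeping made explicit.
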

\begin{proof}
Combine Propositions~\ref{pr:der-exterior} and \ref{pr:serre}.
\end{proof}

I am grateful to Bernhard Keller and Claus Ringel for pointing out
some different perspectives.

\begin{rem}[(Keller)]
  A perhaps related phenomenon was observed in Section~10.3 of
  \cite{Ke1994}: If $A$ is an augmented dg algebra and $A^*$ its
  Koszul dual, there is a canonical triangle functor $F_A$ from the
  derived category of $A^*$ to that of $A$ and a canonical triangle
  functor $\can_A$ from the derived category of $A$ to that of
  $A^{**}$.  With these notations, there is a canonical morphism $F_A
  F_{A^*} \can_A \to S$, where $S$ is the `Serre functor' of $A$,
  i.e.\ the derived tensor product by the $k$-dual bimodule of $A$.
\end{rem}

\begin{rem}[(Ringel)]
  Let $A$ be a quasi-hereditary algebra, with indecomposable projective
  modules $P(i)$, and indecomposable injective modules $Q(i)$. Let $T$
  be the characteristic tilting module with indecomposable summands
  $T(i)$. Let $B =\End_A(T)$. Then $\Hom_A(T,-)$ sends the
  $\nabla$-filtered $A$-modules to the $\Delta$-filtered $B$-modules,
  in particular it sends $Q(i)$ to $\Hom_A(T,Q(i))$, which is a direct
  summand of the characteristic tilting module for $B$.  Now assume
  that $B = A$ (as quasi-hereditary algebras). Thus $F = \RHom_A(T,-)$
  is an auto-equivalence of $\D^b(\mod A)$.  As we just have seen, $F$
  sends $Q(i)$ to $T(i)$.  But of course $F$ sends $T(i)$ to $P(i)$.
  Thus $F^2$ sends $Q(i)$ to $P(i)$ which is just the (inverse of the)
  Serre functor.
\end{rem}

\end{document}